\newtheorem{theorem}{Theorem}[section]
\newtheorem{lemma}[theorem]{Lemma}
\newtheorem{proposition}[theorem]{Proposition}
\newtheorem{corollary}[theorem]{Corollary}
\theoremstyle{definition}
\newtheorem{definition}[theorem]{Definition}
\newtheorem{construction}[theorem]{Construction}
\newtheorem{example}[theorem]{Example}
\newtheorem{notation}[theorem]{Notation}
\theoremstyle{remark}
\newtheorem{remark}[theorem]{Remark}
\numberwithin{equation}{section}
\newcommand{\bbN}{\mathbb{N}}
\newcommand{\bbZ}{\mathbb{Z}}
\newcommand{\h}{\mathrm{H}}
\newcommand{\hI}{\hat{\mathbf{I}}}
\newcommand{\bfI}{\mathbf{I}}
\newcommand{\bfJ}{\mathbf{J}}
\newcommand{\bfK}{\mathbf{K}}
\newcommand{\kk}{\Bbbk}
\newcommand{\op}{\mathrm{op}}
\newcommand{\Ay}[2]{{#1}\!\left\langle{#2} \right\rangle}
\newcommand{\del}{\partial}
\newcommand{\cls}[1]{\operatorname{cls}(#1)}
\newcommand{\gdeg}[1]{\mathscr{G}(#1)}
\newcommand{\xra}{\xrightarrow}
\newcommand{\id}{\operatorname{id}}
\newcommand{\Diff}{\operatorname{Diff}}
\newcommand{\Der}{\operatorname{Der}}
\newcommand{\Ind}{\operatorname{Ind}}
\newcommand{\Ext}{\operatorname{Ext}}
\newcommand{\Lie}{\operatorname{Lie}}
\newcommand{\Hom}{\operatorname{Hom}}
\newcommand{\Aut}{\operatorname{Aut}}
\newcommand{\supp}{\operatorname{supp}}
\newcommand{\HH}[2]{\operatorname{H}_{#1}(#2)}
\newcommand{\q}{\mathbf{q}}
\newcommand{\eps}{\upchi}
\newcommand{\e}{\eps}
\newcommand{\qwedge}{\bigwedge\!\!{}^\q\;}
\newcommand{\bbinom}{\genfrac{[}{]}{0pt}{}}
\newcommand{\cdgardp}{\operatorname{DGA}_{\e}^{\Gamma}(R)}
\DeclareSymbolFont{largesymbolsstix}{LS2}{stixex}{m}{n}
\DeclareMathDelimiter{\lbrbrak}{\mathopen}{largesymbolsstix}{"EE}{largesymbolsstix}{"14}
\DeclareMathDelimiter{\rbrbrak}{\mathclose}{largesymbolsstix}{"EF}{largesymbolsstix}{"15}
\begin{document}

\title[DG algebra over skew polynomial rings]{Differential graded algebra over quotients of skew polynomial rings by normal elements}


\author{}
\address{}
\curraddr{}
\email{}
\thanks{}

\author{Luigi Ferraro and W. Frank Moore}
\address{}
\curraddr{}
\email{}
\thanks{}

\subjclass[2010]{Primary }

\date{}

\dedicatory{}

\commby{}

\begin{abstract}
Differential graded algebra techniques have played a crucial role in the 
development of homological algebra, especially in the study of homological 
properties of commutative rings carried out by Serre, Tate, Gulliksen,
Avramov, and others.
In this article, we extend the construction of the Koszul complex and acyclic closure to a more 
general setting.  As an application of our constructions, we shine some light on the structure
of the Ext algebra of quotients of skew polynomial rings by ideals generated by normal elements.
As a consequence, we give a presentation of the
Ext algebra when the elements generating the ideal form a regular sequence, generalizing a theorem of
Bergh and Oppermann.  It follows that in this case the Ext algebra
is noetherian, providing a partial answer to a question of Kirkman, Kuzmanovich and Zhang.
\end{abstract}

\maketitle

\section*{Introduction}


Differential graded (DG) algebra techniques have played a crucial role in the 
development of homological algebra, especially in the study of homological 
properties of commutative rings carried out by Serre \cite{Ser}, Tate \cite{Tate}, 
Gulliksen \cite{Gul}, Avramov \cite{IFR}, and others.  
Central to much of this work is the notion of Koszul complex, or more 
generally the process of adjoining variables to remove unwanted homology classes.  
The Shafarevich complex, introduced by Golod-Shafarevich \cite{GS}
and further studied by Golod \cite{Gol}, generalizes this notion to
an arbitrary associative algebra over a field filtered by a semigroup.  Unfortunately,
the DG algebras obtained using the Shafarevich construction can be far from minimal, and therefore
do not convey the amount of information one is accustomed to in the commutative case.

The first main result of this paper shows that under some hypotheses on the cycle(s) in question
(see Definition \ref{def:killable} and Proposition \ref{prop:stillNormal}),
one may adjoin a set of exterior or divided powers variables (rather than free)
to kill cycles.  In particular, this extends the notion of Koszul complex
to a broad class of rings of interest in noncommutative algebraic geometry, in such
a way that the DG algebras one obtains are minimal.
When restricted to the case of a sequence of skew commuting variables in a skew polynomial
ring, our construction differs from (but is inspired by) previous ones (cf. \cite{AGP})
in that it carries a natural DG algebra structure.

In the commutative case, a useful feature of adjoining variables to kill homology classes is that
one may always repeat this process.  That is, if one has adjoined variables in degree $n+1$
in order to kill homology classes in degree $n$, the cycles of degree $n+1$
representing the homology classes of the resulting complex can then be killed by variables
adjoined in degree $n+2$, and so on.   Unfortunately, the technical hypotheses required by our construction
may not be satisfied in the extension, and hence we may not be able to continue
this procedure in general.  However, this difficulty is no longer present if one works in the context of
\emph{color commutative} rings.  As such,  the remainder of the paper is devoted to applications of our 
construction in this setting; we recall the definition.


Let $\kk$ be a field, $G$ an abelian group, and let $\e : G \times G \to \kk^*$ be a skew
bicharacter.  A $\kk$-algebra $R$ is called \emph{$\e$-color commutative} (or simply color
commutative if $\e$ is understood) provided $A$ admits a $G$-grading
$R = \bigoplus_{\sigma \in G} R_\sigma$ such that if $x \in R_\sigma$ and $y \in R_\tau$, then
$xy = \e(\sigma,\tau)yx$ (see Definition \ref{def:colorComm}).
While these perhaps seem exotic at first glance, such $\kk$-algebras are essentially just quotients
of skew polynomial rings by an ideal generated by normal elements (see Proposition 
\ref{prop:colorCommSkew}).  The benefit of incorporating the skew bicharacter $\e$ into the study of 
such algebras is that they help to contain the proliferation of constants that appear when commuting 
elements past one another.

Our goal is to develop the differential graded framework over a color commutative finitely
generated connected graded $\kk$-algebra $R$ in a way which parallels the theory in the case of a
commutative noetherian local ring.  Our treatment is inspired heavily by the lecture notes of 
Avramov \cite{IFR} as well as the text of Gulliksen-Levin \cite{GulLev}.

Section \ref{sec:CDGA} is devoted to developing the required machinery of color DG algebras needed 
in later sections.  In Section \ref{sec:AcyClos} we recall the definition of color derivations (see 
\cite{Feld}) and use them to prove that an acyclic closure (see Definition \ref{def:acyclicClosure}) of $\kk$ 
over $R$ is minimal, generalizing a fundamental result of Gulliksen \cite{Gul}.  In particular, when
$R = Q/(f_1,\dots,f_c)$ for $Q$ a skew polynomial ring and $(f_1,\dots,f_c)$ a regular sequence of 
normal elements (we call such an algebra a \emph{skew complete intersection}), an acyclic closure may be 
obtained by adjoining variables only in homological degree one and two, in a manner analagous to 
that of the Tate resolution in the commutative case.  As a consequence we obtain a rational expression
for the Poincar\'e series of $\kk$ over $R$, see Corollary \ref{cor:PoincSeries}.

In Section \ref{sec:CDGADP} we introduce the category of color commutative
DG algebras with divided powers, and prove that an acyclic closure of $\kk$ over a noetherian connected graded color
$\kk$-algebra $R$ is unique up to isomorphism, generalizing a result of Gulliksen-Levin \cite{GulLev}.
Given that the acyclic closure $\Ay{R}{Y}$ of $\kk$ over $R$ is unique, it is natural to consider
the $R$-module of color derivations from $\Ay{R}{Y}$ to itself as an invariant of $R$.  Section 7 is devoted
to proving that it is a \emph{color DG Lie $R$-algebra}, which is a natural generalization of a DG Lie algebra
that incorporates the skew bicharacter into the skew symmetry and Jacobi identities \cite{Scheu} (see 
Definition \ref{def:CLA} for details).  Its homology is therefore a 
graded color Lie algebra which we call the \emph{homotopy color Lie algebra} of $R$, denoted by 
$\pi(R)$.  This object carries several gradings, and the dimension of the components record the 
number of variables adjoined to the acyclic closure in each respective degree, just as in the 
commutative case.

In Section \ref{sec:extAlgebra}, we generalize \cite[Theorem 3]{Sjo} and show that the Ext algebra
$\Ext_R(\kk,\kk)$ (where $\kk$ is viewed as a \emph{right} $R$-module, see
\ref{rem:leftExtIsRightExtOp}) is the universal enveloping algebra of $\pi(R)$,
proving a version of the classical Poincar\'e-Birkhoff-Witt theorem for $U(\pi(R))$
along the way.  As a corollary, we obtain that $\Ext_R(\kk,\kk)$ is a
\emph{graded color Hopf algebra}, which is a generalization of a graded Hopf algebra that
incorporates the skew bicharacter $\e$ into the bialgebra structure.

In Section \ref{sec:pi}, we compute the Lie bracket of elements in cohomological degree one in the homotopy
color Lie algebra, generalizing a result of Sj\"odin \cite[Theorem 4]{Sjo}.  When $R$ is a skew 
complete intersection the homotopy color Lie algebra is concentrated in cohomological degree one
and two and thus the Lie bracket computations provide a presentation of $\Ext_R(\kk,\kk)$ as a 
$\kk$-algebra.  We provide this presentation in Section \ref{sec:skewCIs}; our presentation extends 
the one given by Bergh-Opperman in \cite{BerOpp} which was obtained using other methods.
This presentation also shows that the Ext algebra of a skew complete intersection is noetherian,
giving a partial answer to a question of Kirkman-Kuzmanovich-Zhang in \cite{NnCommCI}.

\section{Background}

Let $\kk$ be a field.  Until further notice, all $\kk$-algebras in this paper 
are assumed to be unital and associative, and unadorned tensor products
are assumed to be defined over $\kk$.

\begin{definition} \label{def:gddAlg}
A $\kk$-algebra $R$ is \emph{graded} if $R = \bigoplus_{n \in \bbZ} R_n$,
where each $R_n$ is a $\kk$-module and $R_mR_n \subseteq R_{m+n}$.  A graded 
algebra is \emph{connected} if $R_0 = \kk$ and $R_i = 0$ for $i < 0$.
A graded $\kk$-algebra $R$ is \emph{bigraded} if each component $R_n$ has a further vector
space decomposition $R_n = \bigoplus_{m \in \bbZ} R_{m,n}$ such that
$R_{m,n}R_{k,l} \subseteq R_{m+k,n+l}$. In particular, this implies that
$R = \bigoplus_{m,n \in \bbZ} R_{m,n}$.
\end{definition}

In this paper, all objects will be bigraded.  We will call the first grading the homological
grading, and the second the internal grading.  For example, a connected graded algebra
$R$ may be considered a bigraded algebra by concentrating it in homological degree zero.
The homological grading is the one relevant to many of the constructions that follow in this 
paper, but we will also need the internal grading for arguments that involve the minimality
of certain constructions.  We will often suppress the internal grading from our notation,
however, see Remark \ref{rem:threeGradings2}.

\begin{definition} \label{def:bihom}
Let $R$ be as in Definition \ref{def:gddAlg}. We say an element $x\in R$ is \emph{homogeneous} of (internal) degree $n$, and write $\deg x=n$, if $x\in R_n$. We say an element $x$ is \emph{bihomogeneous}
of (internal) degree $n$ and homological degree $m$, and write $\deg x=n$ and $|x|=m$, if $x\in R_{m,n}$.
\end{definition}

\begin{definition} \label{def:dga}
Let $R$ be a graded $\kk$-algebra.  A \emph{differential graded (DG) $R$-algebra} $A$
is a bigraded unital associative $\kk$-algebra with $R \subseteq A_0$
equipped with a graded $R$-linear differential $\del$ of homological degree $-1$ such that
$\del^2 = 0$, and such that the Leibniz rule holds:
$$\del(ab) = \del(a)b + (-1)^{|a|}a\del(b).$$
We denote the underlying $R$-algebra of $A$ as $A^\natural$.
Note that every graded $\kk$-algebra may be considered a DG algebra with trivial 
differential and homological grading.  One may also consider
DG modules over $A$, cf. \cite{IFR}.
\end{definition}

The Leibniz rule shows that the cycles $Z(A)$ form a graded $R$-subalgebra of 
$A$, and that the boundaries $B(A)$ are a two-sided ideal in $Z(A)$. 
Therefore $H(A) = Z(A)/B(A)$ is a graded $H_0(A)$-algebra.

\section{The process of killing a graded normal cycle} \label{sec:killCycle}

In a manner similar to the commutative case,  we would like to adjoin an exterior or
divided powers variable to a DG algebra in order to
kill a cycle in homology.  It turns out that the process of killing a
\emph{graded normal cycle} (whose definition follows) is quite similar to the 
commutative case.

\begin{definition}
Let $A$ be a bigraded $\kk$-algebra.  A bihomogeneous
element $z \in A$ is said to be \emph{graded normal} if
there exists a bigraded automorphism $\sigma$ of $A$ such that for
all bihomogeneous elements $y \in A$, one has
$zy = (-1)^{|y||z|}\sigma(y)z$ and $\sigma(z) = z$.
In such a case, we call such an automorphism $\sigma$ a
\emph{normalizing automorphism} of $z$.  
\end{definition}

Note that `graded normal' agrees with the usual definition of normal, except
that a normalizing automorphism associated to a graded normal element may 
differ from the usual one by a sign when the normal element is of odd 
homological degree.

\begin{definition} \label{def:killable}
Let $A$ be a DG algebra.  A cycle $z$ is called \emph{killable} if
it is a graded normal cycle whose normalizing automorphism
is a chain map.  If the homological degree of $z$ is odd, we assume in 
addition that $z^2 = 0$.  
\end{definition}

In order to adjoin a variable, we first must define the underlying algebra of
the extension.  To do this, we use twisted tensor products.

\begin{definition}
Let $A$ and $B$ be $\kk$-algebras with multiplication maps $\mu_A$ and $\mu_B$, 
respectively.  A $\kk$-linear homomorphism $\tau:B \otimes A \rightarrow A \otimes B$
is a \emph{twisting map} provided $\tau(b \otimes 1_A)=1_A \otimes b$ and 
$\tau(1_B \otimes a) = a \otimes 1_B$ for all $a \in A$, $b \in B$.
A multiplication on $A \otimes B$ is then given
by $\mu_\tau := (\mu_A \otimes \mu_B) \circ (\id_A \otimes \tau \otimes \id_B)$.
By \cite[Proposition 2.3]{CSV}, $\mu_\tau$ is associative if and only if 
$$\tau \circ (\mu_B \otimes \mu_A) 
	= \mu_\tau \circ (\tau \otimes \tau) \circ (\id_B \otimes \tau \otimes \id_A)$$
as maps $B \otimes B \otimes A \otimes A \rightarrow A \otimes B$.
The pair $(A \otimes B,\mu_\tau)$ is
a \emph{twisted tensor product} of $A$ and $B$, 
denoted by $A \otimes^\tau B$.
\end{definition}

We first treat the case of killing an even degree cycle.

\begin{construction} \label{cons:adjVarOdd}
Let $A$ be a DG $\kk$-algebra, and let $z$ be a killable cycle with
$\sigma$ a normalizing automorphism of $z$.

We construct a DG $\kk$-algebra $\Ay{A}{y}$ by setting
$\Ay{A}{y}^\natural = A \otimes^\tau \Ay{\kk}{y}$,
where $\Ay{\kk}{y}$ is the exterior algebra on a variable $y$ in homological 
degree $|z|+1$, $\tau$ is given by
$$\tau( (b + cy) \otimes a ) = a \otimes b + (-1)^{|a|} c\sigma(a) \otimes y,\quad a\in A, b,c\in\kk,$$
and $a$ is bihomogeneous.
We remark that in $\Ay{A}{y}$, one has $ya = (-1)^{|a|}\sigma(a)y$ for
all bihomogeneous $a$.  Note that this makes $\Ay{A}{y}$ a free left (and right)
$A$-module.  The differential of $\Ay{A}{y}$ is given by
$$\del(a_1 + a_2y) = \del(a_1) + \del(a_2)y + (-1)^{|a_2|}a_2z.$$
\end{construction}

\begin{proposition} \label{prop:adjVarOddDGA}
Construction \ref{cons:adjVarOdd} gives $\Ay{A}{y}$ the structure of a $DG$
$\kk$-algebra.
\end{proposition}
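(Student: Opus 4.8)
The plan is to verify that the data specified in Construction \ref{cons:adjVarOdd} actually assemble into a DG algebra, which amounts to checking three things: that $\mu_\tau$ is an associative multiplication, that the stated differential $\del$ is well-defined, squares to zero, and is $A$-linear, and that the Leibniz rule holds. First I would confirm that $\tau$ is a genuine twisting map by checking the unit conditions $\tau(y^0 \otimes 1_A) = 1_A \otimes y^0$ and $\tau(1_{\kk\langle y\rangle} \otimes a) = a \otimes 1$, which are immediate from the defining formula since the exterior algebra $\Ay{\kk}{y}$ has $\kk$-basis $\{1,y\}$. The associativity of $\mu_\tau$ I would establish by verifying the pentagon-type identity from \cite[Proposition 2.3]{CSV}; since $\Ay{\kk}{y}$ is spanned by $1$ and $y$ with $y^2 = 0$, the only nontrivial instance involves commuting $y$ past products in $A$, and here the fact that $\sigma$ is an \emph{algebra} automorphism (so $\sigma(a_1 a_2) = \sigma(a_1)\sigma(a_2)$) is exactly what makes the relation $ya = (-1)^{|a|}\sigma(a)y$ compatible with multiplication in $A$.

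Next I would turn to the differential. The formula $\del(a_1 + a_2 y) = \del(a_1) + \del(a_2)y + (-1)^{|a_2|}a_2 z$ is manifestly well-defined on the free $A$-module $\Ay{A}{y}$ with basis $\{1,y\}$, so the content is in checking $\del^2 = 0$ and the Leibniz rule. For $\del^2$, applying $\del$ twice to $a_2 y$ produces terms involving $\del^2(a_2) = 0$, a cross term of the form $\del(a_2) z \pm \del(a_2) z$ that cancels by the sign bookkeeping, and the term $a_2 \del(z)$; this last term vanishes precisely because $z$ is a \emph{cycle}, so $\del(z) = 0$. This is where the hypothesis that $z$ is killable (in particular a cycle) gets used. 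I would write out the computation on a general element $a_1 + a_2 y$ carefully to confirm all signs cancel.

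The Leibniz rule is the step I expect to be the main obstacle, because it is where the twisted multiplication, the normalizing automorphism, and the differential all interact and the signs are most delicate. The plan is to verify $\del(uv) = \del(u)v + (-1)^{|u|}u\,\del(v)$ on products of basis elements, i.e. on elements of the forms $a$, $ay$, and their products, reducing everything via $ya = (-1)^{|a|}\sigma(a)y$ and $y^2 = 0$. The crucial points will be (i) that $\sigma$ is a chain map, so that $\del$ commutes with $\sigma$ and the differential of $\sigma(a)y$ can be re-expressed cleanly, and (ii) that $\sigma(z) = z$, which ensures the new boundary term $a_2 z$ behaves correctly when multiplied on either side. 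I would organize the check by cases according to whether each factor lies in $A$ or in $Ay$, using the already-established associativity to reduce to these generators; the normality conditions $zy = (-1)^{|y||z|}\sigma(y)z$ and $\sigma(z)=z$ guarantee the term contributed by the adjoined variable is consistent with the Leibniz sign $(-1)^{|u|}$. Once all four cases agree, the proposition follows.
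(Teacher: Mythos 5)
Your proposal is correct and follows essentially the same route as the paper: a direct verification that $\del^2 = 0$ (which, as you note, uses that $z$ is a cycle) and that the Leibniz rule holds on elements $a_1 + a_2y$, with the graded normality of $z$, the chain-map property of $\sigma$, and $\sigma(z) = z$ doing exactly the work you identify. The only difference is that you also check the unit conditions and associativity of $\mu_\tau$ explicitly; the paper folds those into Construction \ref{cons:adjVarOdd} and the twisted tensor product formalism, and its proof verifies only $\del^2 = 0$ and the Leibniz rule, so your version is slightly more complete but not a different argument.
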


\begin{proof}
First, we check that $\del^2 = 0$.  Indeed, for $a = a_1 + a_2y$ bihomogeneous of homological degree $|a|$,
one has
\begin{eqnarray*}
\del(\del(a_1 + a_2y)) & = & \del(\del(a_1) + \del(a_2)y + (-1)^{|a_2|}a_2z) \\
                   & = & (-1)^{|a_2|}\del(a_2)z + (-1)^{|a_2|-1}\del(a_2)z = 0.
\end{eqnarray*}
Next, we verify the Leibniz rule.  Let $a = (a_1 + a_2y)$ and $b = (b_1 + b_2y)$ be bihomogeneous elements of $\Ay{A}{y}$.
Note that since $y$ is of odd degree, $|a_2|$ and $|a|$ have opposite parity, so that $(-1)^{|a_2|} = (-1)^{|a|-1}$.
Hence
\begin{eqnarray*}
\del(a_1 + a_2y)(b_1 + b_2y)
   & = & (\del(a_1) + \del(a_2)y + (-1)^{|a|-1}a_2z)(b_1 + b_2y) \\
   & = & \del(a_1)b_1 + \del(a_1)b_2y + (-1)^{|b|}\del(a_2)\sigma(b_1)y + \\
   &   & (-1)^{|a|-1}a_2zb_1 + (-1)^{|a|-1}a_2zb_2y.
\end{eqnarray*}
Similarly, one has
\begin{eqnarray*}
(a_1 + a_2y)\del(b_1 + b_2y)
   & = & (a_1 + a_2y)(\del(b_1) + \del(b_2)y + (-1)^{|b|-1}b_2z) \\
   & = & a_1\del(b_1) + a_1\del(b_2)y + (-1)^{|b|-1}a_1b_2z + \\
   &   & (-1)^{|b|-1}a_2\sigma(\del(b_1))y + a_2\sigma(b_2)zy.
\end{eqnarray*}
Finally, we consider $\del$ of the product $ab$:
\begin{eqnarray*}
\del((a_1+a_2y)(b_1+b_2y)) & = & \del(a_1b_1 + (a_1b_2 + (-1)^{|b|}a_2\sigma(b_1))y) \\
   & = & \del(a_1)b_1 + (-1)^{|b|}a_1\del(b) + \del(a_1b_2 + (-1)^{|b|}a_2\sigma(b_1))y + \\
   &   & (-1)^{|a|+|b|-1}(a_1b_2 + (-1)^{|b|}a_2\sigma(b_1))z \\
   & = & \del(a_1)b_1 + (-1)^{|b|}a_1\del(b) + \del(a_1)b_2y + (-1)^{|a|}a_1\del(b_2)y + \\
   &   & (-1)^{|b|}\del(a_2)\sigma(b_1)y + (-1)^{|a|+|b|-1}a_2\del(\sigma(b_1))y + \\
   &   & (-1)^{|a|+|b|-1}a_1b_2z + (-1)^{|a|-1}a_2\sigma(b_1)z.
\end{eqnarray*}
That $\del$ satisfies the Leibniz rule now follows from the fact that $\sigma$ is a chain map
and that $z$ is graded normal.
\end{proof}

One also has a similar construction when $z$ is a graded normal cycle of odd
degree.

\begin{construction} \label{cons:adjVarEven}
Consider the same setup as in Construction \ref{cons:adjVarOdd}, except 
that $z$ is killable of odd degree.

In this case, let $\Ay{A}{y}^\natural = A \otimes^\tau \Ay{\kk}{y}$,
where $\Ay{\kk}{y}$ is the divided powers algebra on a variable $y$
in degree $|z|+1$, and $\tau$ is given by
$$\tau( \sum_i c_iy^{(i)} \otimes a) = \sum_i c_i\sigma^i(a) \otimes y^{(i)}.$$
Recall that the divided powers algebra $\Ay{\kk}{y}$ has $\kk$-basis the
set of all ``divided powers'' $y^{(i)}$ with multiplication given by $y^{(i)}y^{(j)} = \binom{i+j}{i}y^{(i+j)}$, for $i,j$ nonnegative integers.
As is customary, we let $y^{(0)} = 1$.  We remark that in $\Ay{A}{y}$, one has
$y^{(i)}a = \sigma^i(a)y^{(i)}$ for all bihomogeneous $a$.  Note that again,
$\Ay{A}{y}$ is a free left (and right) $A$-module.
The differential of $\Ay{A}{y}$ is given by
$$\del(a_0 + \sum_{i \geq 1} a_i y^{(i)}) =
  \del(a_0) + \sum_{i \geq 1}(\del(a_i)y^{(i)} + (-1)^{|a_i|}a_izy^{(i-1)}).$$
\end{construction}

A computation similar to the proof of Proposition \ref{prop:adjVarOddDGA} shows
that the following proposition holds.

\begin{proposition}
Construction \ref{cons:adjVarEven} gives $\Ay{A}{y}$ the structure of a
$DG$ $\kk$-algebra.
\end{proposition}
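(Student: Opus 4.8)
The plan is to mirror the proof of Proposition~\ref{prop:adjVarOddDGA}, first recording that the underlying twisted tensor product $A \otimes^\tau \Ay{\kk}{y}$ is an associative $\kk$-algebra and then verifying $\del^2 = 0$ and the Leibniz rule. Associativity of $\mu_\tau$ reduces, via the criterion recalled from \cite{CSV}, to $\sigma$ being an algebra automorphism together with $\sigma^i\sigma^j = \sigma^{i+j}$ and the divided-power multiplication $y^{(i)}y^{(j)} = \binom{i+j}{i}y^{(i+j)}$. The only relations I need to carry forward are those already noted in Construction~\ref{cons:adjVarEven}: in $\Ay{A}{y}$ one has $y^{(i)}a = \sigma^i(a)y^{(i)}$ for bihomogeneous $a$, and in particular $y^{(i)}z = zy^{(i)}$ since $\sigma(z) = z$. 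I will also use repeatedly that $|y| = |z|+1$ is even, so that $|ay^{(i)}|$ and $|a|$ have the same parity.

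For $\del^2 = 0$, by left $A$-linearity it suffices to treat a single term $ay^{(i)}$. Applying $\del$ twice and discarding the terms that vanish by $\del^2 = 0$ on $A$, the two surviving contributions carry $y^{(i-1)}$: one is $(-1)^{|a|-1}\del(a)zy^{(i-1)}$, coming from $\del$ acting on the $y^{(i)}$ factor of $\del(a)y^{(i)}$, and the other is $(-1)^{|a|}\del(az)y^{(i-1)}$. Since $z$ is a cycle, $\del(az) = \del(a)z$, so these cancel. The remaining potential term, proportional to $az^2y^{(i-2)}$ and produced when $\del$ acts on the divided-power factor $y^{(i-1)}$ to generate a second $z$, vanishes because $z^2 = 0$; this is exactly the extra hypothesis imposed on odd killable cycles in Definition~\ref{def:killable}. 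The case $i = 1$ is identical upon reading $y^{(0)} = 1$.

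For the Leibniz rule, by bilinearity it is enough to check $\del(uv) = \del(u)v + (-1)^{|u|}u\,\del(v)$ on basis elements $u = ay^{(i)}$ and $v = by^{(j)}$. On the left-hand side one has $uv = \binom{i+j}{i}a\sigma^i(b)y^{(i+j)}$, and applying the differential produces a $y^{(i+j)}$-term from $\del(a\sigma^i(b)) = \del(a)\sigma^i(b) + (-1)^{|a|}a\sigma^i(\del b)$, using that $\sigma$ is a chain map, together with a $y^{(i+j-1)}$-term with coefficient $\binom{i+j}{i}$. On the right-hand side one expands $\del(ay^{(i)})by^{(j)} + (-1)^{|a|}ay^{(i)}\del(by^{(j)})$, moving every divided power to the right via $y^{(i)}b = \sigma^i(b)y^{(i)}$ and $y^{(i)}z = zy^{(i)}$, and moving the internal $z$ past $\sigma^{i-1}(b)$ by graded normality, $z\sigma^{i-1}(b) = (-1)^{|b|}\sigma^i(b)z$ (here $|z|$ is odd). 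The $y^{(i+j)}$-terms match the left-hand side directly, while the two $y^{(i+j-1)}$-terms both carry $a\sigma^i(b)zy^{(i+j-1)}$ with coefficients $\binom{i+j-1}{i-1}$ and $\binom{i+j-1}{i}$, which sum to $\binom{i+j}{i}$ by Pascal's rule. The degenerate cases $i = 0$ or $j = 0$ are handled separately and reduce to the Leibniz rule already available on $A$.

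I expect the main obstacle to be the bookkeeping of the binomial coefficients. Unlike the exterior situation of Proposition~\ref{prop:adjVarOddDGA}, here every product of divided powers introduces a factor $\binom{i+j}{i}$, and the entire verification of Leibniz turns on the single combinatorial identity $\binom{i+j-1}{i-1} + \binom{i+j-1}{i} = \binom{i+j}{i}$. The conceptual inputs are otherwise identical to the even case---$\sigma$ being a chain map, $\sigma(z) = z$, and the graded normality of $z$---so once the divided powers and their coefficients are organized correctly, the computation runs exactly in parallel with the proof of Proposition~\ref{prop:adjVarOddDGA}.
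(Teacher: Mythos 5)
Your proof is correct and is precisely the computation the paper has in mind: the paper omits the argument, stating only that it is ``similar to the proof of Proposition~\ref{prop:adjVarOddDGA},'' and your verification carries out exactly that parallel check of $\del^2 = 0$ and the Leibniz rule. All the key ingredients are correctly identified and deployed --- the hypothesis $z^2 = 0$ imposed on odd killable cycles in Definition~\ref{def:killable}, the facts that $\sigma$ is a chain map with $\sigma(z) = z$, the graded normality relation $z\sigma^{i-1}(b) = (-1)^{|b|}\sigma^i(b)z$, and Pascal's rule $\binom{i+j-1}{i-1} + \binom{i+j-1}{i} = \binom{i+j}{i}$ for the divided-power coefficients.
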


If we wish to include the differential of $y$ as part of the notation for $\Ay{A}{y}$,
we write $\Ay{A}{y\mid\del(y) = z}$.

\begin{definition}
We call an extension $A \subseteq B$ of DG algebras obtained by
successive application of either Construction \ref{cons:adjVarOdd} or 
\ref{cons:adjVarEven} is called a \emph{semi-free extension} of $A$.
We denote such an extension by $\Ay{A}{Y}$ where $Y$ is the set
of variables we have adjoined.

For such an extension and a total order $<$
on the exterior and divided powers variables $Y$, a monomial in the variables $Y$
is said to be in \emph{normal form with respect to $<$} if the variables appearing in it
are written in increasing order with respect to $<$.  If the ordering
on $Y$ is understood, we say that such a monomial in $Y$ is in normal form.
\end{definition}

Now suppose one has two killable cycles $z_1$ and $z_2$ that we wish to remove
in homology.  In order to iterate the above procedure, we must ensure that $z_2$
remains graded normal in the semi-free extension used to kill the cycle
$z_1$.  This is achieved in the next proposition under a skew commuting
hypothesis.

\begin{proposition}
\label{prop:stillNormal}
Let $A$ be a DG algebra and let $z_1,z_2$ be killable cycles
with normalizing automorphisms  $\sigma_1,\sigma_2$ respectively.
If $\sigma_1\sigma_2=\sigma_2\sigma_1$ and $z_1,z_2$ skew commute,
then $z_2$ is a killable cycle in $\Ay{A}{y\mid \partial (y)=z_1}$.
\end{proposition}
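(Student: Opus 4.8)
The plan is to construct an explicit candidate normalizing automorphism of $z_2$ on the extension $B:=\Ay{A}{y\mid\partial(y)=z_1}$ and then to verify each clause of Definition \ref{def:killable}. Two of those clauses are immediate: since the differential of $B$ restricts to that of $A$ on the subalgebra $A$, and $z_2\in A$ is a cycle there, $z_2$ remains a cycle in $B$; and if $|z_2|$ is odd, the relation $z_2^2=0$ is inherited from $A\subseteq B$. Thus the real content is to exhibit a bigraded automorphism $\widetilde{\sigma}_2$ of $B$ normalizing $z_2$ which is also a chain map.

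Let $\mu\in\kk^\ast$ be the scalar for which $z_1z_2=(-1)^{|z_1||z_2|}\mu\,z_2z_1$, which is what the skew-commuting hypothesis provides. Comparing this with the normality relations $z_1z_2=(-1)^{|z_1||z_2|}\sigma_1(z_2)z_1$ and $z_2z_1=(-1)^{|z_1||z_2|}\sigma_2(z_1)z_2$ forces the eigenvector relations
\[
\sigma_1(z_2)=\mu\,z_2,\qquad \sigma_2(z_1)=\mu^{-1}z_1.
\]
I would then define $\widetilde{\sigma}_2\colon B\to B$ to be the algebra map that restricts to $\sigma_2$ on $A$ and sends $y\mapsto \mu^{-1}y$ (hence $y^{(i)}\mapsto \mu^{-i}y^{(i)}$ in the divided power case of Construction \ref{cons:adjVarEven}). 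The scalar $\mu^{-1}$ is forced: commuting $z_2$ past $y$ by means of the relation of Construction \ref{cons:adjVarOdd} (resp. \ref{cons:adjVarEven}) gives $z_2y=(-1)^{|z_2|}\mu^{-1}yz_2$ (resp. $z_2y=\mu^{-1}yz_2$), and since $|z_1|$ is even in the first construction and odd in the second, the Koszul signs $(-1)^{|y||z_2|}$ collapse so that $z_2y=(-1)^{|y||z_2|}(\mu^{-1}y)z_2$ in both cases.

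Next I would check that $\widetilde{\sigma}_2$ is a well-defined bigraded automorphism of $B^\natural=A\otimes^\tau\Ay{\kk}{y}$. As $\sigma_2$ is bigraded and $y\mapsto\mu^{-1}y$ preserves bidegree, the only thing to confirm is compatibility with the twist relation $ya=(-1)^{|a|}\sigma_1(a)y$ (resp. $y^{(i)}a=\sigma_1^{i}(a)y^{(i)}$); applying $\widetilde{\sigma}_2$ to both sides and cancelling the scalar reduces this to the identity $\sigma_1\sigma_2=\sigma_2\sigma_1$, which is precisely our hypothesis. In the divided power case one must also respect $y^{(i)}y^{(j)}=\binom{i+j}{i}y^{(i+j)}$, which is immediate from $\mu^{-i}\mu^{-j}=\mu^{-(i+j)}$. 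Graded normality of $z_2$ against all of $B$ then follows formally: the relation $z_2w=(-1)^{|w||z_2|}\widetilde{\sigma}_2(w)z_2$ is multiplicative in $w$, so it suffices to check it on the generators $A$ (where it holds since $z_2$ is normal in $A$) and on $y$ (by the computation above), while $\widetilde{\sigma}_2(z_2)=\sigma_2(z_2)=z_2$.

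The last clause, that $\widetilde{\sigma}_2$ is a chain map, is where I expect the crux to lie. On $A$ this is the hypothesis that $\sigma_2$ is a chain map, so only the generator $y$ must be verified: here $\partial\widetilde{\sigma}_2(y)=\mu^{-1}\partial(y)=\mu^{-1}z_1$, whereas $\widetilde{\sigma}_2(\partial y)=\widetilde{\sigma}_2(z_1)=\sigma_2(z_1)$, and these agree exactly because $\sigma_2(z_1)=\mu^{-1}z_1$. The main obstacle is that the single scalar $\mu^{-1}$ must simultaneously make $\widetilde{\sigma}_2$ normalize $z_2$ against $y$ (a demand coming from the action of $\sigma_1$ on $z_2$) and commute with $\partial$ on $y$ (a demand coming from the action of $\sigma_2$ on $z_1$). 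These are a priori independent constraints, and it is precisely the skew-commuting hypothesis—tying $\sigma_1(z_2)$ and $\sigma_2(z_1)$ to the reciprocal pair $\mu,\mu^{-1}$—together with the parity of $|z_1|$ absorbing the signs, that reconciles them.
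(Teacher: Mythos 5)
Your proof is correct and is essentially the paper's own argument: the paper defines the very same map $\tilde{\sigma}_2(a+by)=\sigma_2(a)+q^{-1}\sigma_2(b)y$ (that is, $\sigma_2$ on $A$ and $y\mapsto q^{-1}y$) and verifies graded normality and the chain-map condition exactly as you do, using the relations $\sigma_1(z_2)=qz_2$ and $\sigma_2(z_1)=q^{-1}z_1$ together with $\sigma_1\sigma_2=\sigma_2\sigma_1$, treating the even case in detail and declaring the rest a straightforward verification. The only difference is that the paper invokes those two eigenvector relations without comment, whereas you derive them from the skew-commuting hypothesis by cancelling $z_1$ (resp.\ $z_2$) on the right --- a cancellation that tacitly assumes these cycles are not zero-divisors, an assumption the paper's computation shares implicitly.
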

\begin{proof}
We set the notation $z_1z_2=(-1)^{|z_1||z_2|}qz_2z_1$ and prove the theorem in the case $|z_1|$ even, the odd case is similar.
We define the following map
\begin{alignat*}{4}
\tilde{\sigma}_2:\;&\Ay{A}{y}&\longrightarrow&\Ay{A}{y}\\
&a+by\;&\longmapsto&\sigma_2(a)+q^{-1}\sigma_2(b)y,
\end{alignat*}
and prove that $\tilde{\sigma}_2$ is a normalizing chain automorphism for $z_2$ in $\Ay{A}{y}$. Let $a+by$ be a bihomogeneous element of $\Ay{A}{y}$, we check that $z_2$ is graded normal of $z_2$ in $\Ay{A}{y}$:
\begin{align*}
z_2(a+by)&=z_2a+z_2by\\
&=(-1)^{|z_2||a|}\sigma_2(a)z_2+(-1)^{|z_2|(|a|-1)}\sigma_2(b)z_2y\\
&=(-1)^{|z_2||a|}\sigma_2(a)z_2+(-1)^{|z_2||a|}q^{-1}\sigma_2(b)yz_2\\
&=(-1)^{|z_2||a|}\tilde{\sigma}_2(a+by)z_2.
\end{align*}
It is a straightforward verification that $\tilde{\sigma}_2$ is an automorphism of the
DG algebra $\Ay{A}{Y}$.

\end{proof}

As an application of the process of killing cycles, we develop the Koszul
complex of a skew commuting sequence of normal elements with commuting normalizing
automorphisms in an arbitrary connected graded algebra.

\begin{construction} \label{Kos}
Let $R$ be a connected graded algebra, considered as a DG algebra $A^0$ concentrated
in homological degree zero with trivial differential.  Let $f_1,\dots,f_c$
be a skew commuting sequence of normal elements of $R$, and let $\sigma_i$
denote a normalizing automorphism of $f_i$.  Finally, assume that
$\sigma_i\sigma_j = \sigma_j\sigma_i$ for all $i,j$. 

For $i = 1,\dots,c$, set $A^i = \Ay{A^{i-1}}{y_i~|~\del (y_i) = f_i}$.  By Proposition
\ref{prop:stillNormal}, $f_{i+1},\dots,f_c$ are normal in $A^i$ for each $i$, which
allows for Construction \ref{cons:adjVarOdd} to continue after adjoining the
$i^\text{th}$ variable.  We define the skew Koszul complex of $f_1,\dots,f_c$
over $R$ to be the DG algebra $A^c$, and denote it by $K^R(\mathbf{f})$.

It is clear that $K^R(\mathbf{f})$ is a free left and right $R$-module
with basis given by $1 \in R = K^R(\mathbf{f})_0$ together with monomials of the form
$y_{i_1}\cdots y_{i_r}$ for $1 \leq i_1 < i_2 < \cdots < i_r \leq c$.  The differential
on this basis is:
$$
\del(y_{i_1}\cdots y_{i_r})=\sum_{j=1}^r(-1)^{j-1}f_{i_j}\prod_{s=1}^{j-1}r_{i_s,i_j }y_{i_1}\cdots\hat{y}_{i_j}\cdots y_{i_r},
$$
where $f_if_j = r_{i,j}f_jf_i$.  Note that the differential is both left
and right $R$-linear by the Leibniz rule and that this differential differs
from the one given in \cite{AGP}.
\end{construction}

Next, we recall the definition of skew polynomial ring.

\begin{definition}
Let $\q=(q_{i,j})$ be a $n\times n$ matrix with entries in $\kk$ such that $q_{i,i}=1$ for all $i=1,\ldots,n$ and $q_{i,j}=q^{-1}_{j,i}$ for all $i,j=1,\ldots,n$. Then the \emph{skew polynomial ring} associated to the matrix $\q$ is
\[
\kk_\q[x_1,\ldots,x_n]=\frac{T(\kk x_1\oplus\cdots\oplus\kk x_n)}{(x_ix_j-q_{i,j}x_jx_i\;\mathrm{for\;all\;}i,j=1,\ldots,n)},
\]
where $T(\kk x_1\oplus\cdots\oplus\kk x_n)$ is the tensor algebra of $\kk x_1\oplus\cdots\oplus\kk x_n$.
It is clear that each $x_i$ is normal in $\kk_\q[x_1,\dots,x_n]$, and that the normalizing automorphisms
of the variables commute with one another.
\end{definition}

More generally (see Section \ref{sec:colorComm}), we will show that \emph{any} pair of normal elements
in a skew polynomial ring skew commute and their normalizing automorphisms commute with one another.

\begin{definition} \label{def:koszulComplex}
Let $Q = \kk_\q[x_1,\dots,x_n]$ and $R = Q/I$ for some homogeneous ideal $I$ of $Q$, and denote by $\bar{x}_1,\ldots,\bar{x}_n$ the images of the variables of $Q$ in $R$.
We denote the Koszul complex of $\bar{x}_1,\dots,\bar{x}_n$ over $R$ by $K^R$
and call it the \emph{Koszul complex of $R$.}
\end{definition}

Next we examine the effect that adjoining a variable has on $H(A)$.  First,
we must contend with a notion of regularity, suitably modified for the DG
setting.

\begin{definition}
Let $A$ be a graded algebra, and let $z \in A$ be a bihomogeneous normal 
element.  We denote by $(0:_Az)$ the left annihilator of $z$ in $A$.  If $|z|$ is even, we say $z$
is \emph{regular} provided $(0:_A z) = 0$. If $|z|$ is odd and $z^2=0$, then we say that $z$ is
regular if $(0:_A z) = zA$.
\end{definition}
\begin{remark}
It is worth noticing that in the previous definition, the left annihilator of $z$ is zero (even case)
or $zA$ (odd case) if and only if the right annihilator of $z$ is zero (even case) or $zA$ (odd case).
\end{remark}

\begin{theorem}\label{thm:RegEl}
Let $A$ be a DG algebra, and let $z$ be a killable cycle of degree $d \geq 0$
such that $w = \cls{z}$ is regular.  Then there is a canonical isomorphism
$\frac{H(A)}{wH(A)} \to H(\Ay{A}{y})$.
\end{theorem}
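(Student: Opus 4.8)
The plan is to realize $\Ay{A}{y}$ through a short exact sequence of complexes built from its $A$-module decomposition, run the associated long exact homology sequence, and identify the connecting homomorphism with multiplication by $w = \cls{z}$; regularity of $w$ then forces the sequence to collapse to the desired isomorphism. I treat the two parities separately. When $|z|$ is even, $y$ is exterior and $\Ay{A}{y} = A \oplus Ay$ as a graded left $A$-module. Since the Leibniz term $(-1)^{|a_2|}a_2z$ in $\del(a_1 + a_2y)$ lands in $A$, the summand $A$ is a subcomplex and the quotient is $Ay$ with the (shifted) differential of $A$; this gives the short exact sequence $0 \to A \to \Ay{A}{y} \to \Sigma^{|z|+1}A \to 0$. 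Evaluating the connecting map on a cycle $a_2y$ gives $\del(a_2y) = (-1)^{|a_2|}a_2z$, so the connecting homomorphism $H_{n-|z|-1}(A) \to H_{n-1}(A)$ is, up to sign, right multiplication by $w$. As $|z|$ is even, regularity means $(0:_{H(A)}w) = 0$ (left or right, by the Remark following the definition of regular), i.e.\ this multiplication is injective in every degree. The long exact sequence therefore breaks into $0 \to wH(A) \to H(A) \xrightarrow{\iota_*} H(\Ay{A}{y}) \to 0$, yielding $H(\Ay{A}{y}) \cong H(A)/wH(A)$.

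To see that this isomorphism is the canonical one, note that it is induced by the inclusion $\iota \colon A \hookrightarrow \Ay{A}{y}$. Because $z = \del(y)$ is a boundary in $\Ay{A}{y}$, the class $w = \cls{z}$ maps to $0$, so $\iota_*$ annihilates $wH(A)$ and factors through $H(A)/wH(A)$; the computation above identifies this induced map as an isomorphism.

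When $|z|$ is odd, $y$ is a divided powers variable and $\Ay{A}{y} = \bigoplus_{i \ge 0} Ay^{(i)}$. The shift $y^{(i)} \mapsto y^{(i-1)}$ identifies $\Ay{A}{y}/A$ with $\Sigma^{|z|+1}\Ay{A}{y}$, giving $0 \to A \to \Ay{A}{y} \to \Sigma^{|z|+1}\Ay{A}{y} \to 0$; but this sequence is self-referential and its connecting map is not literally multiplication by $w$, so I instead prove directly that $\iota_*\colon H(A) \to H(\Ay{A}{y})$ is surjective with kernel $wH(A)$. For surjectivity I induct on the top divided power degree $N$ of a cycle $\beta = \sum_{i \le N} a_iy^{(i)}$: the cycle equations force $a_N$ to be an $A$-cycle and $a_Nz \in \del(A)$, whence $\cls{a_N}w = 0$; the odd regularity condition $(0:_{H(A)}w) = wH(A)$ then gives $\cls{a_N} \in wH(A)$, and using that $z$ is normal with a normalizing automorphism that is a chain map (so that $z$ may be moved to the proper side while preserving cycles), I subtract a boundary of the form $\del(c'y^{(N+1)})$ to lower $N$. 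For the kernel, $wH(A) \subseteq \ker\iota_*$ holds because $w = \cls{\del(y)} = 0$ in $\Ay{A}{y}$; for the reverse inclusion, if an $A$-cycle $a_0$ equals $\del(\beta)$, the positive$-y$ part of the equations produces, after the shift, another cycle of $\Ay{A}{y}$ to which surjectivity applies, and combining this with $z^2 = 0$ shows $\cls{a_0} \in wH(A)$.

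I expect the odd case to be the main obstacle. The difficulty is precisely that the natural short exact sequence is self-referential and its connecting map fails to be multiplication by $w$, so the clean long exact sequence argument of the even case is unavailable. The substitute is the two-step bootstrap—establishing surjectivity first and feeding it back to compute the kernel—whose essential inputs are the hypothesis $z^2 = 0$ (needed to recognize that the relevant elements $b_iz$ are cycles and that cross terms vanish) together with the odd form $(0:_{H(A)}w) = wH(A)$ of regularity. Throughout, normality of $z$ and the chain-map property of its normalizing automorphism are what license passing $z$ across elements without destroying the cycle condition.
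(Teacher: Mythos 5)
Your proof is correct, and your even case is essentially the paper's: both split $\Ay{A}{y}$ via the short exact sequence $0 \to A \to \Ay{A}{y} \to \Sigma^{d+1}A \to 0$ and identify the connecting map with multiplication by $w$ (the paper twists the projection by $\sigma^{-1}$ so the connecting map is literally \emph{left} multiplication by $w$; your untwisted version gives right multiplication up to sign, which is equivalent because $\sigma$ induces an automorphism of $H(A)$, so $wH(A) = H(A)w$ and the left and right annihilators vanish together, as the paper's remark notes). Where you genuinely diverge is the odd case. The paper filters $\Ay{A}{y}$ by divided-power degree, $F^p\Ay{A}{y} = \sum_{i \le p} Ay^{(i)}$, and runs the associated spectral sequence: the $E^1$-page consists of copies of $H(A)$ with differential given by multiplication by $w$, odd regularity kills $E^2_{p,q}$ for $p \ge 1$, and convergence gives the isomorphism. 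You instead unroll that filtration by hand: induction on the top divided power $N$ of a cycle, using $(0:_{H(A)}w) = wH(A)$ to write $a_N = zc + \del(e)$ and subtracting a boundary to lower $N$ — note the boundary must be $\del\bigl(\sigma(c)y^{(N+1)} + ey^{(N)}\bigr)$, not just $\del(c'y^{(N+1)})$, since the $\del(e)$ part of $a_N$ needs its own correction term; this is routine but worth writing out. Surjectivity of $\iota_*$ then feeds into the kernel computation via the shifted cycle $\vartheta(\beta)$ and $z^2 = 0$, exactly as you sketch. Your route is more elementary (no spectral sequence or convergence argument) at the cost of heavier bookkeeping with signs and the normalizing automorphism; the paper's route is more compact once the machinery is granted, and both consume precisely the same hypotheses: the odd form of regularity, $z^2 = 0$, normality of $z$, and the chain-map property of $\sigma$.
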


\begin{proof}
We follow the treatment given in \cite[\S 6.1]{IFR}.
Let $\sigma$ be a normalizing chain automorphism associated to $z$.
We consider the case when $d$ is even and odd separately.  If $d$ is even,
define $\vartheta : \Ay{A}{y} \to \Ay{A}{y}$ by
$\vartheta(a + by) = (-1)^{|b|}\sigma^{-1}(b)$.  A straightforward verification shows
that $\vartheta$ is a chain map of degree $-d-1$.  This gives a short exact
sequence of DG algebras
$$0 \to A \xra{\iota} \Ay{A}{y} \xra{\vartheta} A \to 0.$$
Computing the connecting map $\eth$, one sees that for $b \in H(A)$,
$\eth(b) = \sigma(b)w = wb$, so that the connecting map is left multiplication by $w$.  
Therefore the homology long exact sequence is
$$\cdots \to H_{n-d}(A) \xra{w\cdot} H_n(A) \xra{H_n(\iota)} H_n(\Ay{A}{y})
  \xra{H_n(\vartheta)} H_{n-d-1}(A) \to \cdots$$
which implies the theorem.

When $d$ is odd, we define
$\vartheta(\sum_i a_i y^{(i)}) = \sum_ia_iy^{(i-1)}$, which is again
a chain map of degree $-d-1$ and which in turn gives a short exact sequence
of DG algebras
$$0 \to A \xra{\iota} \Ay{A}{y} \xra{\vartheta} \Ay{A}{y} \to 0.$$
While multiplication by $w$ does not appear in the long exact
sequence associated to the long exact sequence above, we can determine the effect of
killing the homology class $w$ using the spectral sequence associated to the filtration
$F^p \Ay{A}{y} = \sum_{i \leq p} Ay^{(i)}$, c.f. \cite[Section 5.4]{Wei}.

By definition, ${}^0\!E_{p,q} = F_p \Ay{A}{y}_{p+q}/F_{p-1} \Ay{A}{y}_{p+q}
\cong  A_{q-dp}y^{(p)}$, and the differential ${}^0\!d_{p,q}$ sends $ay^{(p)}$ to 
$\del(a)y^{(p)}$, so that we have ${}^1\!E_{p,q} = H_{q-dp}(A)$.  The differential
${}^1\!d_{p,q}$ in turn is induced by the differential on $\Ay{A}{y}$ and sends
$ay^{(p)}$ to $(-1)^{|a|}azy^{(p-1)}$.  Therefore ${}^2\!E_{p,q}$ is the homology of the
complex
$$H_{q - d(p+1)}(A) \xra{\cdot w} H_{q - dp}(A) \xra{\cdot w} H_{q - d(p-1)}(A).$$
Therefore for all $q$ one has
$${}^2\!E_{0,q} = \frac{H_{q}(A)}{wH_{q-d}(A)}\qquad\text{and}\qquad
{}^2\!E_{p,q} = \frac{(0:_{H(A)}w)_{q-pd}}{wH_{q-(p+1)d}(A)}~~\text{when}~p\geq 1.$$
If $w$ is regular, then we have ${}^2\!E_{p,q} = 0$ for $p \geq 1$, giving us the
desired conclusion, since the spectral sequence converges to $H(\Ay{A}{y})$.
\end{proof}

A straightforward consequence of Theorem \ref{thm:RegEl} is
\begin{corollary}
If $f_1,\ldots,f_c$ is a regular sequence of normal elements in the skew polynomial ring $Q$, then
$K^Q(f_1,\ldots,f_c)$ is a $Q$-resolution of $Q/(f_1,\ldots,f_c)$.
\end{corollary}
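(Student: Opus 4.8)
The plan is to induct on $c$, building up the skew Koszul complex one variable at a time and applying Theorem \ref{thm:RegEl} at each stage. Recall from Construction \ref{Kos} that $K^Q(f_1,\dots,f_c) = A^c$, where $A^0 = Q$ is viewed as a DG algebra concentrated in homological degree zero with trivial differential and $A^i = \Ay{A^{i-1}}{y_i \mid \del(y_i) = f_i}$ for $i = 1,\dots,c$. Since Construction \ref{Kos} also records that $A^c$ is free as a left (and right) $Q$-module, it suffices to prove that $H(A^c)$ is concentrated in homological degree zero with $H_0(A^c) \cong Q/(f_1,\dots,f_c)$; this exhibits $A^c$ as a free resolution of $Q/(f_1,\dots,f_c)$.

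I would prove by induction on $i$ that $H(A^i)$ is concentrated in homological degree zero with $H_0(A^i) \cong Q/(f_1,\dots,f_i)$. The base case $i = 0$ is immediate, since $A^0 = Q$ has trivial differential. For the inductive step, first note that $f_i$, being bihomogeneous of homological degree zero, is automatically a cycle (the differential lowers homological degree, and $A^{i-1}$ vanishes in negative homological degrees). Proposition \ref{prop:stillNormal}, as invoked in Construction \ref{Kos}, ensures that $f_i$ is graded normal in $A^{i-1}$ with a normalizing automorphism that is a chain map; as $|f_i| = 0$ is even, no square-zero condition is required, so $f_i$ is a killable cycle in $A^{i-1}$.

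The key point is to check that $w = \cls{f_i}$ is regular in $H(A^{i-1})$, so that Theorem \ref{thm:RegEl} applies. By the induction hypothesis, $H(A^{i-1}) \cong Q/(f_1,\dots,f_{i-1})$ is concentrated in homological degree zero and $w$ is carried to the image of $f_i$ in this ring. Since $|w| = 0$ is even, regularity is exactly the condition $(0 :_{H(A^{i-1})} w) = 0$, that is, $f_i$ is a non-zerodivisor modulo $(f_1,\dots,f_{i-1})$ --- precisely the defining property of a regular sequence (and for normal elements the left and right annihilator conditions coincide). Theorem \ref{thm:RegEl} then produces a canonical isomorphism
$$H(A^i) = H(\Ay{A^{i-1}}{y_i}) \cong H(A^{i-1})/wH(A^{i-1}) \cong Q/(f_1,\dots,f_i).$$
Because $|w| = 0$ and $H(A^{i-1})$ is concentrated in homological degree zero, so is the quotient on the right, and hence $H(A^i)$ is as well; this closes the induction, and the case $i = c$ gives the corollary.

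I do not anticipate a genuine obstacle, since Theorem \ref{thm:RegEl} already delivers an isomorphism (not merely a surjection). The only point demanding care is the bookkeeping that $H(A^{i-1})$ remains concentrated in homological degree zero throughout, so that the ambient ring in which regularity of $\cls{f_i}$ is tested is precisely $Q/(f_1,\dots,f_{i-1})$ and the DG notion of regularity for the even-degree class $w$ reduces to the usual non-zerodivisor condition supplied by the regular sequence hypothesis.
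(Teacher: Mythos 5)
Your proof is correct and follows exactly the route the paper intends: the paper states this corollary as a ``straightforward consequence of Theorem \ref{thm:RegEl},'' and your induction on $i$ --- applying Theorem \ref{thm:RegEl} at each stage of Construction \ref{Kos}, with the regular sequence hypothesis supplying regularity of $\cls{f_i}$ in $H(A^{i-1}) \cong Q/(f_1,\dots,f_{i-1})$ --- is precisely the omitted argument. The bookkeeping you highlight (homology staying concentrated in degree zero, killability of $f_i$ via Proposition \ref{prop:stillNormal}) is exactly the right set of details to check.
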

\section{Color commutative rings}
\label{sec:colorComm}

As mentioned in the previous section, in order to iterate the procedure of
adjoining variables, one requires the cycles to skew commute, and that their
normalizing automorphisms commute.  This leads one to consider algebras for which this
hypothesis is always satisfied.  In this section, we introduce such a class of 
algebras which in the end turns out to be familiar.

For the rest of the paper, $G$ will denote an abelian group, written multiplicatively,
and we will use $e_G$ to denote the identity of $G$.

\begin{definition}
A function $\eps : G \times G \to \kk^*$ is called a \emph{skew bicharacter}
provided for all $\alpha,\beta,\sigma\in G$, one has
\begin{eqnarray*}
\eps(\sigma,\alpha)\eps(\sigma,\beta) & = & \eps(\sigma,\alpha\beta) \\
\eps(\alpha,\beta) & = & \eps(\beta,\alpha)^{-1} \\
\eps(\alpha,\alpha) & = & 1
\end{eqnarray*}
\end{definition}
Note that the last condition is not typically part of the definition, but
we require it for the proof of Proposition \ref{prop:colorDGAy}.
In particular, note that for all $\alpha,\beta \in G$, one has
$\eps(\alpha,\beta) = \eps(\alpha,\beta^{-1})^{-1}$ and $\eps(e_G,\beta) = 1$.
Our interest in skew bicharacters comes from the fact that an algebra that is
generated by skew commuting elements defines a skew bicharacter on a subgroup
of the automorphism group of the algebra, as seen in the next example.

\begin{example} \label{ex:skewColor}
Let $Q = \kk_\q[x_1,\dots,x_n]$ be a skew polynomial ring.  Then a basis of $Q$ consists of
the set of all (ordered) monomials in these variables, and are hence normal
as well.  Let $G$ be the subgroup of $\Aut(Q)$ generated by the normalizing
automorphisms associated to the variables.  Then it follows that $G$ is an abelian
group, and that $Q$ admits a $G$-grading by associating to a monomial of
$Q$ its corresponding normalizing automorphism.  

We use the skew commutativity of the variables to define a skew bicharacter on $G$.
To do this, first suppose that $\sigma$ and $\tau$ are normalizing automorphisms of 
monomials $m_\sigma$ and $m_\tau$ respectively.  Then
$m_\sigma m_\tau = q_{\sigma,\tau} m_\tau m_\sigma$ 
for some $q_{\sigma,\tau} \in \kk^*$.  We define $\eps(\sigma,\tau) = q_{\sigma,\tau}$.

For general $\sigma,\tau \in G$, we have that $\sigma = \sigma_1\sigma_2^{-1}$
for some $\sigma_1,\sigma_2$ such that each $\sigma_i$ is the normalizing automorphism
of a monomial.  Likewise, $\tau = \tau_1\tau_2^{-1}$ for some $\tau_1,\tau_2$ where
$\tau_j$ is the normalizing automorphism of a monomial.  We then define
$$\eps(\sigma,\tau) = \eps(\sigma_1,\tau_1)\eps(\sigma_2,\tau_1)^{-1}
\eps(\sigma_1,\tau_2)^{-1}\eps(\sigma_2,\tau_2).$$
A routine verification shows that $\eps$ is well-defined and that it
is a skew bicharacter on $G$.
\end{example}

To capture the commutation behavior present
in the previous example relative to the skew bicharacter $\eps$, we introduce the concept
of $\eps$-color commutativity.

\begin{definition} \label{def:colorComm}
Let $A$ be a $G$-graded $\kk$-algebra with decomposition
$A = \bigoplus_{\sigma \in G} A_\sigma$, and let $\eps$ be a skew bicharacter
defined on $G$.  We say that $A$ is \emph{$\eps$-color commutative} (or simply
\emph{color commutative} if $\eps$ is understood) if for every
$x \in A_\sigma$ and $y \in A_\tau$, one has $xy = \eps(\sigma,\tau)yx$.
An element $x \in A_\sigma$ is said to be $G$-\emph{homogeneous}.
We call the $G$-degree of a $G$-homogeneous element $x$ the \emph{color} of $x$,
and we denote this by $\gdeg{x}$.  If $x$ and $y$ are $G$-homogeneous we abuse 
notation and use $\eps(x,y)$ to denote $\eps(\gdeg{x},\gdeg{y})$.
\end{definition}

We regard $Q$ as a color commutative ring corresponding
to the $G$-grading and associated skew bicharacter defined as in Example 
\ref{ex:skewColor}.  


\begin{notation}
Let $n \geq 1$ be an integer.  For each $\bfI = (i_1,\dots,i_n) \in \bbN^n$, we denote the product 
$x_1^{i_1}\cdots x_n^{i_n}$ by $x^\bfI$, which is a $\kk$-basis of $Q$.
Given an element $f \in Q$, we let $\supp(f)$
denote the set of multiindices $\bfI$ such that the coefficient of $x^\bfI$ is nonzero
in the unique expression of $f$ as a linear combination of monomials.
\end{notation}

The following lemma regarding normal elements of $Q$ generalizes 
\cite[Lemma 3.5]{KKZ}, and characterizes the components of the
$G$-grading of $Q$ in terms of its normal elements.

\begin{lemma} \label{normalLemma} Let $Q$ be a skew polynomial ring and let $\eps$ be its associated skew bicharacter.  Then:
\begin{enumerate}
\item \label{enum:normalLemma1} A homogeneous element $f \in Q$ is normal with
normalizing automorphism $\sigma$ if and only if $f$ is $G$-homogeneous of
color $\sigma$.
\item If $f,g\in Q$ are normal and homogeneous, then there exists $p\in \kk$ such that $fg=pgf$.
\item \label{enum:normalLemma3} If $f \in Q$ is normal then $\eps(x^\bfI,x^\bfJ) = 1$
for all $\bfI,\bfJ\in \supp(f)$.
\end{enumerate}
\end{lemma}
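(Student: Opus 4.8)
The plan is to prove the three parts of Lemma~\ref{normalLemma} in order, using the $G$-grading on $Q$ described in Example~\ref{ex:skewColor} as the central organizing tool.

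\textbf{Part (1).} First I would establish the equivalence between being normal with normalizing automorphism $\sigma$ and being $G$-homogeneous of color $\sigma$. The reverse direction is essentially the definition of color commutativity: if $f \in Q_\sigma$, then for any $G$-homogeneous $g \in Q_\tau$ we have $fg = \eps(\sigma,\tau)gf$, and since $\eps(\sigma,\tau) \in \kk^*$, writing $\sigma$ for the automorphism that scales each $Q_\tau$ by $\eps(\sigma,\tau)$ shows $f$ is normal. For the forward direction, suppose $f$ is normal with automorphism $\sigma$; I would write $f = \sum_{\bfI \in \supp(f)} \alpha_\bfI x^\bfI$ and use that each monomial $x^\bfI$ is itself $G$-homogeneous, hence normal. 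The normality equation $fx_k = \sigma(x_k)f$ for each variable $x_k$ forces, upon comparing coefficients of basis monomials on both sides, that all $x^\bfI$ in the support commute with $x_k$ by the \emph{same} scalar; this is exactly the statement that they share a common color, so $f$ is $G$-homogeneous. The key mechanism is that the monomials $x^\bfI$ form a $\kk$-basis of $Q$ and multiplication by $x_k$ permutes (scales) this basis, so coefficient-matching is clean.

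\textbf{Part (2).} Given part (1), if $f,g$ are normal and homogeneous, they are $G$-homogeneous, say of colors $\sigma,\tau$. Then color commutativity gives $fg = \eps(\sigma,\tau)gf$ directly, so $p = \eps(\sigma,\tau)$ works. This part should be immediate once part (1) is in hand.

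\textbf{Part (3).} This is where I expect the main work to lie. The claim is that if $f$ is normal, then $\eps(x^\bfI, x^\bfJ) = 1$ for all $\bfI, \bfJ \in \supp(f)$, i.e. all monomials appearing in $f$ pairwise commute (not merely skew-commute). By part (1), $f$ is $G$-homogeneous of a single color $\sigma$, so every $x^\bfI$ in the support has color $\sigma$; thus $\eps(x^\bfI, x^\bfJ) = \eps(\sigma,\sigma) = 1$ by the defining property $\eps(\alpha,\alpha)=1$ of the skew bicharacter. The reason I flag this as the main obstacle is that one must be careful that the color of a \emph{monomial} $x^\bfI$ is well-defined and genuinely equals $\sigma$ for each individual term, rather than only the sum $f$ being normal in some weaker aggregate sense; the content is precisely the coefficient-matching argument already carried out in part (1), which shows each supported monomial is individually normal with the common automorphism $\sigma$. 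Once that is secured, the conclusion follows from $\eps(\sigma,\sigma)=1$. I would therefore structure the proof so that part (1)'s argument explicitly extracts that each monomial in the support is $G$-homogeneous of color $\sigma$, and then invoke this directly in part (3).
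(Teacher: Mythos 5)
Your parts (2) and (3) are fine and follow the paper's route exactly: both are immediate consequences of part (1) together with the identity $\eps(\sigma,\sigma)=1$. The genuine gap is in the forward direction of part (1), at the step you describe as ``coefficient-matching is clean.'' The normalizing automorphism $\sigma$ is an arbitrary algebra automorphism, so a priori $\sigma(x_k)$ is not a scalar multiple of $x_k$ but a general linear form $\sum_l \beta_l x_l$ (even knowing it is homogeneous of degree one takes a small argument using that $Q$ is a domain). Comparing coefficients in $fx_k=(\sum_l\beta_l x_l)f$ is then \emph{not} term-by-term: distinct pairs $(l,\bfI)$ can produce proportional basis monomials, since $x_l x^{\bfI}$ and $x_m x^{\bfJ}$ are scalar multiples of the same ordered monomial whenever $\bfI+e_l=\bfJ+e_m$, where $e_l$ denotes the $l$-th standard basis vector of $\bbN^n$. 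Right multiplication by $x_k$ does permute-and-scale the monomial basis, as you say, but that only controls the left-hand side; the right-hand side involves the unknown linear form $\sigma(x_k)$, and ruling out $\beta_l\neq 0$ for $l\neq k$ is exactly the content of the lemma. As written, your argument assumes the conclusion at the crucial point.

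The paper closes this gap in two steps for which your proposal has no counterpart. First it reduces to the case in which no monomial $x^{\bfK}$ divides every monomial of $f$: if one does, write $f=x^{\bfK}g$ and show $g$ is again normal, using that $x^{\bfK}$ is normal and regular. Second, under that hypothesis $\bar f\neq 0$ in $Q/(x_j)$ for every $j$, and reducing $fx_j=(\sum_k\beta_k x_k)f$ modulo $(x_j)$ gives $0=(\sum_{k\neq j}\beta_k\bar x_k)\bar f$, which forces $\beta_k=0$ for $k\neq j$; only then does the clean comparison $fx_j=\beta_j x_jf$ yield $\eps(x^{\bfI},x_j)=\beta_j$ for all $\bfI\in\supp(f)$. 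Note that the first step is genuinely needed for the second: if $x_j$ divided every term of $f$, then $\bar f=0$ and the mod-$(x_j)$ argument says nothing. Alternatively, your approach could be repaired by a leading-term argument: if $\beta_l\neq 0$ for some $l\neq k$, choose a term order in which $x_l$ is the largest variable, let $\bfM$ be the largest element of $\supp(f)$, and compare coefficients of $x^{\bfM+e_l}$; this monomial occurs with nonzero coefficient on the right (no collisions can cancel it, by maximality) but cannot occur on the left, a contradiction. Either way, an additional idea is required at precisely the step you declared clean.
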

\begin{proof}
If a homogeneous element $f$ of $Q$ is $G$-homogeneous, then $f$ is normal,
since it skew commutes with the variables.
For the converse, suppose that $f = \sum_{\bfI \in \bbN^n} c_\bfI x^\bfI.$  It suffices to show that
$\eps(x^\bfI,x_j) = \eps(x^{\bfI'},x_j)$ for all $\bfI, \bfI' \in \supp(f),$
since then the $G$-degree of each monomial in the support of $f$ will be the same.

This claim is easily verified if $|\supp(f)|=1$, therefore we assume $|\supp(f)|\geq2$. If 
there is an $x^\bfK$ such that $x^\bfK$ divides $x^\bfI$ for all $\bfI\in \supp(f)$ then we can write
$f$ as $x^\bfK g$ with $g$ normal. Indeed, if $h\in Q$ then $fh=\tau(h)f$ where $\tau$ is the 
normalizing automorphism of $f$.  Thus $x^\bfK gh=\tau(h)x^\bfK g$, but since $x^\bfK$ is normal we 
can write $x^\bfK gh=x^\bfK \tau'(h)g$, hence $x^\bfK (gh-\tau'(h)g)=0$.  Since $x^\bfK$ is regular
we deduce $gh=\tau'(h)g$, i.e. $g$ is normal.
If the claim is true for $g$ then it is true for $f$, therefore we can assume that there is no 
$x^\bfK$ dividing $x^\bfI$ for all $\bfI\in \supp(f)$.  

In what follows, we use $\overline{(~)}$ to denote the image of an element in $Q/(x_j)$;
notice that $\bar f$ is nonzero for any choice of $x_j$. Since $f$ is normal we have 
\[
fx_j=(\sum_k\beta_k x_k)f,
\]
which modulo $x_j$ gives 
\[
0=(\sum_{k\neq j}\beta_k \overline{x}_k)\overline {f} .
\]
Hence $\beta_k=0$ for all $k\neq j$ and
\[
fx_j=\beta_j x_j f.
\]
It follows that $\eps(x^{\bfI},x_j) = \eps(x^{\bfI'},x_j) = \beta_j$ for all $\bfI,\bfI'\in \supp(f)$.

The second claim follows since by \eqref{enum:normalLemma1}, $f$ and $g$ are $G$-homogeneous,
therefore $fg = \eps(f,g)gf$.  The final claim follows since $\eps(\sigma,\sigma) = 1$ for all $\sigma \in G$.
\end{proof}

The following proposition shows that color commutative algebras are not exotic.
Our motivation for their introduction is to contain the proliferation of constants
that arise in computations with skew commutative polynomial rings.

\begin{proposition} \label{prop:colorCommSkew}
Let $A$ be a finitely generated connected graded $\kk$-algebra.  
Then $A$ is color commutative if and only if it is a quotient of a skew polynomial 
ring by an ideal generated by homogeneous normal elements.
\end{proposition}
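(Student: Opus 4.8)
The plan is to prove the two implications separately. The implication ``skew polynomial quotient $\Rightarrow$ color commutative'' is a descent argument, while the converse requires building a presentation by hand; the latter is where the real work lies.

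For the reverse implication, suppose $A = Q/I$ with $Q = \kk_\q[x_1,\dots,x_n]$ and $I$ generated by homogeneous normal elements. I would first invoke Example \ref{ex:skewColor} to regard $Q$ as an $\eps$-color commutative algebra for a suitable $G$-grading and skew bicharacter $\eps$. By Lemma \ref{normalLemma}\eqref{enum:normalLemma1} each homogeneous normal generator of $I$ is $G$-homogeneous, so $I$ is a $G$-graded ideal and $A$ inherits the grading $A_\sigma = (Q_\sigma + I)/I$. Color commutativity then descends directly: for $\bar x \in A_\sigma$ and $\bar y \in A_\tau$ one has $\bar x\,\bar y = \overline{\eps(\sigma,\tau)\,yx} = \eps(\sigma,\tau)\,\bar y\,\bar x$, so that $A$ is $\eps$-color commutative.

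For the forward implication, I would begin by choosing a finite set of generators $a_1,\dots,a_n$ of $A$ that are homogeneous with respect to both the internal grading and the color $G$-grading. Writing $\sigma_i = \gdeg{a_i}$, I would define a matrix $\q = (q_{ij})$ by $q_{ij} = \eps(\sigma_i,\sigma_j)$. The skew bicharacter axioms $\eps(\sigma_i,\sigma_i) = 1$ and $\eps(\sigma_i,\sigma_j) = \eps(\sigma_j,\sigma_i)^{-1}$ show that $\q$ is an admissible defining matrix, so the skew polynomial ring $Q = \kk_\q[x_1,\dots,x_n]$ is defined. Since $A$ is color commutative, one has $a_ia_j = \eps(\sigma_i,\sigma_j)a_ja_i = q_{ij}a_ja_i$, so the defining relations of $Q$ are satisfied by the $a_i$ and the assignment $x_i \mapsto a_i$ extends to a surjective algebra homomorphism $\pi\colon Q \to A$.

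It then remains to analyze $I := \ker\pi$. Assigning $\deg x_i = \deg a_i$ and $\gdeg{x_i} = \sigma_i$ makes $\pi$ a morphism of algebras graded by both the internal grading and the color, so $I$ is homogeneous for both gradings; in particular $I$ is generated by elements that are simultaneously internally homogeneous and $G$-homogeneous. By Lemma \ref{normalLemma}\eqref{enum:normalLemma1} each such generator is normal, whence $A \cong Q/I$ is of the required form. The main obstacle I anticipate is not a computation but the bookkeeping needed to guarantee that the internal grading and the color on $A$ are compatible, so that generators homogeneous for both gradings exist and $\pi$ respects both gradings at once; granting this, the invocation of Lemma \ref{normalLemma} supplies normality with no further effort.
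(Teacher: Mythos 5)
Your proof is correct and follows essentially the same route as the paper: for the forward direction you choose $G$-homogeneous generators, map a skew polynomial ring $\kk_\q[x_1,\dots,x_n]$ with $q_{ij} = \eps(\sigma_i,\sigma_j)$ onto $A$, and conclude that the ($G$-homogeneous) kernel is generated by normal elements via Lemma \ref{normalLemma}\eqref{enum:normalLemma1}; the converse is exactly the paper's appeal to Example \ref{ex:skewColor} and the same lemma. The compatibility of the internal and $G$-gradings that you flag as the main bookkeeping concern is likewise assumed implicitly in the paper's proof, so your write-up is simply a more explicit rendering of the same argument.
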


\begin{proof}
Suppose $A$ is color commutative and generated by $\{x_1,\dots,x_n\}$.
Since $A$ is $G$-graded, we can decompose each $x_i$ according to the $G$-grading
and hence may assume this generating set is $G$-homogeneous.  Since $A$ is color
commutative, we have that $x_ix_j = \eps(x_i,x_j)x_jx_i$ so that $A$ is a quotient
of a skew polynomial ring $Q$.  Since the projection from $Q$ to $A$
is $G$-homogeneous, its kernel is $G$-homogeneous and hence is generated
by normal elements.  The converse follows from Example \ref{ex:skewColor}
and Lemma \ref{normalLemma}\eqref{enum:normalLemma1}.
\end{proof}

\section{Color DG Algebras} \label{sec:CDGA}

In this section, we introduce the main tool we use to study homological
properties of color commutative algebras.  It is a natural extension of the
theory of DG algebras over a commutative ring.  

\begin{definition} \label{def:colorDG}
Let $R$ be a color commutative connected graded $\kk$-algebra. Let $A$ be a DG $R$-algebra as in Definition
\ref{def:dga}.  We say that $A$ is a \emph{$\eps$-color DG $R$-algebra}
provided $A$ is $G$-graded with a grading compatible with the bigrading of $A$,
and the differential on $A$ is also $G$-homogeneous of color $e_G$.  We similarly define
the notion of left (or right) \emph{$\eps$-color DG $A$-module}.
If the bicharacter $\eps$ is understood, we simply call the above
notions \emph{color DG algebras/modules}.

We say that an element of $A$ is \emph{trihomogeneous} if it is bihomogeneous and $G$-homogeneous.

We also assume that a color DG $R$-algebra $A$ is \emph{graded color commutative}.  
That is, for all trihomogeneous 
$x,y \in A$, we assume that $xy = (-1)^{|x||y|}\eps(x,y)yx$, and that $x^2 = 0$ when
$x$ is trihomogeneous of odd homological degree.  As in the commutative case, the first
condition implies the second when the characteristic of $\kk$ is not $2$.
\end{definition}

It follows that if $A$ is a color DG algebra, then $Z(A)$ and $B(A)$ are
$G$-graded, and hence $H(A)$ carries the structure of a $G$-graded algebra.
Similarly, if $U$ is a color DG $A$-module, one also has that $H(U)$ is a color left $H(A)$ module.
An important (but simple) observation is that $Z(A)$ (and hence $H(A)$) is
color commutative so that $G$-homogeneous elements of $Z(A)$ skew commute with one another.
This is essentially the reason for working in the color commutative setting,
since now the hypotheses on the cycles appearing in Proposition \ref{prop:stillNormal} are
automatically satisfied.

Note that while the skew bicharacter $\eps$ does not enter the definition of a
$\eps$-color DG algebra, it does play a role in the notion of an $A$-linear 
map; see Definition \ref{def:colorLin} below.  When $x$ and $y$ are $G$-homogeneous
of $G$-degree $\sigma$ and $\tau$ respectively, we continue to abuse notation 
and write $\eps(x,y)$ for $\eps(\sigma,\tau)$.

\begin{remark} \label{rem:threeGradings2} 
The notion of color DG algebra brings a third grading into the mix: an internal,
a homological, and a $G$-grading.  For elements of a color DG module we will use the same terminology that has been introduced in Definition \ref{def:bihom}, \ref{def:colorComm} and \ref{def:colorDG}. Given a trihomogeneous element $u$ in a color DG module $U$, we denote 
the internal, homological and group degree of $u$ by $\deg(u)$, $|u|$ and
$\gdeg{u}$, respectively.

We denote the component of $U$ in homological degree $m$, group degree $\sigma$,
and internal degree $n$ by $U_{m,\sigma,n}$.  These indices are listed in order of
relevance for our computations, and so we also adopt the convention that when
fewer indices are used, they are left off of the end.  That is, we use $U_{m,g}$ to denote the 
component of $U$ in homological degree $m$ and group degree $g$, and
$U_m$ to denote the component of $U$ in homological degree $m$.

Let $U$ and $V$ be color DG modules over the color DG algebra $A$. A homomorphism of color DG modules
$\varphi: U\rightarrow V$ is said to be \emph{homogeneous} of (internal) degree $n\in\mathbb{Z}$ if
$\deg \varphi(u)=\deg u+n$ for all homogeneous elements $u\in U$, in which case we write $\deg\varphi=n$.
It is said to be \emph{bihomogeneous} of homological degree $m\in\mathbb{Z}$ if it is homogeneous and 
$|\varphi(u)|=|u|+m$ for all bihomogeneous elements $u\in U$, in which case we write $|\varphi|=m$. It
is said to be \emph{trihomogeneous} of color $\sigma$ if it is bihomogeneous and 
$\gdeg{\varphi(u)}=\sigma\gdeg{u}$ for all trihomogeneous elements $u\in U$, in which case we write 
$\gdeg{\varphi}=\sigma$. The map $\varphi$ is said to be $G$-homogeneous of color $\sigma$ if 
$\gdeg{\varphi(u)}=\sigma\gdeg{u}$ for all $G$-homogeneous elements $u\in U$, in which case we write 
$\gdeg{\varphi}=\sigma$.
\end{remark}

Before continuing with some basic remarks on color DG modules, we record some
facts regarding our constructions in Section \ref{sec:killCycle}. 

\begin{proposition} \label{prop:colorDGAy}
Suppose that $A$ is a color DG algebra, and $z \in Z(A)$ is a 
trihomogeneous cycle.  Then $\Ay{A}{y~|~\del(y) = z}$ is a color DG algebra.
In particular, if $R$ is a color commutative connected graded $\kk$-algebra,
and $f_1,\dots,f_c$ is a sequence of normal elements of $R$, then the Koszul complex
$K^R(\mathbf{f})$ given in Construction \ref{Kos} is a color DG algebra.
\end{proposition}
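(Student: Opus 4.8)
The plan is to verify that the construction $\Ay{A}{y \mid \del(y) = z}$ produces an object satisfying all the conditions in Definition \ref{def:colorDG}: namely that it is a color DG algebra (a $G$-graded, graded color commutative DG algebra with a color-$e_G$ differential). Since Proposition \ref{prop:adjVarOddDGA} and its even-degree analogue already establish that $\Ay{A}{y}$ is a DG $\kk$-algebra, the work here is to promote the underlying twisted tensor product to the color setting. First I would assign to the new variable $y$ the color $\gdeg{y} := \gdeg{z}$, together with the prescribed bidegree $(|z|+1, \deg z)$; this is forced if $\del(y) = z$ is to be $G$-homogeneous of color $e_G$, since the differential must preserve color. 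With this assignment one checks that the formula for $\tau$ and the differential respect the $G$-grading, so that $\Ay{A}{y}$ becomes trigraded and the differential is trihomogeneous of color $e_G$.

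The crucial point, and the main obstacle, is verifying that $\Ay{A}{y}$ is \emph{graded color commutative}. The normalizing automorphism $\sigma$ used in Constructions \ref{cons:adjVarOdd} and \ref{cons:adjVarEven} must be the \emph{color normalizing automorphism} determined by $z$; that is, I would take $\sigma(a) = \eps(\gdeg{a},\gdeg{z}) a$ for trihomogeneous $a \in A$. This is exactly why the hypothesis $\eps(\alpha,\alpha) = 1$ is flagged in the remark after the definition of skew bicharacter as being needed precisely here. With this choice, the relation $ya = (-1)^{|a|}\sigma(a)y = (-1)^{|a||y|}\eps(y,a) a y$ recorded in the construction becomes exactly the graded color commutativity relation between $y$ and $a$, and one must then check color commutativity between arbitrary products of $A$ and powers of $y$. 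In the odd-$|z|$ case (where $y$ has even homological degree and divided powers are used) one checks $y^{(i)} a = \eps(\gdeg{a}, \gdeg{y})^i a\, y^{(i)}$ via $\sigma^i$, which matches the sign $(-1)^{|a||y^{(i)}|} = 1$. In the even-$|z|$ case (where $y$ has odd homological degree) the condition $y^2 = 0$ for odd trihomogeneous elements is built into the exterior algebra, and $\eps(y,y) = \eps(\gdeg z, \gdeg z) = 1$ is consistent.

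To handle the general graded color commutativity relation $uv = (-1)^{|u||v|}\eps(u,v) vu$ for arbitrary trihomogeneous $u, v \in \Ay{A}{y}$, I would reduce to the generating cases by bilinearity: commuting two elements of $A$ (which holds since $A$ is already graded color commutative), commuting an element of $A$ past $y$ (the relation from the construction, now reinterpreted color-theoretically), and commuting $y$ past itself (handled by $y^2 = 0$ or the binomial identity for divided powers). Since every element of $\Ay{A}{y}$ is an $A$-combination of $1$ and powers of $y$, these cases suffice, and the consistency of $\eps$ as a bicharacter ensures the signs and scalars propagate correctly through products.

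Finally, the ``in particular'' assertion follows by iteration. Given $R$ color commutative and $f_1,\dots,f_c$ normal elements, by Lemma \ref{normalLemma}\eqref{enum:normalLemma1} each $f_i$ is $G$-homogeneous, hence trihomogeneous (concentrated in homological degree zero), and the cycles skew commute with commuting normalizing automorphisms automatically, as noted in the discussion following Definition \ref{def:colorDG}. Thus each stage $A^i = \Ay{A^{i-1}}{y_i \mid \del(y_i) = f_i}$ in Construction \ref{Kos} is a color DG algebra by the first part of the proposition, and applying the result $c$ times shows $K^R(\mathbf{f}) = A^c$ is a color DG algebra. I expect the one genuinely delicate verification to be confirming that graded color commutativity holds across the twisted tensor product in the divided-powers case, where one must track the scalar $\eps(\gdeg a, \gdeg y)^i$ against the binomial coefficients appearing in $y^{(i)}y^{(j)}$.
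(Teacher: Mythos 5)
Your overall strategy is the same as the paper's: assign $y$ the internal and $G$-degree of $z$, observe that the normalizing automorphism of a trihomogeneous cycle in a color commutative algebra is just multiplication by bicharacter values, and check color commutativity on generators. (One small slip there: you wrote $\sigma(a) = \eps(\gdeg{a},\gdeg{z})\,a$, but to get $ya = (-1)^{|y||a|}\eps(y,a)ay$ you need $\sigma(a) = \eps(\gdeg{z},\gdeg{a})\,a$; your own displayed formula is inconsistent with your definition of $\sigma$.) However, there is a genuine gap in how you handle the \emph{strict} part of graded color commutativity, and it is exactly the part to which the paper devotes essentially its entire written proof.

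Definition \ref{def:colorDG} requires $u^2 = 0$ for \emph{every} trihomogeneous element $u$ of odd homological degree, not just for the generators. This condition is quadratic, so unlike the commutation relation it does \emph{not} reduce to the generating cases by bilinearity, and your claim that it ``is built into the exterior algebra'' is false for sums: the exterior (or divided powers) structure only gives $y^2=0$ and $a^2 = 0$ for $a \in A$, whereas a general odd trihomogeneous element has the form $u = a_1 + a_2y$ (exterior case) or $u = \sum_i a_iy^{(i)}$ (divided powers case), and its square produces cross terms. The paper's proof consists precisely of killing these cross terms: writing $u = \sum_i a_iy^{(i)}$, each summand $a_iy^{(i)}$ has the \emph{same} $G$-degree $\sigma$ as $u$ (this is where trihomogeneity of $u$, as opposed to trihomogeneity of each summand separately, is used), so color commutativity gives
\begin{equation*}
(a_iy^{(i)})(a_jy^{(j)}) = -\eps(\sigma,\sigma)(a_jy^{(j)})(a_iy^{(i)}) = -(a_jy^{(j)})(a_iy^{(i)}),
\end{equation*}
and the pairs cancel, while the diagonal terms $(a_iy^{(i)})^2$ vanish because $a_i^2=0$ in $A$. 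Note that this cancellation is where the nonstandard axiom $\eps(\alpha,\alpha)=1$ is genuinely indispensable: a skew bicharacter only forces $\eps(\sigma,\sigma) = \pm 1$, and with the value $-1$ the cross terms would \emph{not} cancel. So your placement of that axiom (at the normalizing-automorphism step) misses its real role, and your proof as written never establishes the square-zero condition for non-monomial elements. Once you add this cross-term argument, the rest of your proposal (including the iteration for $K^R(\mathbf{f})$) goes through as in the paper.
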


\begin{proof}
By assigning the internal and $G$-degree of $y$ to that of $z$, the differential
of $\Ay{A}{y}$ is homogeneous and $G$-homogeneous.  Showing that $\Ay{A}{y}$ is
color commutative is a straightforward verification using Lemma
\ref{normalLemma}\eqref{enum:normalLemma3} and the
fact that the normalizing automorphism of $y$ is the same as that of $z$.

It remains to prove that trihomogeneous elements of $\Ay{A}{y}$ of odd homological degree square to zero.
We prove the case that $y$ has even degree, the odd degree case is similar.  Suppose that
$z = \sum a_i y^{(i)}$ is trihomogeneous of odd degree.  Then
\begin{eqnarray*}
z^2 & = & \sum_{i,j} (a_iy^{(i)})(a_jy^{(j)}) \\
    & = & \sum_{i < j} \left((a_iy^{(i)})(a_jy^{(j)}) + (a_jy^{(j)})(a_iy^{(i)})\right) + \sum_i (a_iy^{(i)})^2.
\end{eqnarray*}
Each term in the first sum in the last line of the display is zero since for all $i$,
$a_iy^{(i)}$ has the same $G$-degree as $z$, $\eps(z,z) = 1$, and $z$ has odd homological degree.
Each term in the second sum is zero since $a_i^2 = 0$ for each $i$.
\end{proof}

\begin{construction} \label{constr:AcyClos}
Let $R$ be a color commutative connected graded $\kk$-algebra.  Let $F_1$ be the Koszul complex as constructed
in Construction \ref{Kos}.   For $n \geq 1$, set
$$F_{n+1}=\Ay{F_n}{y_1,\ldots,y_m\mid \partial(y_i)=z_i,\;i=1,\ldots,m}$$
where $\HH{n}{F_n}=\langle\cls{z_1},\ldots,\cls{z_m}\rangle$.  This iterative process is possible
since the set of representative cycles chosen as generators must skew commute because they
are $G$-homogeneous.

Then the complex $F=\varinjlim F_n$ is a free resolution of $\kk$ over $R$. In Section \ref{sec:AcyClos} we will prove that if the generators of $\HH{n}{F_n}$ are chosen minimally for all $n$ then the resolution $F$ will be minimal.
\end{construction}

Next, we develop some basic properties of color DG modules over a color DG algebra.
This involves the next construction, which helps control the introduction
of some constants in the development that follows.

\begin{definition} \label{def:rightAction}
Let $A$ be a color DG algebra.  Let $U$ be a left $A$-module.
Then $U$ may be given the structure of an $A$-bimodule by setting
$ua = (-1)^{|u||a|}\eps(\sigma,\tau)au$ for all $u$ of $G$-degree $\sigma$
and $a$ of $G$-degree $\tau$.  Note that if $A \to B$ is a homomorphism of color DG algebras,
and $B$ is viewed as a left $A$-module, then this bimodule structure is
compatible with the usual one.
\end{definition}

\begin{definition} \label{def:colorLin} Let $U$ and $V$ be left color DG modules
over a color DG algebra $A$.  A homomorphism of complexes $\beta : U \to V$
is said to be \emph{(left) $A$-linear of color $\sigma$} if it is trihomogeneous of
group degree $\sigma$, and if for all trihomogeneous elements $a \in A$ and $u \in U$, one 
has $\beta(au) = (-1)^{|a||\beta|}\eps(\beta,a)a\beta(u)$.
The subspace of $\Hom_\kk(U,V)$ spanned by all $A$-linear trihomogeneous homomorphisms of complexes is denoted $\Hom_A(U,V)$.  By definition, this set is
$G$-graded, with $\sigma$ component given by the set of all $A$-linear 
homomorphisms of color $\sigma$ from $U$ to $V$. By definition, this set also has a natural internal and homological grading.

For all trihomogeneous $a,\beta,u$ as above, the action
$$(a\beta)(u) = a(\beta(u)) = (-1)^{|a||\beta|}\eps(a,\beta)\beta(au)$$
and differential $\del(\beta) = \del^V\beta - (-1)^{|\beta|}\beta\del^U$
gives $\Hom_A(U,V)$ the structure of a color DG module over $A$.
As usual, there is a correspondence between cycles of $\Hom_A(U,V)$
and $A$-linear chain maps from $U$ to $V$, as well as a correspondence
between homotopy classes of $A$-linear chain maps from $U$ to $V$ and homology 
classes in $\Hom_A(U,V)$.
\end{definition}

\begin{remark} \label{rem:rightLinear}
Let $A$ be a color DG algebra with $A_0 = R$ a color commutative $\kk$-algebra.
Note that while an $A$-linear map $\psi \in \Hom_A(U,V)$ is not left $R$-linear,
it is right $R$-linear using the right action introduced in Definition \ref{def:rightAction}.
Indeed, for all trihomogeneous elements $u \in U$ and homogeneous and $G$-homogeneous elements $r \in R$, one has
\begin{align*}
\psi(ur)&=\psi(\e(u,r)ru)\\
&=\e(u,r)\psi(ru)\\
&=\e(u,r)\e(\psi,r)r\psi(u)\\
&=\e(\psi(u),r)r\psi(u)\\
&=\psi(u)r.
\end{align*}
It follows that we may view elements of $\Hom_A(U,V)$ (which are defined using
a left color linearity condition) as \emph{right} linear homomorphisms of $R$-modules.
\end{remark}

\begin{definition} Let $U$ and $V$ be color DG modules over $A$.  The tensor
product $U \otimes_A V$ is the quotient of the (trigraded) tensor product
$U \otimes_\kk V$ by the vector space spanned by all elements of the form
$ua \otimes_\kk v - u \otimes_\kk av$.  The left action of $A$ on $U$
provides the $A$-action on $U \otimes_A V$, and the differential
$$\del(u \otimes v) = \del(u)\otimes v + (-1)^{|u|}u\otimes\del v$$
gives $U \otimes_A V$ the structure of a color DG module over $A$.
\end{definition}

\begin{proposition} \label{prop:adjoint}
Let $A$ and $B$ be color DG algebras, and $A \to B$ a morphism of DG algebras.
Let $U$ be a color DG $A$-module, and $V,W$ color DG $B$-modules.  Then the 
map
\begin{eqnarray*}
\Phi : \Hom_A(U, \Hom_B(V,W)) & \to & \Hom_B(U \otimes_A V, W) \\
\varphi & \mapsto & \left( u \otimes v \mapsto \varphi(u)(v) \right),
\end{eqnarray*}
is an isomorphism of color DG $A$-modules.
\end{proposition}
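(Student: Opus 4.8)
The plan is to exhibit an explicit two-sided inverse to $\Phi$ and then check that $\Phi$ is a morphism of color DG $A$-modules; since a bijective morphism of color DG modules is an isomorphism, this suffices. Define $\Psi : \Hom_B(U \otimes_A V, W) \to \Hom_A(U, \Hom_B(V,W))$ by $\Psi(\psi)(u)(v) = \psi(u \otimes v)$. On elements it is immediate that $\Phi$ and $\Psi$ are mutually inverse, so the content of the argument lies entirely in checking that $\Phi$ and $\Psi$ land in the claimed Hom-spaces and that $\Phi$ respects the three gradings, the $A$-action, and the differentials.

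First I would verify that $\Phi(\varphi)$ is well defined on the quotient $U \otimes_A V$, i.e.\ that $\varphi(ua)(v) = \varphi(u)(av)$ for trihomogeneous $a \in A$. Writing $ua = (-1)^{|u||a|}\eps(u,a)\, au$ via the bimodule structure of Definition \ref{def:rightAction} and using that $\varphi$ is left $A$-linear, the left side becomes $(-1)^{|u||a| + |a||\varphi|}\eps(u,a)\eps(\varphi,a)\, a\bigl(\varphi(u)(v)\bigr)$; on the right side, $B$-linearity of $\varphi(u)$ together with $|\varphi(u)| = |\varphi| + |u|$ and $\eps(\varphi(u),a) = \eps(\varphi,a)\eps(u,a)$ produces exactly the same constant. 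The crucial inputs are the multiplicativity of $\eps$ in each slot and the additivity of the homological degree under evaluation. Next I would check that $\Phi(\varphi)$ is genuinely $B$-linear of color $\gdeg{\varphi}$: moving $b \in B$ past $u$ introduces a factor $(-1)^{|b||u|}\eps(b,u)$, while passing $b$ through the $B$-linear map $\varphi(u)$ introduces $(-1)^{|b|(|\varphi|+|u|)}\eps(\varphi,b)\eps(u,b)$; the two factors $\eps(b,u)$ and $\eps(u,b)$ cancel because $\eps(\alpha,\beta)=\eps(\beta,\alpha)^{-1}$, the powers of $-1$ in $|b||u|$ cancel, and what remains is precisely $(-1)^{|b||\varphi|}\eps(\varphi,b)\, b\,\Phi(\varphi)(u\otimes v)$. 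The same identities show $\Psi(\psi)(u)$ is $B$-linear and that $\Psi(\psi)$ is $A$-linear.

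It then remains to see that $\Phi$ is trihomogeneous of color $e_G$ and homological degree $0$, that it is $A$-linear, and that it commutes with the differentials. Grading preservation is immediate from the formula $\Phi(\varphi)(u\otimes v) = \varphi(u)(v)$, and $A$-linearity reduces to the identity $(a\varphi)(u)(v) = a\bigl(\varphi(u)(v)\bigr)$, which is exactly the $A$-actions on $\Hom_A(U,\Hom_B(V,W))$ and on $\Hom_B(U\otimes_A V,W)$ unwound. For the chain-map property I would evaluate both $\Phi(\del\varphi)$ and $\del(\Phi\varphi)$ on a trihomogeneous $u\otimes v$, using $\del(u\otimes v) = \del u \otimes v + (-1)^{|u|}u\otimes \del v$, the Hom-differential $\del(\beta)=\del^V\beta - (-1)^{|\beta|}\beta\del^U$ applied once for $\Hom_B(V,W)$ and once for the outer Hom, and $|\varphi(u)| = |\varphi|+|u|$; both sides collapse to $\del^W(\varphi(u)(v)) - (-1)^{|\varphi|}\varphi(\del u)(v) - (-1)^{|\varphi|+|u|}\varphi(u)(\del v)$, and here no bicharacter constants appear because the differentials are $G$-homogeneous of color $e_G$.

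I expect the main obstacle to be the constant bookkeeping in the first two verifications rather than any conceptual difficulty: the whole point is that the factors of $-1$ and of $\eps$ organize themselves correctly, and this hinges on using $\eps(\alpha,\beta)\eps(\beta,\alpha)=1$ together with the multiplicativity and additivity of the three gradings at each step. A secondary point worth making explicit is the $B$-module structure on $U\otimes_A V$ inherited from $V$ (namely $b(u\otimes v) = (-1)^{|b||u|}\eps(b,u)\,u\otimes bv$), since it is this structure that makes the target $\Hom_B(U\otimes_A V, W)$ meaningful and against which the $B$-linearity of $\Phi(\varphi)$ must be checked.
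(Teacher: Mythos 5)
Your proof is correct and takes essentially the same approach as the paper: the paper's entire proof consists of the well-definedness check on $U \otimes_A V$ (with exactly the sign and bicharacter bookkeeping you describe, using $\eps(u,a)\eps(\varphi,a)$ and $|\varphi(u)|=|\varphi|+|u|$) and then declares the remaining claims easily checked. You simply carry out those remaining verifications explicitly — the inverse map $\Psi$, the $B$-linearity of $\Phi(\varphi)$, compatibility with the differentials, and the $B$-module structure on $U \otimes_A V$ — which is a reasonable filling-in of what the paper omits.
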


\begin{proof}
The map $\Phi(\varphi)$ is clearly $\kk$-linear and trihomogeneous.
To see that it preserves
the relations of $U \otimes_A V$, note that if $\varphi$ is trihomogeneous,
one has
\begin{align*}
\Phi(\varphi)(ua \otimes v) &= \varphi(ua)(v) \\
         &= (-1)^{|a||u|}\eps(u,a)\varphi(au)(v) \\
         &= (-1)^{|a|(|u| + |\varphi|)}\eps(u,a)\eps(\varphi,a)a\varphi(u)(v) \\
         &= \varphi(u)(av) = \Phi(\varphi)(u\otimes av). 
\end{align*}
The remaining claims are easily checked.
\end{proof}

To finish the section, we record some properties of an important class of color DG modules - 
those whose underlying module is is free.

\begin{definition} A bounded below color DG module $F$ over $A$ is \emph{semi-free} if its
underlying $A^\natural$-module $F^\natural$ has an $A^\natural$-basis $\{e_\lambda\}_{\lambda \in
\Lambda}$ which is trihomogeneous.
\end{definition}

The following proposition appears as \cite[Proposition 1.3.1]{IFR} in the case of
DG algebras.  The same proof that is given there is applicable here as well.
\begin{proposition} \label{prop:semiFreeLift}
Suppose $F$ is a semi-free color DG module over a color DG algebra $A$.
Then each diagram of morphisms of color DG modules over $A$ represented by solid
arrows
$$\xymatrix{
   & U \ar[d]^\beta_\simeq \\
F \ar@{.>}[ur]^\gamma \ar[r]_\alpha & V
}$$
with a surjective quasi-isomorphism $\beta$ can be completed to a commutative diagram
by a morphism $\gamma$ that is uniquely defined up to $A$-linear homotopy.
\end{proposition}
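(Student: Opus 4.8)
The plan is to follow the classical lifting argument for semi-free modules verbatim, as the proposition itself indicates, but to verify carefully that every constant introduced by the skew bicharacter $\eps$ is consistent with the left color-linearity condition of Definition \ref{def:colorLin}. First I would reduce to the case where $F^\natural$ has a single basis element in each trihomogeneous degree, then handle the general case by treating the basis elements $\{e_\lambda\}_{\lambda \in \Lambda}$ one at a time. Writing $\del^F(e_\lambda) = \sum_\mu a_{\lambda\mu} e_\mu$ with $a_{\lambda\mu} \in A$ trihomogeneous, the task is to define $\gamma(e_\lambda) \in U$ so that $\beta\gamma(e_\lambda) = \alpha(e_\lambda)$ and $\gamma\del^F(e_\lambda) = \del^U\gamma(e_\lambda)$, and then to extend $\gamma$ to all of $F$ by the color-linearity rule $\gamma(a e_\lambda) = (-1)^{|a||\gamma|}\eps(\gamma,a)\,a\,\gamma(e_\lambda)$.

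The construction proceeds by induction on the homological degree of the basis elements, which is available because $F$ is bounded below. For a basis element $e_\lambda$ of minimal degree, $\del^F(e_\lambda)$ lies in $A$-span of lower (hence already-defined) elements, so $\gamma\del^F(e_\lambda)$ is a well-defined cycle in $U$; since $\beta$ is a surjective quasi-isomorphism one lifts this cycle through $\beta$ and adjusts by a suitable element to match $\alpha(e_\lambda)$, exactly as in the proof of \cite[Proposition 1.3.1]{IFR}. The inductive step is identical. The only genuinely new point relative to the commutative setting is bookkeeping: I must check that the element $\gamma\del^F(e_\lambda) - \del^U(\text{partial lift})$ is a cycle of the correct color and homological degree so that acyclicity of the mapping cone of $\beta$ (equivalently, the lifting property of a surjective quasi-isomorphism) applies.

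For uniqueness up to homotopy, suppose $\gamma, \gamma'$ are two lifts. Then $\gamma - \gamma'$ is an $A$-linear chain map lifting the zero map through $\beta$, and one constructs an $A$-linear homotopy $s$ with $\del^U s + s\del^F = \gamma - \gamma'$ by the same degree-by-degree induction, again using that $\beta$ is a surjective quasi-isomorphism. Here the subtlety is verifying that the homotopy $s$, once defined on basis elements and extended by the color-linearity rule, genuinely satisfies the $A$-linearity condition with its own color and the associated sign-and-$\eps$ factors; this is exactly the computation already recorded in Remark \ref{rem:rightLinear}, which shows left color-linear maps behave as right $R$-linear maps.

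The main obstacle I expect is not conceptual but notational: ensuring that the extension of $\gamma$ (and of the homotopy $s$) from the basis to all of $F^\natural$ by the color-linearity formula is \emph{well-defined} and \emph{commutes with the differentials} once the $\eps$-twists are inserted. Concretely, one must confirm the identity $\del^U\gamma(a e_\lambda) = \gamma\del^F(a e_\lambda)$ holds after expanding both sides using the Leibniz rule, the relation $\gamma(a e_\lambda) = (-1)^{|a||\gamma|}\eps(\gamma,a)\,a\,\gamma(e_\lambda)$, and the fact that $\del$ is $G$-homogeneous of color $e_G$ (so $\eps(\del a, \,\cdot\,) = \eps(a,\,\cdot\,)$). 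Since all these signs and bicharacter values are forced and track correctly by Definition \ref{def:colorLin}, the verification is routine, and I would simply remark that the proof of \cite[Proposition 1.3.1]{IFR} carries over once these color conventions are in place, rather than reproduce the full calculation.
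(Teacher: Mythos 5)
Your proposal is correct and takes essentially the same route as the paper: the paper gives no independent argument, stating only that the proof of \cite[Proposition 1.3.1]{IFR} applies verbatim in the color setting, and your degree-by-degree lifting argument (with the observation that the $\eps$-twists are harmless, indeed trivial since $\gamma$ and the homotopy have color $e_G$) is exactly that proof spelled out.
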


Standard arguments (c.f. \cite[Proposition 1.3.2]{IFR}) also provide the following proposition.

\begin{proposition} \label{prop:homPreservesQuism}
If $F$ is a semi-free color DG module, then each quasi-isomorphism
$\beta : U \to V$ of color DG modules over $A$ induces quasi-isomorphisms
$$\Hom_A(F,\beta) : \Hom_A(F,U) \to \Hom_A(F,V); \qquad
  F \otimes_A \beta : F \otimes_A U \to F \otimes_A V.$$
\end{proposition}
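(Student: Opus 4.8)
The final statement to prove is \textbf{Proposition \ref{prop:homPreservesQuism}}, which asserts that for a semi-free color DG module $F$, a quasi-isomorphism $\beta : U \to V$ of color DG $A$-modules induces quasi-isomorphisms both on $\Hom_A(F,-)$ and on $F \otimes_A -$. The plan is to follow the standard cone argument from the classical DG theory (c.f. \cite[Proposition 1.3.2]{IFR}), checking only that the color-graded bookkeeping does not obstruct it, and leaning heavily on Proposition \ref{prop:semiFreeLift}, which has already been established in the color setting.

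\begin{proof}
The strategy is to reduce both statements to the behavior of the functors on an acyclic complex via mapping cones. Since $\beta : U \to V$ is a morphism of color DG $A$-modules, its mapping cone $C = \operatorname{cone}(\beta)$ is again a color DG $A$-module, and $\beta$ is a quasi-isomorphism if and only if $C$ is acyclic, i.e. $H(C) = 0$. First I would record that both functors $\Hom_A(F,-)$ and $F \otimes_A -$ commute with the formation of mapping cones up to the usual sign/color twist, so that $\Hom_A(F,C)$ is the cone of $\Hom_A(F,\beta)$ and $F \otimes_A C$ is the cone of $F \otimes_A \beta$. Thus it suffices to prove the following: if $C$ is an acyclic color DG $A$-module, then $\Hom_A(F,C)$ and $F \otimes_A C$ are both acyclic. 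This is where semi-freeness of $F$ enters.

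For the $\Hom$ statement, the key point is that when $C$ is acyclic, every $A$-linear chain map $F \to C$ is null-homotopic, and moreover homotopies can be constructed compatibly. Concretely, an acyclic $C$ means the identity map $0 \to C$ and the zero map $0 \to C$ induce the same thing up to homotopy; applying Proposition \ref{prop:semiFreeLift} with the semi-free module $F$ in the lower-left corner and the surjective quasi-isomorphism $C \xra{\simeq} 0$ (or more precisely the appropriate lifting diagram) shows that any cycle in $\Hom_A(F,C)$ is a boundary. The correspondence recorded at the end of Definition \ref{def:colorLin} between cycles of $\Hom_A(F,C)$ and $A$-linear chain maps, and between homology classes and homotopy classes of such chain maps, translates the vanishing of $H(\Hom_A(F,C))$ into exactly the uniqueness-up-to-homotopy clause of Proposition \ref{prop:semiFreeLift}. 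For the tensor statement, one filters $F$ by the semi-free filtration coming from its trihomogeneous $A^\natural$-basis $\{e_\lambda\}$; on each associated graded piece the functor $-\otimes_A C$ reduces to a shift of $C$ itself (since $F^\natural$ is free), which is acyclic, and a standard filtration/colimit argument then gives acyclicity of $F \otimes_A C$.

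The main obstacle I anticipate is purely bookkeeping rather than conceptual: one must verify that the cone construction, the filtration of $F$, and the identification of associated graded pieces all respect the extra $G$-grading and carry the correct factors of $\eps$, using the right-linearity observation of Remark \ref{rem:rightLinear} and the color sign conventions of Definition \ref{def:colorLin} and Definition \ref{def:rightAction}. In particular, the isomorphism $\Hom_A(F,\operatorname{cone}\beta) \cong \operatorname{cone}(\Hom_A(F,\beta))$ should be checked to be an isomorphism of \emph{color} DG modules, i.e. trihomogeneous and compatible with the bicharacter twist; the analogous check is needed for the tensor product. Since Proposition \ref{prop:semiFreeLift} already packages the genuinely homotopy-theoretic content in the color setting, and the adjunction of Proposition \ref{prop:adjoint} guarantees that $\Hom_A$ and $\otimes_A$ interact correctly with the color structure, none of these checks present a real difficulty; they are the same verifications as in \cite[Proposition 1.3.2]{IFR} with $\eps$-factors inserted, and the argument given there applies verbatim once the color-graded analogues of the inputs are in place.
\end{proof}
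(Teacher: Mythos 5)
Your proof is correct and takes essentially the same route as the paper, whose proof is simply a citation of the standard argument of \cite[Proposition 1.3.2]{IFR}: reduce via mapping cones to showing that $\Hom_A(F,C)$ and $F\otimes_A C$ are acyclic whenever $C$ is acyclic, handle the $\Hom$ case by applying the lifting/uniqueness-up-to-homotopy statement (Proposition \ref{prop:semiFreeLift}) to the surjective quasi-isomorphism $C\to 0$ together with the cycle/chain-map correspondence noted in Definition \ref{def:colorLin}, and handle the tensor case by the semi-free filtration. The only cosmetic slips are that the associated graded pieces of that filtration are direct sums of (color-twisted) shifts of $C$ rather than a single shift, and that cycles of arbitrary homological degree $n$ in $\Hom_A(F,C)$ are treated by replacing $C$ with the shifted module $\Sigma^{n}C$, which is again acyclic.
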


\section{The acyclic closure and its properties} \label{sec:AcyClos}

In this section, we explore properties of semi-free extensions of the form
$\Ay{A}{Y}$ where $A$ is a color DG algebra, and $Y$ is a set of divided
powers variables whose differential is compatible with all gradings; see Constructions
\ref{cons:adjVarOdd} and \ref{cons:adjVarEven}.

\begin{definition}
Let $A$ be a color DG algebra, and let $U$ be a color module
over $\Ay{A}{Y}^\natural$, where $Y$ is a set of exterior and divided power variables
of positive degree.  A trihomogeneous $\kk$-linear map $D : \Ay{A}{Y} \to U$
such that
\begin{align*}
D(a)   =~& 0 & & \text{for all}~a \in A, \\
D(bb') =~& D(b)b' + (-1)^{|D||b|}\eps(D,b)bD(b') & & \text{for all}~b,b' \in \Ay{A}{Y}, b,b'~\text{trihomogeneous}, \\
D(y^{(i)}) =~& D(y)y^{(i-1)} & & \text{for all}~y \in Y_\text{even}~\text{and all}~i \in \bbN,
\end{align*}
is called an \emph{$A$-linear color derivation}.  Note that according to Definition \ref{def:colorLin}
such a map is a homomorphism of left color $A^\natural$-modules.
\end{definition}

Using the above properties, one sees that
$$D(y_{\lambda_1}^{(i_{\lambda_1})}\cdots y_{\lambda_q}^{(i_{\lambda_q})}) = 
\sum_{j=1}^q (-1)^{s_{j-1}} c_{j-1} y_{\lambda_1}^{(i_{\lambda_1})}\cdots
D(y_{\lambda_j}^{(i_{\lambda_j})})\cdots y_{\lambda_q}^{(i_{\lambda_q})}$$
where $s_j = |D|(i_{\lambda_1}|x_{\lambda_1}| + \cdots + i_{\lambda_j}|x_{\lambda_j}|)$,
and $c_j = \eps(D,y_{\lambda_1}^{(i_{\lambda_1})}\cdots y_{\lambda_j}^{(i_{\lambda_j})})$.
This implies that a derivation $D$ is determined by its value on $Y$.
In fact, the converse is true:  each trihomogeneous function $Y \to U$ extends to a unique
$A$-linear color derivation from $\Ay{A}{Y}$ to $U$ using the formulas above and $A$-linearity.

For a fixed $\sigma \in G$, we denote the set of all $A$-linear derivations
of degree $\sigma$ and homological degree $i$ from $\Ay{A}{Y}$ to $U$ by $\Der_A(\Ay{A}{Y},U)_{i,\sigma}$, which is
a left $A$-module using the action given in Definition \ref{def:colorLin}.  
Finally, we let $\Der_A(\Ay{A}{Y},U)$ denote the direct sum
$\bigoplus_{(i,\sigma) \in \bbZ \times G} \Der_A(\Ay{A}{Y},U)_{i,\sigma}$.

One may check that if $U$ is a color DG module over $\Ay{A}{Y}$, then 
$\Der_A(\Ay{A}{Y},U)$ is a color DG $\Ay{A}{Y}$-submodule of $\Hom_A(\Ay{A}{Y},U)$.
Further, if $\beta : U \to V$ is a homomorphism of color DG modules over $\Ay{A}{Y}$,
then the induced map
$$\Der_A(\Ay{A}{Y},\beta) : \Der_A(\Ay{A}{Y},U) \to \Der_A(\Ay{A}{Y},V)$$
is also a homomorphism of color DG modules over $\Ay{A}{Y}$; that is,
$\Der_A(\Ay{A}{Y},-)$ is an endofunctor of the category of color DG modules
over $\Ay{A}{Y}$.  The next proposition shows that this functor is
representable; its proof is a straightforward adaptation
of \cite[Proposition 6.2.3]{IFR} and is omitted.


\begin{proposition} \label{prop:Diff}
Let $\Ay{A}{Y}$ be a semi-free extension of the color DG algebra $A$.  
Then there exists a semi-free color DG module $\Diff_A\Ay{A}{Y}$ over $\Ay{A}{Y}$
and a degree $(0,e_G)$ chain derivation $d : \Ay{A}{Y} \to \Diff_A\Ay{A}{Y}$
such that
\begin{enumerate}
\item The $\Ay{A}{Y}^\natural$-module $(\Diff_A(\Ay{A}{Y})^\natural$ has a basis
$dY = \{dy~|~y \in Y\}$, where $dy$ and $y$ have the same internal, homological,
and group gradings.
\item $d(y) = dy$ for all $y \in Y$.
\item $\del(b(dy)) = \del(b(dy)) + (-1)^{|b|}bd(\del(y))$ for all
$b \in \Ay{A}{Y}$.
\item The map 
\begin{eqnarray*}
\Hom_{\Ay{A}{Y}}(\Diff_A \Ay{A}{Y},U) & \to & \Der_A(\Ay{A}{y},U) \\
 \beta & \mapsto & \beta \circ d
\end{eqnarray*}
is an isomorphism (natural in $U$) of color DG modules over $\Ay{A}{Y}$
with inverse given by
$$\vartheta \mapsto \left(\sum_{y \in Y} a_ydy \mapsto 
  \sum_{y \in Y}(-1)^{|\vartheta||a_y|}\eps(\vartheta,a_y)a_y\vartheta(y)\right).$$
\end{enumerate}
\end{proposition}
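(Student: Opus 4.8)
The plan is to construct the module of K\"ahler differentials $\Diff_A\Ay{A}{Y}$ explicitly as a free $\Ay{A}{Y}^\natural$-module on a basis $dY = \{dy \mid y \in Y\}$, where each $dy$ inherits the internal, homological, and group gradings of $y$, and then to equip it with a differential making the universal derivation $d$ a chain map. Since the excerpt has already established (in the discussion preceding the proposition) that an $A$-linear color derivation out of $\Ay{A}{Y}$ is uniquely determined by its values on $Y$, and conversely that any trihomogeneous function $Y \to U$ extends uniquely to such a derivation, the adjunction-type isomorphism in part (4) is essentially forced once the module is built; the real content is checking that the candidate differential is well-defined and squares to zero, and that the derivation $d$ defined by $d(y) = dy$ is a chain derivation of degree $(0,e_G)$.

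Concretely, I would first \emph{define} $\Diff_A\Ay{A}{Y}$ as the free $\Ay{A}{Y}^\natural$-module on symbols $dy$, declaring $\gdeg{dy} = \gdeg{y}$, $|dy| = |y|$, and $\deg(dy) = \deg(y)$. I then define the universal derivation $d : \Ay{A}{Y} \to \Diff_A\Ay{A}{Y}$ as the unique $A$-linear color derivation of degree $(0,e_G)$ sending each $y \mapsto dy$, using the extension result already recorded in the text. The differential on $\Diff_A\Ay{A}{Y}$ is then \emph{forced} by the requirement in part (3) that $d$ commute with $\del$ together with $\Ay{A}{Y}^\natural$-linearity: one must have $\del(dy) = d(\del(y))$, and this is extended by the Leibniz rule $\del(b\,(dy)) = \del(b)(dy) + (-1)^{|b|}b\,d(\del y)$ to all of $\Diff_A\Ay{A}{Y}$. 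Here $d(\del y)$ makes sense because $\del(y) \in \Ay{A}{Y}$ and $d$ is already defined on that algebra.

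Next I would verify the three bookkeeping properties. That the module is semi-free with basis $dY$ is immediate from the construction, giving (1); that $d(y) = dy$ is the definition, giving (2); and (3) is the defining formula for the differential. The genuine obstacle is showing $\del^2 = 0$ on $\Diff_A\Ay{A}{Y}$. On the generators this reduces to $\del(d(\del y)) = d(\del(\del y)) = d(0) = 0$, which requires knowing that $d$ commutes with $\del$ on all of $\Ay{A}{Y}$ (not merely on $Y$) --- this is the point where one invokes that $d$ is a chain derivation and that $\del^2 = 0$ holds in $\Ay{A}{Y}$, propagated through the color Leibniz rule; the sign and color coefficients $\eps(\cdot,\cdot)$ must be tracked carefully, and this is exactly where the hypotheses $\eps(\alpha,\alpha) = 1$ and the divided-power compatibility $D(y^{(i)}) = D(y)y^{(i-1)}$ do their work.

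Finally, for part (4), I would check that $\beta \mapsto \beta \circ d$ and the proposed inverse $\vartheta \mapsto \bigl(\sum_y a_y\,dy \mapsto \sum_y (-1)^{|\vartheta||a_y|}\eps(\vartheta,a_y)a_y\vartheta(y)\bigr)$ are mutually inverse, $\Ay{A}{Y}$-linear, and compatible with differentials and all three gradings. The universal property established before the proposition shows that $\beta \mapsto \beta \circ d$ lands in $\Der_A(\Ay{A}{Y},U)$ and is a bijection, since a homomorphism $\beta$ out of a free module is determined by its values on the basis $dY$, and those values correspond exactly to a trihomogeneous function $Y \to U$, i.e. to a derivation. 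Checking that the correspondence is a chain map of color DG modules is then routine sign-chasing using Definition \ref{def:colorLin} and property (3); naturality in $U$ is clear. Because the construction is essentially the standard one (adapting \cite[Proposition 6.2.3]{IFR}), and the extension/uniqueness of derivations has already been set up, the only step demanding real care is the color- and sign-consistent verification of $\del^2 = 0$ together with the claim that $d$ is a chain map in degree $(0,e_G)$.
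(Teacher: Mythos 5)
Your proposal is correct and follows essentially the same route as the paper, which omits its own proof on the grounds that it is ``a straightforward adaptation of \cite[Proposition 6.2.3]{IFR}'': your construction of $\Diff_A\Ay{A}{Y}$ as the free $\Ay{A}{Y}^\natural$-module on $dY$, with the differential forced by the chain-derivation condition, the reduction of $\del^2=0$ to the fact that $\del d - d\del$ is a derivation vanishing on $Y$ (hence zero by uniqueness of extensions), and the free-basis argument for the adjunction in (4), is precisely that adaptation, with the color bookkeeping handled where you indicate. You also correctly read item (3) as $\del(b\,(dy)) = \del(b)(dy) + (-1)^{|b|}b\,d(\del(y))$, fixing the evident typo in the statement.
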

Using Propositions \ref{prop:semiFreeLift} and \ref{prop:homPreservesQuism}
one therefore obtains the following corollary.
\begin{corollary} \label{cor:derQuism}
If $U \to V$ is a (surjective) quasi-isomorphism of color DG modules over
$\Ay{A}{Y}$, then so is the induced map
$\Der_A(\Ay{A}{Y},U) \to \Der_A(\Ay{A}{Y},V)$.
\end{corollary}

We next wish to construct derivations corresponding to the variables $Y$
added in the extension $\Ay{A}{Y}$.  We continue to follow the treatment
in \cite[Section 6]{IFR}.

\begin{construction} \label{const:indec}
Let $\Ay{A}{Y}$ be a semi-free extension of $A$ and fix a total order of
the variables $Y$.  Let $J$ denote the kernel of the morphism $A \to B = H_0(\Ay{A}{Y})$.  Note that
we consider $B$ as a color DG algebra with trivial differential.  Let
$Y^{(\geq 2)}$ be the set of normal monomials that are decomposable;
that is, the variables appearing in the monomial are sorted in
increasing order, and their word length in the $Y$ variables is two or more.
Then $A + JY + AY^{(\geq 2)}$ is a DG $A$-submodule of $\Ay{A}{Y}$.

The \emph{complex of indecomposables of the extension $\Ay{A}{Y}$} is defined
to be the quotient complex $\Ay{A}{Y}/(A + JY + AY^{(\geq 2)})$, and it is
denoted $\Ind_A(\Ay{A}{Y})$.  It is a DG module over $\Ay{A}{Y}$ which
is also a complex of free $B$-modules with basis $Y_n$ in homological
degree $n$, where $Y_n$ denotes the elements of $Y$ of homological degree $n$.
We denote by $\pi$ the projection $\Ay{A}{Y} \to \Ind_A(\Ay{A}{Y})$.
\end{construction}

One may use the complex of indecomposables to define derivations
using the following lemma.

\begin{lemma} \label{lem:derivExist}
Let $V$ be a color DG module over $B$, let $U$ be a color DG module
over $A$ with $U_i = 0$ for $i < 0$, and suppose that
$\beta : U \to V$ is a surjective quasi-isomorphism.

For each $B$-linear map $\xi : \Ind_A(\Ay{A}{Y}) \to V$
of degree $(-n,\sigma)$, there exists
an $A$-linear chain derivation $\vartheta : \Ay{A}{Y} \to U$ of the same
degree such that $\beta\vartheta = \xi\pi$, and any two such derivations
are homotopic by a homotopy that is itself an $A$-linear derivation.

Furthermore, if $\{u_y\} \subseteq U_0$ is a collection of elements such that 
$\beta(u_y) = \xi(y)$ for all $y \in Y_n$ then there is a chain derivation 
$\vartheta$ satisfying $\vartheta(y) = u_y$ for all $y \in Y_n$.
\end{lemma}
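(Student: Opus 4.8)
The plan is to build $\vartheta$ by prescribing its values on the variables $Y$ and extending by the color Leibniz rule, invoking the principle recorded just before Proposition \ref{prop:Diff} that every trihomogeneous function $Y \to U$ extends uniquely to an $A$-linear color derivation $\Ay{A}{Y} \to U$. Writing $|\vartheta| = -n$, the requirement that $\vartheta$ be a chain derivation reads $\del^U \vartheta = (-1)^n \vartheta\del$, and I will check the identity $\beta\vartheta = \xi\pi$ only on $Y$: both $\beta\vartheta$ and $\xi\pi$ are $A$-linear color derivations into $V$ (for $\xi\pi$ this is exactly the defining property of $\Ind_A\Ay{A}{Y}$, and here I use that $\xi$ is a morphism of complexes, so that $\xi\pi$ is in fact a cycle in $\Der_A(\Ay{A}{Y},V)$), hence each is determined by its restriction to $Y$. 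I construct $\vartheta$ by induction on the homological degree $k$ of a variable $y \in Y_k$.

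Since $\beta$ is surjective and $U_i = 0$ for $i<0$, also $V_i=0$ for $i<0$. For $k < n$ one is forced to take $\vartheta(y)=0$, as $\vartheta(y)$ would lie in $U_{k-n}=0$; thus $\vartheta$ vanishes on $\Ay{A}{Y_{<n}}$, while $\xi(y)\in V_{k-n}=0$, so $\beta\vartheta(y)=\xi(y)$ holds trivially. For $k=n$ the value $\vartheta(y)\in U_0$ must satisfy $\beta\vartheta(y)=\xi(y)$, while the chain condition $\del^U\vartheta(y)=(-1)^n\vartheta(\del y)$ is automatic, because $\del y$ involves only variables of degree $<n$ (so $\vartheta(\del y)=0$) and $\del^U\vartheta(y)\in U_{-1}=0$. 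Surjectivity of $\beta$ supplies such a value; when the elements $u_y$ are prescribed I simply set $\vartheta(y)=u_y$, which disposes of the \emph{Furthermore} clause at the same time.

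For the inductive step $k>n$, suppose $\vartheta$ is already a chain derivation on $\Ay{A}{Y_{<k}}$ with $\beta\vartheta=\xi\pi$ there, and let $y\in Y_k$. The element $w=\vartheta(\del y)\in U_{k-n-1}$ is then determined; one checks it is a cycle (from $\del^U\vartheta=(-1)^n\vartheta\del$ on $\Ay{A}{Y_{<k}}$ together with $\del^2=0$) and that $\beta w=\xi\pi(\del y)$. I seek $\vartheta(y)\in U_{k-n}$ with $\del^U\vartheta(y)=(-1)^n w$ and $\beta\vartheta(y)=\xi(y)$. Picking any preimage $u'$ of $\xi(y)$ under $\beta$, the cycle $\del^U u'-(-1)^n w$ lies in $\ker\beta$, since its image under $\beta$ is $\del^V\xi(y)-(-1)^n\xi\pi(\del y)$, which vanishes exactly because $\xi$ commutes with the differentials. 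As $\beta$ is a quasi-isomorphism, $\ker\beta$ is an acyclic complex bounded below, so $\del^U u'-(-1)^n w=\del^U c$ for some $c\in\ker\beta$, and $\vartheta(y):=u'-c$ has both required properties. This lifting step is the technical heart of the proof; the point that needs care is verifying that the obstruction $\del^U u'-(-1)^n w$ is a cycle landing in the acyclic subcomplex $\ker\beta$, with all signs and $\eps$-factors tracked as in Section \ref{sec:CDGA}.

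Finally, for uniqueness up to derivation homotopy, if $\vartheta,\vartheta'$ are two lifts then $\vartheta-\vartheta'$ is a cycle in $\Der_A(\Ay{A}{Y},\ker\beta)$. Because $\ker\beta$ is acyclic and bounded below and $\Diff_A\Ay{A}{Y}$ is semi-free, Corollary \ref{cor:derQuism} (applied to the quasi-isomorphism $\ker\beta\to 0$) shows this complex is acyclic, so $\vartheta-\vartheta'$ is a boundary $\del(s)$ in $\Der_A(\Ay{A}{Y},U)$; the derivation $s$ is the required $A$-linear homotopy. The same acyclicity, combined with the fact that a surjective quasi-isomorphism lifts cycles to cycles, gives a quick abstract proof of existence as well (lifting the cycle $\xi\pi$ along the surjective quasi-isomorphism $\Der_A(\Ay{A}{Y},\beta)$), which I would record as a remark.
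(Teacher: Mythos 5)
Your proof is correct, but your existence argument takes a genuinely different route from the paper's. The paper's proof is entirely abstract: it observes that the universal property of $D = \Diff_A\Ay{A}{Y}$ (Proposition \ref{prop:Diff}) together with the projection $\pi$ yields an isomorphism $B \otimes_{\Ay{A}{Y}} D \cong \Ind_A(\Ay{A}{Y})$, then combines this with the adjunction of Proposition \ref{prop:adjoint} and Corollary \ref{cor:derQuism} to produce a single surjective quasi-isomorphism
$$\Der_A(\Ay{A}{Y},U) \xra{\simeq} \Hom_B(\Ind_A(\Ay{A}{Y}),V),$$
after which existence of $\vartheta$ is just the fact that a surjective quasi-isomorphism lifts cycles to cycles --- precisely the argument you relegate to your closing remark --- and the ``Furthermore'' clause is handled by noting that the component $\vartheta_n$ may be chosen freely subject to $(\beta_0\vartheta_n)(y) = (\xi_n\pi_n)(y)$ on $Y_n$. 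Your degree-by-degree induction (forced vanishing on $Y_{<n}$, arbitrary $\beta$-preimages or the prescribed $u_y$ in degree $n$, and the obstruction-killing step for $k>n$ using acyclicity of $\ker\beta$) replaces that machinery with an elementary construction; what it buys is that the Furthermore clause becomes automatic rather than needing a separate observation, and that the roles of the two hypotheses (surjectivity for choosing preimages, quasi-isomorphism for acyclicity of $\ker\beta$) are completely explicit. Your uniqueness argument is in substance identical to the paper's: the kernel of the paper's surjective quasi-isomorphism is exactly $\Der_A(\Ay{A}{Y},\ker\beta)$, whose acyclicity is what both arguments use. Two small points you gloss but should state: first, prescribing values on $Y$ and extending by the color Leibniz rule produces a derivation whose chain condition need only be checked on $A \cup Y$ because $\del^U\vartheta - (-1)^{n}\vartheta\del$ is itself an $A$-linear derivation (this requires the compatibility with divided powers, $\vartheta(y^{(i)}) = \vartheta(y)y^{(i-1)}$, in the verification); second, the preimages $u'$ and $c$ must be chosen trihomogeneous, which is possible since $\beta$, $\xi$ and $\del$ are homogeneous with respect to all three gradings. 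Your reading that $\xi$ must be a morphism of complexes is the right one (and is forced: since $\pi$ is a surjective chain map, $\beta\vartheta = \xi\pi$ with $\vartheta$ a chain derivation already implies $\xi$ commutes with the differentials); the paper needs the same reading to regard $\xi$ as a cycle of $\Hom_B(\Ind_A(\Ay{A}{Y}),V)$.
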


\begin{proof}
Set $D = \Diff_A \Ay{A}{Y}$.  Since the projection $\pi$ is an
$A$-linear chain derivation, by Proposition \ref{prop:Diff} there is
an induced $\Ay{A}{Y}$-linear morphism $D \to \Ind_A(\Ay{A}{Y})$
such that $\pi(dy) = \pi(y) = y$ for all $y \in Y$.  This in turn induces
a morphism of $B$-complexes
$B \otimes_{\Ay{A}{Y}} D \to \Ind_A(\Ay{A}{Y})$.
This map is bijective on a basis of each and is hence an isomorphism.

Combining this map with the adjoint isomorphism in Proposition 
\ref{prop:adjoint}, the universal property of $D$, and Corollary 
\ref{cor:derQuism} gives a surjective quasi-isomorphism
$$\Der_A(\Ay{A}{Y},U) \xra{\simeq} \Hom_B(\Ind_A(\Ay{A}{Y}),V).$$
Therefore given $\xi$ as in our hypothesis, there exists a chain derivation
$\vartheta : \Ay{A}{Y} \to U$ that satisfies $\beta\vartheta = \xi\pi$
as claimed.  Any two choices must necessarily differ by a boundary of
$\Der_A(\Ay{A}{Y},U)$, i.e.\ a homotopy which itself is an $A$-linear derivation.

For the last claim, suppose that $\{u_y\} \subseteq U_0$ is as in the statement of the lemma,
i.e. we have chosen a lifting $u_y$ of each $\xi(y)$ along $\beta$.  Since
$U_i = 0$ for $i < 0$, we are free to choose $\vartheta_n$ to be any map
that satisfies $(\beta_0\vartheta_n)(y) = (\xi_n\pi_n)(y)$ for all $y \in Y_n$,
with setting $\vartheta_n(y) = u_y$ one such choice.
\end{proof}

We finish this section by showing that if the cycles in the construction
of $\Ay{A}{Y}$ are chosen minimally (in a sense made more precise in the following definition),
then the underlying complex is minimal.

\begin{definition} \label{def:acyclicClosure}
Let $A$ be a nonnegatively graded color DG $\kk$-algebra such that $R = A_0$ is a quotient of
a skew polynomial ring by a sequence of homogeneous normal elements, and suppose each right
$R$-module $H_n(A)$ is finitely generated. Let $A \to B$ be a surjective map of color
DG algebras with $B$ concentrated in degree zero (i.e. $B$ is a color commutative 
$\kk$-algebra) with $J = \ker(A \to B)$ as before, such that $J_0$ is generated by a sequence of normal elements.

Construction \ref{constr:AcyClos} can be applied to produce a resolution $\Ay{A}{Y}$ of $B$ over $A$. If $\Ay{A}{Y}$ satisfies the following two conditions:
\begin{enumerate}
\item $\del(Y_1)$ minimally generates $J_0$ modulo $\del_1(A_1)$, and
\item $\{\cls{\del(y)}~|~y\in Y_{n+1}\}$ minimally generates $H_n(\Ay{A}{Y_{\leq n}})$ for $n \geq 1$,
\end{enumerate}
then $\Ay{A}{Y}$ is called an \emph{acyclic closure of $B$ over $A$.}
\end{definition}

When naming the variables we adjoin in an acyclic closure, we do so using the natural
numbers such that $|y_i| \leq |y_j|$ when $i < j$.  It follows that we may totally
order the normal monomials of $Y$ using the graded lexicographic order induced by this
total order of $Y$.

We will denote by $y^{(I)}$ the normal monomial $y_1^{(i_1)}\cdots y_q^{(i_q)}$
when $I = (i_1,\dots,i_q)$ is a finite indexing sequence.  Any finite indexing sequence
may be padded at the end with zeroes in order to make comparisons using graded
lexicographic order.  The following lemma appears in \cite[Lemma 6.3.3]{IFR}.
Its proof is the same as the one given in \emph{loc.\ cit}, except one uses Lemma
\ref{lem:derivExist} above, which adapted the commutative setting to the color commutative one.

\begin{lemma} \label{lem:UpTriang}
Let $\Ay{A}{Y}$ be an acyclic closure of $\kk$ over $A$.  Then there exist $A$-linear
chain derivations $\vartheta_i : \Ay{A}{Y} \to \Ay{A}{Y}$ for $i \geq 1$ such that:
\begin{enumerate}
\item $$\vartheta_i(y_h) = \begin{cases} 0 &\text{for}~|y_h| \leq |y_i|~\text{and}~h \neq i\\
                                    1 &\text{for}~h = i.\end{cases}$$
\item \label{lem:UpTriang2}
For a finite indexing sequence $I = (i_1,\dots,i_q)$, let $\vartheta^I$
denote the map $\vartheta_q^{i_q}\cdots\vartheta_1^{i_1}$, and let $H \leq I$ be another
indexing sequence.  Then one has
$$\vartheta^I(y^{(H)}) = \begin{cases} 0 &\text{for}~H < I \\
                                       1 &\text{for}~H = I.\end{cases}$$
\item Each $\vartheta_i$ is unique up to an $A$-linear homotopy which is a color
derivation.
\end{enumerate}
\end{lemma}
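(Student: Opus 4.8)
The plan is to follow the strategy of \cite[Lemma 6.3.3]{IFR}, using Lemma \ref{lem:derivExist} as the color-commutative replacement for the lifting of derivations along a surjective quasi-isomorphism. Throughout, write $\beta : \Ay{A}{Y} \to \kk$ for the augmentation; since $\Ay{A}{Y}$ is an acyclic closure of $\kk$ over $A$, this is a surjective quasi-isomorphism, and $\kk$ plays the role of both $B$ and $V$ in Lemma \ref{lem:derivExist}. To construct the derivations of part (1), recall from Construction \ref{const:indec} that $\Ind_A(\Ay{A}{Y})$ is a complex of free $\kk$-modules with basis $Y_n$ in homological degree $n$. For each $i$ I would take $\xi_i : \Ind_A(\Ay{A}{Y}) \to \kk$ to be the $\kk$-linear map of degree $(-|y_i|,\gdeg{y_i}^{-1})$ dual to $y_i$, so that $\xi_i(y_i) = 1$ and $\xi_i(y_h) = 0$ for $h \neq i$. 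Lemma \ref{lem:derivExist} then produces an $A$-linear chain derivation $\vartheta_i$ with $\beta\vartheta_i = \xi_i\pi$. As $\vartheta_i$ has homological degree $-|y_i|$ and $\Ay{A}{Y}$ vanishes in negative homological degrees, one gets $\vartheta_i(y_h) = 0$ automatically whenever $|y_h| < |y_i|$; for the variables with $|y_h| = |y_i|$, the ``furthermore'' clause of Lemma \ref{lem:derivExist} lets us prescribe $\vartheta_i$ on $Y_{|y_i|}$ by choosing the lifts $u_{y_i} = 1$ and $u_{y_h} = 0$ for $h \neq i$, which are valid since $\beta(1) = 1 = \xi_i(y_i)$ and $\beta(0) = 0 = \xi_i(y_h)$. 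This yields exactly the values recorded in part (1).

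For the triangularity of part (2), the cheap half is a degree count: the composite $\vartheta^I$ is homogeneous of homological degree $-|y^{(I)}| = -\sum_j i_j|y_j|$, so $\vartheta^I(y^{(H)})$ lies in homological degree $|y^{(H)}| - |y^{(I)}|$. Since $\Ay{A}{Y}$ is nonnegatively graded, this forces $\vartheta^I(y^{(H)}) = 0$ whenever $|y^{(H)}| < |y^{(I)}|$, which handles every $H < I$ of strictly smaller total homological degree. The substance is the case $|y^{(H)}| = |y^{(I)}|$, where $\vartheta^I(y^{(H)})$ lands in homological degree $0$ and must be shown to equal $1$ for $H = I$ and $0$ for $H <_{\mathrm{lex}} I$. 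Here I would apply the factors $\vartheta_1^{i_1},\vartheta_2^{i_2},\dots$ in order, expanding via the derivation rule together with the divided-power rule $\vartheta_j(y_j^{(m)}) = y_j^{(m-1)}$. Each $\vartheta_j$ contributes a main term, in which the power of $y_j$ is lowered, plus correction terms in which $\vartheta_j$ strikes a variable $y_h$ of strictly larger homological degree. Iterating the main terms peels $y^{(I)}$ down to $1$ along the diagonal; for the off-diagonal monomials and the correction terms I would argue by induction on the graded lexicographic order, exploiting that a surviving contribution to a nonzero scalar must match $y^{(I)}$ in \emph{both} the internal and the homological grading, so that terms of the wrong internal degree are discarded and the remainder is annihilated by the later factors of $\vartheta^I$.

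I expect this equal-degree bookkeeping to be the main obstacle: one must check that the signs and scalars introduced when commuting $\vartheta_j$ past divided-power monomials (governed by the bicharacter $\eps$ and the commutation rules for the variables $Y$) do not disturb the vanishing, and that the graded lexicographic order is genuinely compatible with the order in which the factors of $\vartheta^I$ act. This is precisely the point at which the internal grading is indispensable, as anticipated in the introductory remarks on minimality. Finally, part (3) is immediate from the uniqueness statement in Lemma \ref{lem:derivExist}: any two chain derivations lifting the same datum $\xi_i$ differ by a boundary of $\Der_A(\Ay{A}{Y},\Ay{A}{Y})$, that is, by an $A$-linear color derivation that is a chain homotopy.
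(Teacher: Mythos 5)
Your parts (1) and (3) are correct and are precisely the paper's argument: the paper proves this lemma by citing \cite[Lemma 6.3.3]{IFR} with Lemma \ref{lem:derivExist} substituted for its commutative counterpart, which is exactly what you do (lift the dual map $\xi_i$ on indecomposables through Lemma \ref{lem:derivExist}, prescribe the values on $Y_{|y_i|}$ via the ``furthermore'' clause, kill lower variables by a degree count, and get uniqueness from the uniqueness clause). Part (2), however, has a genuine gap, and it sits exactly where you flag ``the main obstacle.'' The mechanism you propose --- terms failing to match $y^{(I)}$ in internal and homological degree are discarded, and the remaining correction terms are annihilated by the later factors of $\vartheta^I$ --- cannot work, because there exist distinct normal monomials that agree in \emph{all three} gradings on which the evaluation is a nonzero scalar. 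Take $R=\kk[x_1,x_2]/(x_1x_2)$, so an acyclic closure begins with $\del(y_1)=\bar{x}_1$, $\del(y_2)=\bar{x}_2$, $\del(y_3)=\bar{x}_2y_1$. Trihomogeneity forces $\vartheta_1(y_3)\in\kk y_1\oplus\kk y_2$, and the chain condition gives $\del(\vartheta_1(y_3))=\pm\bar{x}_2$, hence $\vartheta_1(y_3)=\pm y_2$ and
\[
\vartheta_2\vartheta_1(y_3)=\pm 1\neq 0,
\]
even though $y_3$ and $y_1y_2$ have the same homological degree, the same internal degree, and the same $G$-degree. Note that here the correction term $\vartheta_1(y_3)$ is \emph{not} annihilated by the later factor $\vartheta_2$; it is evaluated to a unit.

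This computation shows that the truth of part (2) hinges on a point your proof never pins down: the ordering convention. If ``graded lexicographic'' is read by comparing exponent sequences at the smallest index where they differ, then $(0,0,1)<(1,1,0)$ and the display above refutes the statement; the lemma holds only under the convention of \emph{loc.\ cit.}, where sequences of equal homological degree are compared at the \emph{largest} differing index, which places the display in the range $H>I$ where nothing is asserted. No grading argument can substitute for this, since the two monomials above are indistinguishable by degrees. The argument that actually closes the induction peels off the \emph{top} variable rather than the bottom one: let $s$ be the largest index with $i_s\neq0$; then (i) each $\vartheta_j(y_h)$ has homological degree $<|y_h|$ and hence lies in $\Ay{A}{y_1,\dots,y_{h-1}}$, so the subalgebra $B:=\Ay{A}{y_1,\dots,y_{s-1}}$ is stable under all the $\vartheta_j$, and (ii) $\vartheta_s(B)=0$ by part (1) and the Leibniz rule. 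Writing $y^{(H)}=y^{(H')}y_s^{(h_s)}$ and $I'=(i_1,\dots,i_{s-1})$, fact (i) gives $\vartheta^{I'}(y^{(H)})=\vartheta^{I'}(y^{(H')})\,y_s^{(h_s)}+\sum_{m<h_s}C_m\,y_s^{(m)}$ with every $C_m\in B$; then fact (ii) and the divided-power rule show that $\vartheta_s^{i_s}$ kills all terms with $y_s$-exponent less than $i_s$, which yields the vanishing when $h_s<i_s$ and reduces the case $h_s=i_s$ to the shorter pair $H'\le I'$. This top-down reduction, not degree bookkeeping, is the missing idea in your sketch.
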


We now obtain the following result which was first proved by Gulliksen \cite{GulLev} and
Schoeller \cite{Scho} in the commutative case.

\begin{theorem}
Let $A$ be a color DG algebra such that $A_0 = R$ is a quotient of a skew polynomial
ring over the field $\kk$ by a sequence of homogeneous normal elements,
and suppose each right $R$-module $H_n(A)$ is finitely generated.  If
$\Ay{A}{Y}$ is an acyclic closure of $\kk$, then $\del(\Ay{A}{Y}) \subseteq J\Ay{A}{Y}$
where $J = \ker(A \to \kk)$.

In particular, if $R$ is as above, and if $\Ay{R}{Y}$ is an acyclic closure of $\kk$
over $R$, then $\Ay{R}{Y}$ is a minimal free resolution of $\kk$ over $R$.
\end{theorem}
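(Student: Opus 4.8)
The plan is to establish $\del(\Ay{A}{Y}) \subseteq J\Ay{A}{Y}$ by combining two ideas: first, that $J\Ay{A}{Y}$ is a two-sided ideal, which reduces the statement to the differentials of the variables and of $A$ itself; and second, that the triangular chain derivations $\vartheta_i$ of Lemma \ref{lem:UpTriang} let one extract a would-be unit coefficient of $\del(y)$ and run it into a contradiction with a simple degree count.

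First I would record the reduction. Since $\Ay{A}{Y}$ is free over $A$ on the normal monomials $y^{(H)}$ and $J$ is a two-sided ideal of $A$ that is stable under the normalizing automorphisms, one has $J\Ay{A}{Y} = \bigoplus_H Jy^{(H)}$, and this is a two-sided ideal of $\Ay{A}{Y}$ because the variables skew commute with $A$. Because $A \to \kk$ is a chain map and $\kk$ sits in homological degree $0$, one gets $\del^A(A_1) \subseteq J_0$ and $\del^A(A_{\ge 2}) \subseteq A_{\ge 1} \subseteq J$, hence $\del^A(A) \subseteq J$. Using the Leibniz rule together with $\del(y^{(i)}) = \del(y)y^{(i-1)}$, the full inclusion then follows as soon as one knows $\del(y) \in J\Ay{A}{Y}$ for every $y \in Y$.

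The heart of the matter is this last claim, which I would prove by contradiction. Fix a trihomogeneous $y \in Y$ of homological degree $n$ and write the cycle $z = \del(y) = \sum_H a_H y^{(H)}$ in the $A$-basis of normal monomials. A trihomogeneous coefficient $a_H$ lies outside $J$ only if $a_H \in \kk^*$, and since $\del$ preserves both the homological and the internal grading this forces $|y^{(H)}| = n-1$ and $\deg y^{(H)} = \deg y$. Assuming some $a_H \notin J$, choose $H_0$ maximal in the graded-lexicographic order among such indices and apply the composite derivation $\vartheta^{H_0}$. By the triangular property of Lemma \ref{lem:UpTriang}\eqref{lem:UpTriang2}, the surviving ``bad'' terms $H < H_0$ are sent to $0$ and $H = H_0$ to $1$, while every remaining term has $a_H \in J$ and so contributes to $J\Ay{A}{Y}$; thus $\vartheta^{H_0}(z) \equiv \pm a_{H_0} \not\equiv 0 \pmod{J\Ay{A}{Y}}$. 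On the other hand $\vartheta^{H_0}$ is, up to sign, a chain map, so $\vartheta^{H_0}(z) = \pm\del(\vartheta^{H_0}(y))$; a degree count shows $\vartheta^{H_0}(y)$ is trihomogeneous of homological degree $1$ and internal degree $0$, hence lies in $A_{1,0}$ since every variable of $Y_1$ has positive internal degree, and $\del^A(A_{1,0}) \subseteq J_0 \cap R_0 = 0$ because $R$ is connected graded. Therefore $\vartheta^{H_0}(z) = 0$, contradicting $\vartheta^{H_0}(z) \notin J\Ay{A}{Y}$.

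Finally, the second statement is immediate: Construction \ref{constr:AcyClos} already gives that $\Ay{R}{Y}$ is a free resolution of $\kk$, and the inclusion $\del(\Ay{R}{Y}) \subseteq J\Ay{R}{Y}$ just proved says precisely that $\kk \otimes_R \Ay{R}{Y}$ has zero differential, i.e. the resolution is minimal. I expect the main obstacle to be the careful bookkeeping of the three gradings: the whole argument hinges on the facts that a bad coefficient must be a scalar of the same internal degree as $y$, and that after applying $\vartheta^{H_0}$ the surviving element lands in homological degree $1$ and internal degree $0$, where connectedness of $R$ and the chain-map property of $A \to \kk$ kill its differential. Keeping the triangular property aligned with the \emph{maximal} bad index, so that no uncontrolled terms survive modulo $J\Ay{A}{Y}$, is the other delicate point.
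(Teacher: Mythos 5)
Your proof is correct, and it rests on the same key ingredient as the paper's: the triangular chain derivations of Lemma \ref{lem:UpTriang}, applied to the monomial expansion of a differential so as to extract a would-be unit coefficient and force a contradiction. The execution, however, differs at three genuine points. First, the paper runs the argument on $\del(b)$ for an \emph{arbitrary} trihomogeneous $b \in \Ay{A}{Y}$, so it never needs your preliminary reduction (Leibniz rule plus the two-sided ideal $J\Ay{A}{Y}$) to the case $z = \del(y)$ with $y \in Y$; your reduction costs a paragraph but pins down the homological and internal degree of $z$, which you exploit at the end. Second, the extremal choices are opposite: the paper takes the offending index $I$ \emph{least} in the graded lexicographic order and computes modulo $R_+\Ay{A}{Y}$, whereas you take $H_0$ \emph{maximal} and compute modulo $J\Ay{A}{Y}$. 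Your choice is the more robust one: with $H_0$ maximal, every term in the expansion is handled by exactly what Lemma \ref{lem:UpTriang}\eqref{lem:UpTriang2} asserts --- terms with $H < H_0$ are annihilated, the term at $H_0$ yields a unit, and all remaining terms have coefficients in $J$, hence land in $J\Ay{A}{Y}$ by $A$-linearity. With $I$ least, the paper's displayed congruence additionally needs $a_H\vartheta^I(y^{(H)}) \in R_+\Ay{A}{Y}$ for the indices $H > I$; this is automatic when $a_H$ is not a unit or when $|y^{(H)}| < |b|-1$, but for a possible second \emph{bad} index $H > I$ (same homological and internal degree, unit coefficient) it amounts to a reflected triangularity $\vartheta^I(y^{(H)}) = 0$ for $H > I$, which the lemma does not state; your maximality is precisely what keeps the whole argument within the scope of the lemma, so this is not a cosmetic difference. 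Third, the final contradictions differ: the paper invokes acyclicity in homological degree zero (since $\Ay{A}{Y}$ resolves $\kk$, one has $\del_1(\Ay{A}{Y}_1) = R_+$, so $\pm\del_1(\vartheta^I(b))$ cannot be a unit), whereas you use the internal grading and connectedness ($\vartheta^{H_0}(y) \in A_{1,0}$ because the variables of $Y_1$ have positive internal degree, and $\del(A_{1,0}) \subseteq J_0 \cap R_0 = 0$). Both contradictions are valid; yours trades the exactness input at that step for a degree count, which is what your initial reduction to a single variable $y$ makes possible.
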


\begin{proof}
Let $b$ be a trihomogeneous element of $\Ay{A}{Y}$, and write
$\del(b) = \sum_H a_H y^{(H)}$, where each $y^{(H)}$ is a normal monomial.
Proving that $\del(\Ay{A}{Y})$ is a subset of $J\Ay{A}{Y}$ is
equivalent to showing that if $|y^{(H)}| = |b| - 1$ and $a_H \neq 0$, then $\deg(a_H) > 0$.  Suppose
that there exists an index $I$ such that $|y^{(I)}| = |b| - 1$, $a_I \neq 0$,
$\deg(a_I) = 0$, and that $I$ is chosen to be least in the graded lexicographic order
with respect to this property.

Applying the chain derivation $\vartheta^I$ from Lemma \ref{lem:UpTriang} to 
$\del(b)$ we obtain
$$\pm \del_1(\vartheta^I(b)) = \vartheta^I(\del(b)) = a_I + \sum_{H > I}a_H\vartheta^I(y^{(H)}) \equiv a_I \mod R_+\Ay{A}{Y}$$
where the second equality follows from
Lemma \ref{lem:UpTriang}\eqref{lem:UpTriang2}.
Therefore $\del_1(\vartheta^I(b)) \not\in R_+.$  However, since $A_0 = R$ and 
$\Ay{A}{Y}$ is an acyclic closure of $\kk$, we must have that
$\del_1(\Ay{A}{Y}) = R_+$, giving us our contradiction.
\end{proof}

\section{The category of color commutative DG algebras with divided powers}
\label{sec:CDGADP}
In this section we prove the uniqueness of acyclic closures in the category 
of color commutative DG algebras with divided powers.
We follow the development that appears in \cite{GulLev} for the case
of commutative rings.
\begin{definition}
Let $A$ be a color DG $R$-algebra with respect to a skew bicharacter $\e : G \times G \to \kk^*$,
and with differential $\partial$. We say that $A$ is a
\emph{color DG algebra with divided powers} if to every trihomogeneous
element $x\in A$ of even positive homological degree there is associated a sequence of elements 
$x^{(k)}\in A$ ($k=0,1,2,\ldots$) satisfying:
\begin{enumerate}
\item $x^{(0)}=1, x^{(1)}=x, |x^{(k)}|=k|x|, \gdeg{x^{(k)}}=(\gdeg{x})^k,$
\item $x^{(h)}x^{(k)}=\binom{h+k}{h}x^{(h+k)},$
\item if $(|x|,\gdeg{x})=(|y|,\gdeg{y})$ then
\[
(x+y)^{(k)}=\sum_{i+j=k}x^{(i)}y^{(j)},
\]
\item for $k\geq 2$ and $y$ trihomogeneous
\[
(xy)^{(k)}=
\begin{cases}
0&\mathrm{if}\;|x|\;\mathrm{and}\;|y|\;\mathrm{are\;odd}\\
\e(y,x)^{\binom{k}{2}}x^ky^{(k)}&\mathrm{if}\;|x|\;\mathrm{is\;even\;and}\;|y|\;\mathrm{is\;even\;and\;positive},
\end{cases}
\]
where $\binom{k}{2}=0$ if $k<2$,
\item $$(x^{(h)})^{(k)}=\bbinom{h}{k}x^{(hk)},\quad\mathrm{where}\;\bbinom{h}{k}=\frac{(hk)!}{k!(h!)^k}$$
\item for $k\geq1$
\[
\partial(x^{(k)})=(\partial (x))x^{(k-1)}.
\]
\end{enumerate}
The scalars $\binom{h}{k}$ and $\bbinom{h}{k}$ are computed in $\mathbb{Z}$ and then reduced modulo the characteristic of $\kk$. 
\end{definition}
It remains to notice that $\bbinom{h}{k}$ is an integer, this follows since $\bbinom{h}{0}=1$ and for $k\geq1$ from the recursive relation
\[
\bbinom{h}{k}=\bbinom{h}{k-1}\binom{(k-1)h}{h-1}
\]

\begin{definition}\label{defn:morph}
Let $A$ and $B$ be color DG $R$-algebras with divided powers, with the same skew bicharacter $\e$. A map 
$f:A\rightarrow B$ is a \emph{morphism of color DG algebras with divided powers} if it is a trihomogeneous 
morphism of color DG $R$-algebras, such that
\[
f(x^{(k)})=f(x)^{(k)},\quad\mathrm{for}\;k\geq0\;\mathrm{and}\;x\in A, |x|\;\mathrm{even\;and\;positive}.
\]
\end{definition}

\begin{definition}
The category $\cdgardp$ is the category with objects color DG algebras with divided powers with
skew bicharacter $\eps$, and morphisms as defined in Definition \ref{defn:morph}.
\end{definition}

\begin{lemma}\label{lem:extension}
Let
\[
\xymatrix{
\Ay{A}{Y} \ar@{.>}[r]^{\tilde{f}} & C\\
A \ar[u]^{g} \ar[r]^{f} & B \ar[u]_{h}\\
}
\]
be a diagram in $\cdgardp$ with $C$ acyclic, with $g$ and $h$ inclusions.
Then there exists a morphism $\tilde{f}:\Ay{A}{Y}\rightarrow C$ in $\cdgardp$ making the
diagram commutative.
\end{lemma}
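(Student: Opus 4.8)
The plan is to define $\tilde{f}$ first on $A$, where commutativity of the diagram forces $\tilde{f}|_A = h\circ f$, and then to extend it over the variables $Y$ one homological degree at a time, using the acyclicity of $C$ to solve the requisite lifting problems. Throughout I would exploit that $\Ay{A}{Y}$ is \emph{free} as a color DG algebra with divided powers over $A$ on the set $Y$ (exterior variables in odd homological degrees, divided power variables in even degrees), as produced by Constructions \ref{cons:adjVarOdd} and \ref{cons:adjVarEven}. By the universal property of such free objects, a morphism in $\cdgardp$ out of $\Ay{A}{Y}$ extending a fixed map on $A$ is determined by, and may be freely prescribed through, a trihomogeneous assignment $y \mapsto \tilde{f}(y) \in C$, where odd variables go to odd elements and even variables to even elements (which automatically carry divided powers in $C$). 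Thus the entire problem reduces to choosing suitable trihomogeneous images for the variables.

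Concretely, I would induct on the homological degree $n$. For $y \in Y_n$ the element $\del(y)$ has homological degree $n-1$ and hence lies in the subalgebra $\Ay{A}{Y_{\leq n-1}}$ generated by $A$ and the variables of degree strictly less than $n$; in particular no degree-$n$ variable occurs in any $\del(y)$, so all of $Y_n$ can be treated simultaneously. Assume inductively that $\tilde{f}$ has been built as a morphism in $\cdgardp$ on $\Ay{A}{Y_{\leq n-1}}$ agreeing with $h\circ f$ on $A$. For each $y \in Y_n$, the element $\tilde{f}(\del(y))$ is a trihomogeneous cycle in $C$ of homological degree $n-1$, since $\del_C\tilde{f}(\del(y)) = \tilde{f}(\del^2(y)) = 0$ by the inductive chain-map property.

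Now I split into two cases. If $n \geq 2$, then $\tilde{f}(\del(y))$ is a cycle of positive degree in the acyclic algebra $C$, hence a boundary; choosing a preimage $w$ that is trihomogeneous of the correct internal degree and color (possible because $\del_C$ preserves all three gradings, so one may pass to the appropriate graded component of any preimage), I set $\tilde{f}(y) := w$. If $n = 1$, then $\del(y) \in A_0$ is a boundary in $\Ay{A}{Y}$ and so represents the zero class in $H_0(\Ay{A}{Y}) = B$; since $f$ is the structure map of the resolution, $f(\del(y)) = 0$, whence $\tilde{f}(\del(y)) = h\,f(\del(y)) = 0$, and I set $\tilde{f}(y) := 0$. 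In either case $\del_C\tilde{f}(y) = \tilde{f}(\del(y))$, so the extension of $\tilde{f}$ to $\Ay{A}{Y_{\leq n}}$ is a chain map on generators. Extending multiplicatively and through divided powers by freeness and passing to the colimit over $n$ (the union $\Ay{A}{Y} = \varinjlim \Ay{A}{Y_{\leq n}}$, each $Y_n$ finite) yields the desired $\tilde{f}\colon \Ay{A}{Y}\to C$.

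The point requiring the most care, and where I expect the real work to lie, is verifying that this generator-wise assignment genuinely produces a \emph{morphism in $\cdgardp$}: that it is simultaneously multiplicative, compatible with the graded color-commutative relations, compatible with all the divided-power axioms, and a chain map on all of $\Ay{A}{Y}$ rather than merely on $Y$. The algebra and divided-power compatibility are supplied by the universal property of the free color divided-power algebra (following \cite{GulLev} in the commutative case). The global chain-map property then follows formally from the graded Leibniz rule together with the differential axiom $\del(x^{(k)}) = \del(x)\,x^{(k-1)}$, once it is known on the generators, because both $\Ay{A}{Y}$ and $C$ satisfy that axiom. A minor but essential point is that every chosen preimage $w$ can be taken trihomogeneous of color $\gdeg{y}$, which is exactly where the hypothesis that $C$ is a color DG algebra with differential homogeneous of color $e_G$ is used.
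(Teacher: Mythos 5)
Your overall strategy---induct on homological degree, use acyclicity of $C$ to lift the images of $\del(y)$, and extend multiplicatively and through divided powers by freeness---is essentially the paper's own proof, which reduces by induction to a single variable $y$ with $\del(y)=z$, chooses $c\in C$ with $\del^C(c)=hf(z)$ (necessarily of the same tridegree), and sets $\tilde f(ay^{(k)})=f(a)c^{(k)}$. Your treatment of the case $n\geq 2$, including the observation that a preimage may be taken trihomogeneous because $\del^C$ preserves all three gradings, is correct.

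However, your $n=1$ case contains a genuine error. You assume that $H_0(\Ay{A}{Y})=B$ and that $f$ is ``the structure map of the resolution,'' conclude $f(\del(y))=0$, and set $\tilde f(y):=0$. Neither assumption is a hypothesis of the lemma, and both fail in the paper's principal application of it (the proof of Theorem \ref{thm:uniqueness}): there $A=B=R$, $f=\id_R$, $C=\Ay{R}{Y'}$ is an acyclic closure of $\kk$, and the degree-one variables satisfy $\del(y_i)=\bar{x}_i\neq 0$. With your assignment one gets $\del^C(\tilde f(y_i))=0$ while $\tilde f(\del(y_i))=\bar{x}_i\neq 0$, so $\tilde f$ is not a chain map, and the construction breaks exactly where the lemma is needed. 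The repair is to treat $n=1$ uniformly with $n\geq 2$, as the paper does: $hf(\del(y))$ is a cycle in $C_0$, and it is a boundary because it has positive internal degree (the internal degree of $y$) while acyclicity of $C$ in this setting forces every such element of $C_0$ to lie in $\del(C_1)$; one then picks a trihomogeneous preimage $c$ and sets $\tilde f(y):=c$, never invoking any relationship between $B$ and $H_0(\Ay{A}{Y})$.
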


\begin{proof}
By induction it suffices to prove the case $Y=\{y\}$, therefore $\Ay{A}{Y}=\Ay{A}{y\mid \partial^{\Ay{A}{y}}(y)=z}$. 
The acyclicity of $C$ implies that there exists $c\in C$ such that $\partial^C(c)=f(z)$. Since
$\partial^C, \partial^A$ and $f$ are $G$-homogeneous maps of degree $e_G$, it follows that
$\gdeg{c}=\gdeg{y}$.  If $|y|$ is even then $\Ay{A}{y}$ is a free $A$-module with basis $\{y^{(k)}\}_{k\geq0}$.
Define the map $\tilde{f}:\Ay{A}{y}\rightarrow C$ as: $\tilde{f}(ay^{(k)})=f(a)c^{(k)}$ for
$k\geq0, a\in A$ and then extend by left color $R$-linearity.
It is straightforward to check that $\tilde{f}$ is a map of algebras which
preserves divided powers, and that $\tilde{f}$ fits into the diagram
above.
\end{proof}

\begin{lemma} \label{lem:uniqueness} Let $R$ be a noetherian color commutative connected
$\kk$-algebra.  Let
$$\xymatrix{
\Ay{A}{Y} \ar[r]^{\tilde{f}} & \Ay{A'}{Y'} \\
A \ar[u]^{g} \ar[r]^{f} & A' \ar[u]_{h} \\
}
$$
be a commutative diagram in $\cdgardp$ of algebras with $g$ and $h$ free extensions
with $Y$ and $Y'$ concentrated in positive degree with finitely many variables
in each degree.  Assume further that $f$ an isomorphism, and $H_0(\Ay{A'}{Y'}) \neq 0$.
Then $\tilde{f}$ is an isomorphism if and only if 
$H_0(\tilde{f})$ is an isomorphism and the induced map
$\bar{f} : \Ind_A(\Ay{A}{Y}) \to \Ind_{A'}(\Ay{A'}{Y'})$ is an isomorphism.
\end{lemma}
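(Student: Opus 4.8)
The plan is to treat the two implications separately, with the reverse direction carrying essentially all the weight; throughout I will use that both extensions are free over their bases and built up degree by degree.

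\textbf{Forward direction.} Suppose $\tilde f$ is an isomorphism in $\cdgardp$. Applying the functor $H_0$ shows at once that $H_0(\tilde f)$ is an isomorphism. For the indecomposables, note that the square relating $A \to H_0(\Ay{A}{Y})$ and $A' \to H_0(\Ay{A'}{Y'})$ via $f$ and $H_0(\tilde f)$ commutes, so that $f(J) = J'$ where $J = \ker(A \to H_0(\Ay{A}{Y}))$. Since $f$ and $\tilde f$ are isomorphisms, $\tilde f$ carries the submodule $A + JY + AY^{(\geq 2)}$ isomorphically onto $A' + J'Y' + A'(Y')^{(\geq 2)}$, and passing to quotients shows $\bar f$ is an isomorphism.

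\textbf{Reverse direction; set-up.} Assume $H_0(\tilde f)$ and $\bar f$ are isomorphisms. Since $f$ is an isomorphism, I would first use it to identify $A$ with $A'$, so that we may assume $A = A'$, $f = \id$, $g = h$, and $\tilde f$ restricts to the identity on $A$. I would then prove $\tilde f$ is an isomorphism by induction on the homological degree $n$, showing that $\tilde f : \Ay{A}{Y_{\leq n}} \to \Ay{A}{Y'_{\leq n}}$ is an isomorphism, where $Y_{\leq n}$ denotes the variables of homological degree at most $n$. The base case $n = 0$ is the identity on $A$; since $\Ay{A}{Y} = \bigcup_n \Ay{A}{Y_{\leq n}}$ and $\tilde f$ is compatible with the inclusions, the inductive conclusion gives the result.

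\textbf{Inductive step.} Write $C = \Ay{A}{Y_{\leq n-1}}$ and $C' = \Ay{A}{Y'_{\leq n-1}}$, so that by induction $\tilde f : C \to C'$ is an isomorphism and $\Ay{A}{Y_{\leq n}} = \Ay{C}{Y_n}$ is free over $C$ on the divided-power monomials in $Y_n$ (similarly over $C'$). The key is a degree count: every variable in $Y'_n$ has homological degree exactly $n$ and $C'$ is nonnegatively graded, so a divided-power monomial in $Y'_n$ of word length $k$ has homological degree at least $kn$; hence, as $\tilde f$ preserves homological degree, for $y \in Y_n$ the element $\tilde f(y)$ can involve the variables $Y'_n$ only linearly. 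Thus $\tilde f$ preserves the word-length filtration of $\Ay{C}{Y_n}$ in $Y_n$. On the associated graded the word-length-lowering part of the differential vanishes, so $\mathrm{gr}\,\Ay{C}{Y_n}$ is the free color divided-power algebra on $Y_n$ over $C$, and $\mathrm{gr}(\tilde f)$ is the $C'$-algebra map determined by the induced $C'$-linear map $\psi : C' \otimes_\kk \kk Y_n \to C' \otimes_\kk \kk Y'_n$ on the linear parts. Since an isomorphism of modules of generators lifts to an isomorphism of free divided-power algebras, it suffices to show $\psi$ is an isomorphism of free $C'$-modules. As $C'$ is connected for the total grading with $(C')_0 = \kk$ and finite dimensional in each degree, graded Nakayama reduces this to showing $\psi \otimes_{C'} \kk$ is an isomorphism; but this reduction records exactly the scalar ($R_0 = \kk$) leading coefficients of $\tilde f$ on $Y_n$, which is precisely the homological degree $n$ component of $\bar f$ reduced modulo positive degree, an isomorphism by hypothesis.

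\textbf{Main obstacle.} I expect the inductive step to be the crux, and within it the filtration bookkeeping: one must verify carefully that $\tilde f$ respects the word-length filtration (the degree count above), identify the associated graded as a free divided-power algebra so that $\mathrm{gr}(\tilde f)$ is governed by its linear part $\psi$, and reconcile the intrinsic definition of $\Ind_A(\Ay{A}{Y})$ (taken over $A$, modulo $J$) with the residue-field reduction of $\psi$ used in the Nakayama argument. A secondary point requiring care, peculiar to the divided-power and color setting, is confirming that an isomorphism on the module of generators genuinely lifts to an isomorphism of the free color divided-power algebras, which one checks against the structure axioms defining $\cdgardp$.
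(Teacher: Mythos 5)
Your converse argument takes a genuinely different route from the paper's, and in outline it is viable. The paper inducts on the homological degree $q$ of the \emph{components}: it first uses that $R$ is noetherian and connected to reduce "isomorphism" to "surjectivity" of each $\tilde f_q$, then proves surjectivity by showing
$\Ay{A'}{Y'}_q \subseteq \tilde f(\Ay{A}{Y}_q) + R_+\Ay{A'}{Y'}_q$
(using the decomposition $A' + J'Y' + A'Y'^{(\geq 2)}$ from Construction \ref{const:indec}) and applying graded Nakayama. You instead induct on the adjunction stage, use the word-length filtration to reduce everything to invertibility of the coefficient matrix $\psi$ of the new variables, and apply Nakayama to that matrix. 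Your route buys a direct proof of bijectivity (no noetherian upgrade from surjectivity) and some structural insight, at the price of needing the functoriality of free color divided-power algebras on the associated graded, which you correctly flag as requiring verification against the axioms of $\cdgardp$. (In the forward direction, note also that to get that $\tilde f$ carries the submodule \emph{onto} $A' + J'Y' + A'(Y')^{(\geq 2)}$ you should run the containment argument for $\tilde f^{-1}$ as well and conclude that the induced maps on quotients are mutually inverse.)

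There is, however, one genuine gap, and it sits exactly where the hypothesis $H_0(\Ay{A'}{Y'}) \neq 0$ --- which your proof never invokes --- must enter. You identify $\psi \otimes_{C'} \kk$ with "the homological degree $n$ component of $\bar f$ reduced modulo positive degree." But $\bar f_n$ is a map of free modules over $B' = H_0(\Ay{A'}{Y'}) = A'_0/J'_0$, with matrix the classes $[c_{y,y'}]$ in $B'$, whereas $\psi \otimes_{C'} \kk$ has matrix $\epsilon(c_{y,y'})$, where $\epsilon : A'_0 \to \kk$ is the augmentation killing positive internal degree. For invertibility over $B'$ to imply invertibility over $\kk$, the augmentation must factor through $B'$, i.e.\ one needs $J'_0$ to lie inside the augmentation ideal of $A'_0$; this holds precisely because $J'_0$ is a \emph{proper} graded ideal of a connected algebra, and properness is exactly the hypothesis $H_0(\Ay{A'}{Y'}) \neq 0$ (the paper makes this same point when it notes $J'_0 \subseteq R_+A'_0$). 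Without that hypothesis the lemma is false, so no proof omitting it can be complete: if, say, $A = A' = R$ and $\del(y) = 1$ for the single variable $y \in Y$, while $Y'$ consists of two degree-one variables with differential $1$, then $J = A$, $J' = A'$, both indecomposable complexes are zero, so $\bar f$ and $H_0(\tilde f)$ are vacuously isomorphisms, yet $\Ay{A}{Y}$ and $\Ay{A'}{Y'}$ have different ranks over $R$. The repair is local: at the quoted step, observe that $H_0(\Ay{A'}{Y'}) \neq 0$ forces $J'_0 \subseteq R_+A'_0$, hence $\epsilon$ factors through $B'$ and the reduction of the invertible matrix of $\bar f_n$ is the matrix of $\psi \otimes_{C'} \kk$. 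With that sentence added, your argument goes through.
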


\begin{proof} Set $B = H_0(\Ay{A}{Y})$ and $B' = H_0(\Ay{A'}{Y'})$.
Clearly if $\tilde{f}$ is an isomorphism then $H_0(\tilde{f})$ and
$\bar{f}$ are isomorphisms as well.  For the converse, as noted in 
Construction \ref{const:indec}, one has that $\Ind_A(\Ay{A}{Y})$ is a
complex of free $B$-modules, and since $B \cong B'$, we have that 
$\Ind_{A'}(\Ay{A'}{Y'})$ is as well.  Since $\bar{f}$ is an isomorphism,
it follows that for each $q$, $Y$ and $Y'$ contain the same number of 
variables in homological degree $q$.  Since $f$ is an isomorphism, this
implies that $\Ay{A}{Y}_q$ and $\Ay{A}{Y'}_q$ are isomorphic as well.
Since $R$ is noetherian and connected graded, in order to show that 
$\tilde{f}_q$ is an isomorphism, it suffices to show that $\tilde{f}_q$
is surjective.

We proceed by induction on $q$, with the case $q = 0$ being clear.  Assume
that $\tilde{f}_p$ is surjective for all $p < q$.  Since $\bar{f}$ is 
surjective, we have that
$$\Ay{A'}{Y'} \subseteq \tilde{f}(\Ay{A}{Y}_q) + A'_q + (J'Y')_q +
(A'Y'^{(\geq 2)})_q,$$
where $J' = \ker A' \to B'$ as in the definition of
$\Ind_{A'}(\Ay{A'}{Y'})$.  Since $f$ is surjective, we have
$A'_q \subseteq \tilde{f}(\Ay{A}{Y}_q)$.  Since $f$ preserves divided
powers, the induction hypothesis and surjectivity of $f$ implies
$(J'_{\geq 1}Y')_q$ and $(A'Y'^{(\geq 2)})_q$ are contained in $\tilde{f}
(\Ay{A}{Y}_q)$.   Since $H_0(\Ay{A'}{Y'})$ is nonzero, we know that $J'_0$ is 
contained in $R_+A'_0$, hence
$(J'_0Y') \subseteq R_+A'_0Y' \subseteq R_+\Ay{A'}{Y'}$.
Putting these facts together, we have that
$$\Ay{A'}{Y'}_q \subseteq \tilde{f}(\Ay{A}{Y}_q) + R_+\Ay{A'}{Y'}_q,$$
which gives surjectivity of $\tilde{f}_q$ by Nakayama's Lemma.
\end{proof}

We are finally ready to prove uniqueness of acyclic closures.  
\begin{theorem}\label{thm:uniqueness}
Let $\Ay{R}{Y}$ and $\Ay{R}{Y'}$ be acyclic closures of $\kk$ over $R$. Then there exists an isomorphism $\tilde{f}:\Ay{R}{Y}\rightarrow\Ay{R}{Y'}$ in the category $\cdgardp$ such that $f|_R=\id_R$.
\end{theorem}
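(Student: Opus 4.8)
My plan is to construct the comparison morphism $\tilde f$ by the lifting property of semi-free extensions and then to recognize it as an isomorphism by means of Lemma \ref{lem:uniqueness}, using in an essential way that acyclic closures are minimal.

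To build $\tilde f$, I would apply Lemma \ref{lem:extension} to the commutative square in $\cdgardp$ whose bottom edge is the identity $\id_R : R \to R$, whose left and right edges are the canonical inclusions $R \hookrightarrow \Ay{R}{Y}$ and $R \hookrightarrow \Ay{R}{Y'}$, and whose top-right corner is $C = \Ay{R}{Y'}$. Being an acyclic closure of $\kk$, the complex $\Ay{R}{Y'}$ is a resolution of $\kk$ over $R$; thus $H_n(\Ay{R}{Y'}) = 0$ for $n \geq 1$ and $\del_1$ maps onto $R_+$, which is exactly the acyclicity needed to run the lemma. Lemma \ref{lem:extension} then produces a morphism $\tilde f : \Ay{R}{Y} \to \Ay{R}{Y'}$ in $\cdgardp$ satisfying $\tilde f|_R = \id_R$.

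Next I would verify the hypotheses of Lemma \ref{lem:uniqueness} with $A = A' = R$ and $f = \id_R$: the extensions are free, $Y$ and $Y'$ are concentrated in positive degrees with finitely many variables in each degree (as $R$ is noetherian connected graded and each partial closure has finitely generated homology), and $H_0(\Ay{R}{Y'}) = \kk \neq 0$. By that lemma, $\tilde f$ is an isomorphism in $\cdgardp$ precisely when $H_0(\tilde f)$ and the induced map on indecomposables $\bar f : \Ind_R(\Ay{R}{Y}) \to \Ind_R(\Ay{R}{Y'})$ are both isomorphisms. The statement about $H_0$ is immediate: both sides equal $R/R_+ = \kk$ and $\tilde f$ restricts to the identity on $R$, so $H_0(\tilde f) = \id_\kk$.

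The heart of the argument, and the step I expect to be the main obstacle, is showing that $\bar f$ is an isomorphism. Here I would use that both acyclic closures are minimal free resolutions of $\kk$, by the final (minimality) theorem of Section \ref{sec:AcyClos}. Since $\tilde f$ restricts to $\id_R$ and commutes with the augmentations to $\kk$, it is a quasi-isomorphism of bounded-below complexes of free $R$-modules, hence a homotopy equivalence; as $-\otimes_R\kk$ preserves homotopy equivalences, the map $\tilde f \otimes_R \kk$ is a quasi-isomorphism. By minimality the differentials of $\Ay{R}{Y}\otimes_R\kk$ and $\Ay{R}{Y'}\otimes_R\kk$ vanish, so $\tilde f \otimes_R \kk$ is bijective in each homological degree, and being a morphism in $\cdgardp$ it is an isomorphism of $G$-graded $\kk$-algebras with divided powers. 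Passing to indecomposables, which as observed in Construction \ref{const:indec} is computed by the quotient of $\Ay{R}{Y}\otimes_R\kk$ by its decomposable elements and the degree-zero line, the functoriality of this quotient shows that $\bar f$ is an isomorphism. With both criteria of Lemma \ref{lem:uniqueness} satisfied, $\tilde f$ is an isomorphism in $\cdgardp$ fixing $R$, which is the desired conclusion.
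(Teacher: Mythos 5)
Your proposal is correct, and while it builds $\tilde f$ exactly as the paper does (via Lemma \ref{lem:extension}) and closes with the same Lemma \ref{lem:uniqueness}, it verifies the key hypothesis of that lemma---that $\bar f$ is an isomorphism on indecomposables---by a genuinely different argument. The paper never tensors the full closures down to $\kk$: it applies Lemma \ref{lem:uniqueness} inductively to the truncations $\tilde f_i : \Ay{R}{Y_{\leq i}} \to \Ay{R}{Y'_{\leq i}}$, running a five-lemma argument on the short exact sequences of indecomposables in which the relative term $\Ind_{\Ay{R}{Y_{\leq i-1}}}(\Ay{R}{Y_{\leq i}})$ is identified with $\mathrm{H}_{i-1}(\Ay{R}{Y_{\leq i-1}})\otimes_R\kk$ --- an identification that encodes exactly the minimal-generation condition in Definition \ref{def:acyclicClosure} --- and then passes to the direct limit. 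You instead apply Lemma \ref{lem:uniqueness} once, to the full extensions, and get the statement about $\bar f$ from the minimality theorem closing Section \ref{sec:AcyClos} (namely $\del(\Ay{R}{Y}) \subseteq R_+\Ay{R}{Y}$): a quasi-isomorphism of bounded-below complexes of free $R$-modules is a homotopy equivalence, so $\tilde f \otimes_R \kk$ is a quasi-isomorphism between complexes with zero differential, hence bijective in every degree, and therefore induces an isomorphism on the quotients by the degree-zero line and the decomposables. Your route is shorter and avoids the inductive bookkeeping, at the cost of invoking the heavier Gulliksen--Schoeller-type minimality theorem; the paper's induction uses only the defining property of acyclic closures and produces the compatible isomorphisms $\tilde f_i$ at every finite stage. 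One point you should make explicit: to conclude that the induced map on indecomposables is bijective (not merely well defined and surjective-on-images), you need that the \emph{inverse} of $\tilde f \otimes_R \kk$ also carries decomposables to decomposables; this is fine because the inverse of a bijective morphism in $\cdgardp$ is again a morphism in $\cdgardp$ (in particular it preserves divided powers), so the functoriality of the quotient applies to both $\tilde f \otimes_R \kk$ and its inverse, yielding mutually inverse maps on indecomposables.
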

\begin{proof}
The existence of $\tilde{f}$ follows from extending $\id_R:R\rightarrow R$ using Lemma \ref{lem:extension}. 
The map $\tilde{f}$ induces homomorphisms in $\cdgardp$ for all $i\geq0$
\[
\tilde{f}_i:\Ay{R}{Y_{\leq i}}\rightarrow \Ay{R}{Y^\prime_{\leq i}}.
\]
It suffices to prove by induction that $\tilde{f}_i$ is an isomorphism since then it will follow
from the exactness of direct limits that $\tilde{f}$ is also an isomorphism.

The map $\tilde{f}_0=\id_R$ is clearly an isomorphism. Let $i\geq1$ and consider the following commutative diagram with exact rows
$$\xymatrix{
0\ar[r]&\Ind_R(\Ay{R}{Y_{\leq i-1}})\ar[r]\ar[d]^{\alpha}&\Ind_R(\Ay{R}{Y_{\leq i}})\ar[r]\ar[d]^{\beta}&\Ind_{\Ay{R}{Y_{\leq i-1}}}(\Ay{R}{Y_{\leq i}})\ar[r]\ar[d]^{\gamma}&0\\
0\ar[r]&\Ind_R(\Ay{R}{Y^\prime_{\leq i-1}})\ar[r]&\Ind_R(\Ay{R}{Y'_{\leq i}})\ar[r]&\Ind_{\Ay{R}{Y'_{\leq i-1}}}(\Ay{R}{Y'_{\leq i}})\ar[r]&0\\
}
$$
where the vertical maps are induced by $\tilde{f}$.

Notice that by construction, the rows of the following diagram are isomorphisms of $\kk$-vector spaces induced by the differential of the acyclic closure 
$$\xymatrix{
\Ind_{\Ay{R}{Y_{\leq i-1}}}(\Ay{R}{Y_{\leq i}})\ar[r]\ar[d]^{\gamma}&\mathrm{H}_{i-1}(\Ay{R}{Y_{\leq i-1}})\otimes_R\kk\ar[d]^{\eta}\\
\Ind_{\Ay{R}{Y'_{\leq i-1}}}(\Ay{R}{Y'_{\leq i}})\ar[r]&\mathrm{H}_{i-1}(\Ay{R}{Y^\prime_{\leq i-1}})\otimes_R\kk.}
$$
It follows that if $\tilde{f}_{i-1}$ is an isomorphism then so is $\eta$ and hence $\gamma$. The map $\alpha$ is also an isomorphism, hence we deduce that $\beta$ is one as well. By Lemma \ref{lem:uniqueness} it follows that $\tilde{f}_i$ is an isomorphism, completing the induction argument.
\end{proof}

\section{Homotopy Color Lie Algebra}

We start this section by giving the definition of graded color Lie algebra over an associative ring $R$, 
regardless of the characteristic of $R$. Since the Lie algebras of interest to us arise
in cohomology, we adopt cohomological conventions for our gradings.  We continue to use
the same notational conventions appearing in Remark \ref{rem:threeGradings2}. 

In our applications the Lie algebras will come equipped with an internal grading which, for the sake of readability, will be dropped in the definition of graded color Lie algebra.

Let $G$ be an abelian group, with identity $e_G$, and let $\e$ be a skew bicharacter on $G$ as defined
in Section \ref{sec:AcyClos}. Let $L=\bigoplus_{(i,\sigma)\in \mathbb{Z}\times G}L^{i,\sigma}$ be a 
$(\mathbb{Z}\times G)$-graded left $R$-module.  If $R$ comes equipped with an internal grading then $L$
is assumed to also have an internal grading, in which case all the maps in this section are assumed
to be compatible with respect to this grading and all the elements that follow are assumed to be
homogeneous. As before, if $x\in L^{i,\sigma}$
and $y\in L^{j,\tau}$ then we abuse notation and write $\e(x,y)$ for 
$\e(\sigma,\tau)$. If $x\in L^{i,\sigma}$ then we denote by 
$|x|$ its cohomological degree, i.e. $|x|=i$.

\begin{definition} \label{def:CLA}
The $(\mathbb{Z}\times G)$-graded $R$-module $L$ is said to be a
\emph{graded color Lie algebra} if it is endowed with a $R$-bilinear operation
\[
[-,-]:L\times L\rightarrow L,
\]
and \emph{square} maps
\[
(-)^{[2]}:L^{2i+1,\sigma}\rightarrow L^{4i+2,\sigma^2}, 
\]
such that for all $x\in L^{i,\sigma}$ and $y\in L^{j,\tau}$, one has
\begin{enumerate}
\item $[L^{i,\sigma},L^{j,\tau}]\subseteq L^{i+j,\sigma\tau}$,
\item The bracket is color anti-commutative:
$$[x,y]=-(-1)^{|x||y|}\e(x,y)[y,x],$$
\item The color Jacobi identity holds:
$$(-1)^{|z||x|}\e(z,x)[[x,y],z]+
 (-1)^{|x||y|}\e(x,y)[[y,z],x]+
 (-1)^{|y||z|}\e(y,z)[[z,x],y]
 =0,$$
\item if $|x|$ is even then $[x,x]=0$,
\item if $|x|$ is odd then $[[x,x],x]=0$,
\item if $x,y\in L^{i,\sigma}$ with $i$ odd then $(x+y)^{[2]}=[x,y]+x^{[2]}+y^{[2]}$,
\item if $|x|$ is odd and $a\in R$ then $(ax)^{[2]}=a^2x^{[2]}$,
\item if $|x|$ is odd then $[x^{[2]},y]=[x,[x,y]]$ for all $y\in L$.
\end{enumerate}
\end{definition}

\begin{remark}
We point out that if the characteristic of $R$ is not 2 nor 3 it is possible to give the previous 
definition using a skew bicharacter defined on the group $\mathbb{Z}\times G$. 
\end{remark}
\begin{remark} \label{AdIsColorDer}
Property (3) in the previous definition can also be expressed as
\[
[x,[y,z]]=[[x,y],z]+(-1)^{|x||y|}\e(x,y)[y,[x,z]].
\]
This is equivalent to saying the map
$\operatorname{ad}_x(y) = [x,y]$ is a color derivation of $L$.
\end{remark}

\begin{remark}
Let $B$ be an associative $\bbZ\times G$-graded $R$-algebra. Then $\Lie(B)$ is the graded color Lie algebra with underlying $R$-module $B$, bracket given by $[x,y]=xy-(-1)^{|x||y|}\e(x,y)yx$, with $x$ and $y$ trihomogeneous and $z^{[2]}=z^2$ for $z$ trihomogeneous with $|z|$ odd. One can check that $\Lie(B)$ is indeed a graded color Lie algebra.
\end{remark}

\begin{definition}
Let $L,L'$ be graded color Lie algebras over $R$. A trihomogeneous map of left $R$-modules $f:L\rightarrow L'$ is a morphism of graded color Lie algebras if $f([x,y])=[f(x),f(y)]$ for all $x,y\in L$ and $f(z^{[2]})=f(z)^{[2]}$ for all trihomogeneous $z\in L$ with $|z|$ odd.
\end{definition}

\begin{definition}
Let $L$ be a graded color Lie algebra. The \emph{universal enveloping algebra of $L$} is
the following quotient of the tensor algebra $T(L)$:
\[
U(L)=\frac{T(L)}{\begin{pmatrix}
x\otimes y-(-1)^{|x||y|}\e(x,y)y\otimes x-[x,y]\\ z\otimes z-z^{[2]},|z|\mathrm{\;odd}
\end{pmatrix}}.
\]
\end{definition}

\begin{remark}
The universal enveloping algebra of $L$ satisfies the following universal property.
Given a $\mathbb{Z}\times G$-graded associative $R$-algebra $B$ and morphism of 
graded color Lie algebras $g:L\rightarrow\Lie(B)$, there is a unique 
homomorphism of $\mathbb{Z}\times G$-graded associative algebras
$g':U(L)\rightarrow B$, such that $g=g'f$, where $f$ is the canonical inclusion
$f : L \to U(L)$.  We call $g'$ the \emph{universal extension} of $g$.
\end{remark}

\begin{remark}
Assume that $L^n=0$ for $n\leq0$, and that $L$ is a free $R$-module.
Fix a trihomogeneous basis of $L$, denoted by $\Theta=\{\theta_i\}_{i\geq 1}$ ordered
in such a way that $|\theta_i|\leq|\theta_j|$ for $i<j$. Let 
$I=(i_1,i_2,\ldots)$ be a sequence of nonnegative integers such that $i_j\in\{0,1\}$ if $|\theta_j|$ 
is odd and $i_j=0$ for $j\gg0$. Fix an indexing sequence $I$ and a $q$ such that $i_j=0$ for $j>q$, a 
\emph{normal monomial on $\Theta$} is an element of $U(L)$ of the form 
$\theta^I=\theta_q^{i_q}\cdots\theta_1^{i_1}$ (we are dropping the tensor product sign). It is a 
straightforward check that the set of normal monomials on $\Theta$ span $U(L)$. 

\end{remark}

\begin{definition}
A \emph{color DG Lie algebra} over a ring $R$ is a graded color Lie algebra $L$ over $R$ with a degree $(-1,e_G)$ $R$-linear map $\partial: L\rightarrow L$, such that $\partial^2=0$ and 
\begin{equation}\label{DGCLA}
\partial([x,y])=[\partial(x),y]+(-1)^{|x|}[x,\partial(y)],\;\mathrm{and}\;\partial(z^{[2]})=[\partial(x),z]\;\mathrm{for}\;|z|\;\mathrm{odd}.
\end{equation}
\end{definition}
Our interest in color DG Lie algebras arises from the following lemma.
For the remainder of the paper $R$ will denote the quotient of a skew polynomial ring by an ideal
generated by a sequence of homogeneous normal elements.  
\begin{lemma}
Let $R\rightarrow \Ay{R}{Y}$ be a semi-free extension. The inclusion
\[
\Der_R(\Ay{R}{Y},\Ay{R}{Y})\subseteq \Lie(\Hom_R(\Ay{R}{Y},\Ay{R}{Y}))
\]
is one of color DG Lie algebras.
\end{lemma}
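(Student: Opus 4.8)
The plan is to show that $\Der_R(\Ay{R}{Y},\Ay{R}{Y})$ is an $R$-submodule of $\Lie(\Hom_R(\Ay{R}{Y},\Ay{R}{Y}))$ closed under the bracket, the square map, and the differential; the inclusion then tautologically respects all three operations and is a morphism of color DG Lie algebras. Write $E=\Ay{R}{Y}$. First I would record that $\Hom_R(E,E)$, equipped with \emph{composition}, is an associative $\bbZ\times G$-graded $R$-algebra whose Hom-complex differential $\del(\beta)=\del^E\beta-(-1)^{|\beta|}\beta\del^E$ is a graded derivation for composition; a one-line expansion gives $\del(\alpha\beta)=\del(\alpha)\beta+(-1)^{|\alpha|}\alpha\del(\beta)$. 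It follows formally that the conditions \eqref{DGCLA} hold for the commutator bracket and the square on $\Lie(\Hom_R(E,E))$, so that $\Lie(\Hom_R(E,E))$ is a color DG Lie algebra. Since $\Der_R(E,E)$ was already shown in Section \ref{sec:AcyClos} to be a color DG $E$-submodule of $\Hom_R(E,E)$, it is in particular an $R$-submodule closed under $\del$, and only closure under the bracket and the square remains to be checked.

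For the bracket, let $D,D'$ be $A$-linear color derivations of colors $\sigma,\tau$ and homological degrees $|D|,|D'|$, and set $[D,D']=DD'-(-1)^{|D||D'|}\e(D,D')D'D$. It kills $R$ since $D$ and $D'$ do. To verify the color Leibniz rule I would expand $DD'(bb')$ and $D'D(bb')$ by applying the derivation identity twice. Each expansion produces two ``diagonal'' terms (the second-order action landing on $b$ and on $b'$) and two ``mixed'' terms of the shape $D'(b)D(b')$ and $D(b)D'(b')$. The diagonal terms reassemble into the color Leibniz rule for $[D,D']$ of color $\sigma\tau$ and degree $|D|+|D'|$, using identities such as $\e(D,D'(b))=\e(D,D')\e(D,b)$. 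The mixed terms cancel after forming the commutator: the $D'(b)D(b')$ contributions cancel outright, while the $D(b)D'(b')$ contributions cancel using $(-1)^{2|D||D'|}=1$ and $\e(D,D')\e(D',D)=1$. The divided power condition $[D,D'](y^{(i)})=[D,D'](y)y^{(i-1)}$ needs more care: the same expansion leaves a residual term proportional to $D'(y)D(y)y^{(i-2)}$ from $DD'$ and one proportional to $D(y)D'(y)y^{(i-2)}$ from $D'D$; rewriting the latter via the graded color commutativity relation $D(y)D'(y)=(-1)^{|D||D'|}\e(D,D')\e(D,y)\e(D',y)^{-1}D'(y)D(y)$ in $E$ (valid since $|y|$ is even) shows the two residuals are negatives of one another and cancel.

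For the square map, assume $|D|$ is odd and put $D^{[2]}=D^2$. Applying the derivation identity to $D(D(b)b')$ and to $D(bD(b'))$, and using $(-1)^{|D|^2}=-1$ together with $\e(\sigma,\sigma)=1$, the two mixed terms $\pm\e(D,b)D(b)D(b')$ cancel, leaving $D^2(bb')=D^2(b)b'+\e(D,b)^2bD^2(b')$; since $|D^2|=2|D|$ is even and $\e(D^2,b)=\e(D,b)^2$, this is precisely the color Leibniz rule for a derivation of color $\sigma^2$ and degree $2|D|$. For the divided power condition the same expansion gives $D^2(y^{(i)})=D^2(y)y^{(i-1)}-\e(D,y)D(y)^2y^{(i-2)}$, and $D(y)^2=0$ because $D(y)$ is trihomogeneous of odd homological degree in the graded color commutative algebra $E$. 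Hence $D^2$ is again a color derivation, and since the square on $\Lie(\Hom_R(E,E))$ is also $z\mapsto z^2$, the inclusion respects the square.

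Putting these together, $\Der_R(E,E)$ is closed under bracket, square, and differential, hence is a color DG Lie subalgebra of $\Lie(\Hom_R(E,E))$, and the inclusion is a morphism of color DG Lie algebras. The main obstacle I anticipate is the sign and bicharacter bookkeeping in the two divided power conditions, where the graded color commutativity of $E$ and the identity $\e(\sigma,\sigma)=1$ must be invoked to annihilate the residual $D(y)D'(y)$- and $D(y)^2$-type terms; the Leibniz computations themselves are routine once the diagonal/mixed decomposition is set up.
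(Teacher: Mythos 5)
Your proposal is correct and takes essentially the same approach as the paper: both prove the lemma by directly expanding the Leibniz rule to show that color derivations are closed under the commutator bracket and the squaring operation, with the same cancellation of the residual $D(y)D'(y)$- and $D(y)^2$-type terms in the divided-power checks (the latter via $x^2=0$ for odd trihomogeneous elements). The only minor differences are that you verify the divided-power condition for the bracket explicitly (the paper records this check only for the square), and you obtain the DG-compatibility identities formally from the Hom-complex differential being a derivation for composition, whereas the paper deduces them from the color Jacobi identity and property (8) of Definition \ref{def:CLA} after observing that $\partial^{\Der}(\theta)=[\partial^{\Ay{R}{Y}},\theta]$; both routes are valid.
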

\begin{proof}
\begingroup
\allowdisplaybreaks
Let $x,y$ be trihomogeneous elements of $\Ay{R}{Y}$ and $\theta$ be a derivation of odd degree. Then
\begin{align*}
\theta^2(xy)&=\theta(\theta(x)y+(-1)^{|x|}\e(\theta,x)x\theta(y))\\
&=\theta^2(x)y+(-1)^{|\theta|(|\theta|+|x|)}\e(\theta,\theta(x))\theta(x)\theta(y)+\\
&\hphantom{=}(-1)^{|x|}\e(\theta,x)(\theta(x)\theta(y)+(-1)^{|\theta||x|}\e(\theta,x)x\theta^2(y))\\
&=\theta^2(x)y-(-1)^{|x|}\e(\theta,\theta)\e(\theta,x)\theta(x)\theta(y)+\\
&\hphantom{=}(-1)^{|x|}\e(\theta,x)\theta(x)\theta(y)+\e(\theta,x)\e(\theta,x)x\theta^2(y)\\
&=\theta^2(x)y+\e(\theta^2,x)x\theta^2(y).
\end{align*}
If $|y|$ is an even variable then
\[
\theta^2(y^{(i)})=\theta(\theta(y)y^{(i-1)})=\theta^2(y)y^{(i-1)}-(\theta(y))^2y^{(i-2)}=\theta^2(y)y^{(i-1)}.
\]
Now we prove that the bracket of two color derivations is a color derivation
\begin{align*}
[\theta,\xi](xy)&=(\theta\xi-(-1)^{|\theta||\xi|}\e(\theta,\xi)\xi\theta)(xy)\\
&=\theta(\xi(x)y+(-1)^{|\xi||x|}\e(\xi,x)x\xi(y))\\
&\hphantom{=}-(-1)^{|\theta||\xi|}\e(\theta,\xi)\xi(\theta(x)y+(-1)^{|\theta||x|}\e(\theta,x)x\theta(y))\\
&=\theta\xi(x)y+(-1)^{|\theta||\xi(x)|}\e(\theta,\xi(x))\xi(x)\theta(y)\\
&\hphantom{=}+(-1)^{|\xi||x|}\e(\xi,x)(\theta(x)\xi(y)+(-1)^{|\theta||x|}\e(\theta,x)x\theta\xi(y))\\
&\hphantom{=}-(-1)^{|\theta||\xi|}\e(\theta,\xi)(\xi\theta(x)y+(-1)^{|\xi||\theta(x)|}\e(\xi,\theta(x))\theta(x)\xi(y)\\
&\hphantom{=}+(-1)^{|\theta||x|}\e(\theta,x)(\xi(x)\theta(y)+(-1)^{|\xi||x|}\e(\xi,x)x\xi\theta(y)))\\
&=\theta\xi(x)y+(-1)^{|x|(|\theta|+|\xi)|}\e(\theta\xi,x)x\theta\xi(y)\\
&\hphantom{=}-(-1)^{|\theta||\xi|}\e(\theta,\xi)(\xi\theta(x)y+(-1)^{|x|(|\xi|+|\theta|)}\e(\theta,x)\e(\xi,x)x\xi\theta(y))\\
&=\theta\xi(x)y-(-1)^{|\theta||\xi|}\e(\theta,\xi)\xi\theta(x)y\\
&\phantom{=}+(-1)^{|x|(|\theta|+|\xi|)}\e(\theta\xi,x)x(\theta\xi(y)-(-1)^{|\theta||\xi|}\e(\theta,\xi)\xi\theta(y))\\
&=[\theta,\xi](x)y+(-1)^{|x|(|\theta|+|\xi|)}\e(\theta\xi,x)[\theta,\xi](y) 
\end{align*}
This shows that the derivations form a graded color Lie algebra, we now prove that they form a DG color Lie algebra. We denote by $\partial^{\Ay{R}{Y}}$ the differential of the acyclic closure and by $\partial^{\Der}$ the differential of the complex of derivations. We notice that $\partial^{\Ay{R}{Y}}$ is a color derivation and that if $\theta$ is a color derivation then $\partial^{\Der}(\theta)=[\partial^{\Ay{R}{Y}},\theta]$. Now the conditions
in \eqref{DGCLA} follow from the color Jacobi identity (using Remark \ref{AdIsColorDer}) and property (8) in the definition of graded color Lie algebras.
\endgroup
\end{proof}

Since the variables adjoined in the acyclic closure of $\kk$ over $R$ are trigraded, and since
the acyclic closure is unique up to isomorphism, we obtain a family of invariants of a color commutative
algebra $R$.

\begin{definition}
Let $\Ay{R}{Y}$ be an acyclic closure of $\kk$ over $R$. The following invariants of $R$ are called the \emph{deviations} of $R$:
\begin{align*}
\varepsilon_{i,\sigma,j}(R)=&|\{y\in Y\mid |y|=i,\gdeg{y}=\sigma,\mathrm{deg}\;y=j,\}|,&~&i,j\in\mathbb{N},\sigma\in G.
\end{align*}
\end{definition}

\begin{definition}\label{defn:piR}
The \emph{homotopy color Lie algebra} of $R$ is 
\[
\pi(R)=\mathrm{H}(\Der_R(\Ay{R}{Y},\Ay{R}{Y})),
\]
where $\Ay{R}{Y}$ is an acyclic closure of $\kk$ over $R$.  Note that since the acyclic
closure is trigraded, $\pi(R)$ possesses an additional internal grading which is not part
of our definition of graded color Lie algebra above.  Also, it is an invariant of $R$
by Theorem \ref{thm:uniqueness}.  We denote its graded components as
\[
\pi^{i,\sigma,j}(R),
\]
where $i$ denotes the cohomological degree, $\sigma$ denotes the group degree and
$j$ the internal degree.
\end{definition}

\begin{theorem}\label{thm:BasisOfPi}
Let $\Ay{R}{Y}$ be an acyclic closure of $\kk$ over $R$, where $Y=\{y_i\}_{i\geq 1}$ and $|y_i|\leq |y_j|$ for $i<j$.
\begin{enumerate}
\item $\mathrm{rank}_{\kk}\;\pi^{i,\sigma,j}(R)=\varepsilon_{i,\sigma,j}(R)$, for $i,j\in\mathbb{N}$ and $\sigma\in G$.
\item $\pi(R)$ has a $\kk$-basis
\[
\Theta=\{\theta_i=\cls{\vartheta_i}\mid\vartheta_i\in\Der_R(\Ay{R}{Y},\Ay{R}{Y}),\vartheta_i(y_j)=\delta_{ij}\;\mathrm{for}\;j\leq i\}_{i\geq 1}.
\]
\end{enumerate}

\end{theorem}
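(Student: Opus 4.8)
The plan is to transport the computation from the coefficient module $\Ay{R}{Y}$ to $\kk$ and then read off everything from minimality of the acyclic closure. Since $\Ay{R}{Y}$ is a free resolution of $\kk$ over $R$ (Construction \ref{constr:AcyClos}), the augmentation $\phi\colon\Ay{R}{Y}\to\kk$ is a surjective quasi-isomorphism of color DG modules over $\Ay{R}{Y}$. Applying Corollary \ref{cor:derQuism} with $U=\Ay{R}{Y}$ and $V=\kk$ produces a quasi-isomorphism $\Der_R(\Ay{R}{Y},\Ay{R}{Y})\xra{\simeq}\Der_R(\Ay{R}{Y},\kk)$, whence $\pi(R)=\mathrm{H}(\Der_R(\Ay{R}{Y},\Ay{R}{Y}))\cong\mathrm{H}(\Der_R(\Ay{R}{Y},\kk))$.

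The crucial step is to show that the differential of $\Der_R(\Ay{R}{Y},\kk)$ vanishes identically, so that its homology is the entire complex. For $\beta\in\Der_R(\Ay{R}{Y},\kk)$ one has $\del(\beta)=-(-1)^{|\beta|}\beta\,\del^{\Ay{R}{Y}}$ because $\kk$ has trivial differential, and $\del(\beta)$ is again a derivation, hence determined by its values $\del(\beta)(y)=-(-1)^{|\beta|}\beta(\del(y))$ on $y\in Y$. By the minimality theorem of Section \ref{sec:AcyClos} one has $\del(y)\in R_+\Ay{R}{Y}$; writing $\del(y)$ as an $R_+$-linear combination of normal monomials and using that $\beta$ is an $R$-linear derivation vanishing on $R$, each resulting term of $\beta(\del(y))$ is a scalar multiple of an element of $R_+$ acting on $\kk$, and so is zero. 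Therefore $\del(\beta)=0$ and $\pi(R)\cong\Der_R(\Ay{R}{Y},\kk)$.

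For part (1), a color derivation $\beta\colon\Ay{R}{Y}\to\kk$ is determined freely and uniquely by a trihomogeneous assignment of values in $\kk$ to the variables $Y$; as $\kk$ is concentrated in tridegree $(0,e_G,0)$, a trihomogeneous $\beta$ of a fixed tridegree is supported only on the variables $y\in Y$ of the matching (dual) tridegree. Hence $\Der_R(\Ay{R}{Y},\kk)$ has the $\kk$-basis $\{y^{*}\mid y\in Y\}$, where $y^{*}$ sends $y$ to $1$ and every other variable to $0$, and counting the $y^{*}$ whose tridegree corresponds to $\pi^{i,\sigma,j}(R)$ gives exactly $\varepsilon_{i,\sigma,j}(R)$. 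For part (2), the chain derivations $\vartheta_i$ of Lemma \ref{lem:UpTriang} are cycles of $\Der_R(\Ay{R}{Y},\Ay{R}{Y})$, so $\theta_i=\cls{\vartheta_i}\in\pi(R)$, and under the quasi-isomorphism above $\theta_i$ is sent to the class of $\phi\circ\vartheta_i$. I would then check that $\phi\circ\vartheta_i=y_i^{*}$: the scalar $(\phi\circ\vartheta_i)(y_h)$ is the constant term of $\vartheta_i(y_h)$, which can be nonzero only if $|y_h|=|y_i|$, and for such $h$ the first property in Lemma \ref{lem:UpTriang} gives $\vartheta_i(y_h)=\delta_{ih}$. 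Since $\{y_i^{*}\}$ is a basis of $\Der_R(\Ay{R}{Y},\kk)$, it follows that $\Theta=\{\theta_i\}_{i\geq1}$ is a $\kk$-basis of $\pi(R)$.

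I expect the vanishing of the differential on $\Der_R(\Ay{R}{Y},\kk)$ to be the main obstacle, since it is precisely the point where minimality of the acyclic closure is indispensable and where the derivation property, $R$-linearity, and the containment $\del(Y)\subseteq R_+\Ay{R}{Y}$ must be combined with care. The remaining identifications are essentially bookkeeping with the three gradings; the one delicate point there is to confirm that the group-degree conventions of Remark \ref{rem:threeGradings2} make the dimension count land on $\varepsilon_{i,\sigma,j}(R)$ rather than on an index-inverted version of it.
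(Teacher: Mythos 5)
Your proof is correct, and while it shares the paper's starting point and its key input, the execution of both main steps is genuinely different. Like the paper, you begin by applying Corollary \ref{cor:derQuism} to the augmentation $\Ay{R}{Y}\to\kk$ to reduce to $\mathrm{H}(\Der_R(\Ay{R}{Y},\kk))$, and like the paper the decisive input is minimality of the acyclic closure. But from there the paper does not argue directly with $\Der_R(\Ay{R}{Y},\kk)$: it invokes Proposition \ref{prop:Diff} to replace derivations by $\Hom_{\Ay{R}{Y}}(\Diff_R\Ay{R}{Y},\kk)$, then the adjunction of Proposition \ref{prop:adjoint} to identify this with $\Hom_\kk((\Diff_R\Ay{R}{Y})\otimes_{\Ay{R}{Y}}\kk,\kk)\cong\Hom_\kk(\kk Y,\kk)$, a complex with trivial differential; the minimality is absorbed into the isomorphism $(\Diff_R\Ay{R}{Y})\otimes_{\Ay{R}{Y}}\kk\cong\kk Y$. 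You instead kill the differential of $\Der_R(\Ay{R}{Y},\kk)$ by hand, combining the facts that $\del(\beta)$ is again a derivation, that derivations are determined by their values on $Y$, color $R$-linearity, and $\del(Y)\subseteq R_+\Ay{R}{Y}$; this is more elementary, bypasses the $\Diff$/adjunction machinery entirely, and makes the role of minimality explicit. For part (2) the difference is sharper: the paper proves linear independence of $\Theta$ by evaluating a dependence relation at the variables $y_{i_j}$ and then concludes by matching dimensions with part (1), whereas you track each $\theta_i$ through the quasi-isomorphism, compute $\phi\circ\vartheta_i=y_i^{*}$ using Lemma \ref{lem:UpTriang} and degree reasons, and conclude that $\Theta$ maps bijectively onto the dual basis --- giving spanning and independence at once with no separate counting argument. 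The one point both treatments leave as bookkeeping is the same: matching the tridegree $(-i,\sigma^{-1},-j)$ of a dual derivation with the indices of $\pi^{i,\sigma,j}(R)$ and $\varepsilon_{i,\sigma,j}(R)$ under the conventions of Remark \ref{rem:threeGradings2}, which you correctly flag.
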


\begin{proof}
For the first claim, notice that
$(\Diff_R\Ay{R}{Y})\otimes_{\Ay{R}{Y}}\kk\cong \kk Y$ as complexes with trivial 
differential by Proposition \ref{prop:Diff}. There is a chain of graded 
isomorphisms
\begin{align*}
\pi(R)&=\mathrm{H}(\Der_R(\Ay{R}{Y},\Ay{R}{Y}))\\
&\cong\mathrm{H}(\Der_R(\Ay{R}{Y},\kk))\\
&\cong\mathrm{H}(\Hom_{\Ay{R}{Y}}(\Diff_R\Ay{R}{Y},\kk))\\
&\cong\mathrm{H}(\Hom_{\kk}((\Diff_R\Ay{R}{Y})\otimes_{\Ay{R}{Y}}\kk,\kk))\\
&\cong\mathrm{H}(\Hom_{\kk}(\kk Y,\kk))\\
&=\Hom_{\kk}(\kk Y,\kk)
\end{align*}
where the first isomorphism follows by Corollary \ref{cor:derQuism}, the second by 
Proposition \ref{prop:Diff}, the third by Proposition \ref{prop:adjoint}, and the 
fourth by the observation at the start of the proof. Now we are done by definition of deviation.

For the second claim, by Lemma \ref{lem:UpTriang}(1) there are derivations $\vartheta_i$ such that $\vartheta_i(y_j)=\delta_{ij}$ for $j\leq i$. If $\vartheta_{i_1},\ldots,\vartheta_{i_m}$ all have the same (tri-)degree and $\sum_{k}\beta_k\theta_{i_k}=0$ for some $\beta_k\in\kk$, then by evaluating at $y_{i_j}$ we deduce that $\beta_j=0$ for all $j=1,\ldots,m$. This proves that $\Theta$ is a linearly independent set. By part (1) we know that in each (tri-)degree $\Theta$ has the same dimension of $\pi(R)$ and therefore forms a $\kk$-basis.
\end{proof}

\section{Ext Algebra} \label{sec:extAlgebra}

In this section, we study the graded $\kk$-algebra $\Ext_R(\kk,\kk)$, which is the homology
of the DG algebra $\Hom_R(\Ay{R}{Y},\Ay{R}{Y})$.  In what follows, we denote the augmentation map 
$\Ay{R}{Y}\rightarrow\kk$ by $\epsilon$.

\begin{remark} \label{rem:leftExtIsRightExtOp}
Since $\Hom_R(\Ay{R}{Y},\Ay{R}{Y})$ denotes the set of left color $R$-linear maps from $\Ay{R}{Y}$
to itself, Remark \ref{rem:rightLinear} shows that the Ext algebra mentioned above is $\Ext_R(\kk_R,\kk_R)$,
the Ext algebra of $\kk$ over $R$ where $\kk$ is considered a \emph{right} $R$-module.

However, there are isomorphisms (cf. \cite[Pg. 5]{PnP}) 
\[
\Ext_R({}_R\kk,{}_R\kk) \cong \Ext_{R^{\op}}({}_{R^{\op}}\kk,{}_{R^{\op}}\kk)^{\op} \cong \Ext_R(\kk_R,\kk_R)^{\op}
\]
as graded $\kk$-algebras so that one may convert the descriptions of right Ext algebras given below
into a description of left Ext algebras by taking the opposite ring.  This is especially important when
comparing our results with those that exist in the literature.
\end{remark}

\begin{theorem}
The inclusion $\Der_R(\Ay{R}{Y},\Ay{R}{Y})\rightarrow\Hom_R(\Ay{R}{Y},\Ay{R}{Y})$ induces an injective morphism of graded color Lie algebras
\[
\iota:\pi(R)\rightarrow\Lie(\Ext_R(\kk,\kk)).
\]
\end{theorem}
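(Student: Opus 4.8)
The plan is to recognize $\iota$ as the map induced on homology by the inclusion of color DG Lie algebras
$$\Der_R(\Ay{R}{Y},\Ay{R}{Y}) \subseteq \Lie(\Hom_R(\Ay{R}{Y},\Ay{R}{Y}))$$
established in the preceding lemma. Passing to homology sends a color DG Lie algebra to a graded color Lie algebra and a morphism to a morphism; moreover, since the bracket and square map on $\Lie(\Hom_R(\Ay{R}{Y},\Ay{R}{Y}))$ are induced from its associative multiplication, one has $\mathrm{H}(\Lie(\Hom_R(\Ay{R}{Y},\Ay{R}{Y}))) = \Lie(\mathrm{H}(\Hom_R(\Ay{R}{Y},\Ay{R}{Y}))) = \Lie(\Ext_R(\kk,\kk))$. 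Thus $\iota$ is automatically a morphism of graded color Lie algebras, and it remains only to prove that it is injective.

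To do this, I would compare everything against the residue field via the augmentation $\epsilon : \Ay{R}{Y} \to \kk$, which is a surjective quasi-isomorphism because $\Ay{R}{Y}$ resolves $\kk$. Post-composition with $\epsilon$ produces a commutative square of complexes
$$
\xymatrix{
\Der_R(\Ay{R}{Y},\Ay{R}{Y}) \ar[r] \ar[d] & \Hom_R(\Ay{R}{Y},\Ay{R}{Y}) \ar[d] \\
\Der_R(\Ay{R}{Y},\kk) \ar[r] & \Hom_R(\Ay{R}{Y},\kk)
}
$$
whose horizontal maps are the inclusions of derivations into linear maps. The left vertical map is a quasi-isomorphism by Corollary \ref{cor:derQuism}, and the right vertical map is a quasi-isomorphism by Proposition \ref{prop:homPreservesQuism} (using that $\Ay{R}{Y}$ is semi-free over $R$). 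Since both verticals are isomorphisms on homology and the square commutes, $\iota$ is injective if and only if the bottom inclusion is injective on homology.

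The last point is where minimality of the acyclic closure enters, and I expect it to be the only genuine subtlety. Since $\Ay{R}{Y}$ is a minimal resolution, its differential satisfies $\del(\Ay{R}{Y}) \subseteq R_+\Ay{R}{Y}$, and $R_+$ annihilates $\kk$. Using the Leibniz rule together with the fact that a color derivation vanishes on $R$, one checks that $D\circ\del = 0$ for every $D \in \Der_R(\Ay{R}{Y},\kk)$, and likewise $\phi\circ\del = 0$ for every $\phi \in \Hom_R(\Ay{R}{Y},\kk)$; hence both complexes along the bottom of the square carry the zero differential. Their homology therefore coincides with the complexes themselves, so the bottom map is simply the tautological inclusion of $\Der_R(\Ay{R}{Y},\kk)$ as a subspace of $\Hom_R(\Ay{R}{Y},\kk)$, which is injective. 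Chasing the square — left map an isomorphism, bottom map an injection, right map an isomorphism — yields that $\iota$ is injective, completing the argument.
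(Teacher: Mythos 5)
Your proof is correct and takes essentially the same route as the paper: the same commutative square obtained by post-composing with the augmentation $\epsilon$, with the vertical maps being quasi-isomorphisms by Corollary \ref{cor:derQuism} and Proposition \ref{prop:homPreservesQuism}, and injectivity read off from the bottom row. The only difference is that you make explicit why the bottom complexes carry zero differentials (minimality of the acyclic closure, so that $R_+$ kills everything landing in $\kk$), a point the paper leaves implicit when it concludes that $\iota$ ``is therefore injective.''
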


\begin{proof}
We consider the following diagram
\begin{equation}\label{DerHomDiag}
\begin{tikzpicture}[baseline=(current  bounding  box.center)]
 \matrix (m) [matrix of math nodes,row sep=3em,column sep=4em,minimum width=2em] {
 \Der_R(\Ay{R}{Y},\Ay{R}{Y})&\Hom_R(\Ay{R}{Y},\Ay{R}{Y})\\
 \Der_R(\Ay{R}{Y},\kk)&\Hom_R(\Ay{R}{Y},\kk)\\};
 \path[->] (m-1-1) edge (m-1-2);
 \path[->] (m-2-1) edge (m-2-2);
 \path[->] (m-1-1) edge node[left] {$\Der_R(\Ay{R}{Y},\epsilon)$} node[right] {$\simeq$} (m-2-1);
 \path[->] (m-1-2) edge node[right] {$\Hom_R(\Ay{R}{Y},\epsilon)$} node[left] {$\simeq$} (m-2-2);
\end{tikzpicture}
\end{equation}
where the top and bottom maps are just inclusions and the left and right maps are the quasi-isomorphisms given by Corollary \ref{cor:derQuism} and Proposition \ref{prop:homPreservesQuism} respectively. A straightforward computation shows that this diagram is commutative. By taking (co-)homology in the diagram it follows that the bottom map is (isomorphic to) $\iota$, which is therefore injective.
\end{proof}

We prove a version of the Poincar\'{e}-Birkhoff-Witt Theorem for the color Lie 
algebra $\pi(R)$. In the next theorem we will use the same notation used in
Theorem \ref{thm:BasisOfPi}.

\begin{theorem}\label{thm:Ext=Upi}
The normal monomials on $\Theta$ form a $\kk$-basis of $U\pi(R)$. Moreover the universal extension of the map
\[
\iota:\pi(R)\rightarrow\Lie(\Ext_R(\kk,\kk)),
\]
is an isomorphism of associative algebras
\[
\iota':U\pi(R)\rightarrow\Ext_R(\kk,\kk).
\]

\end{theorem}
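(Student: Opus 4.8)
The plan is to establish both assertions at once by transporting the normal monomials $\theta^I$ on $\Theta$ into $\Ext_R(\kk,\kk)$ via $\iota'$ and detecting their linear independence there. As already observed, the normal monomials on $\Theta$ span $U\pi(R)$, so the first assertion amounts to their linear independence; I will obtain this for free once $\iota'$ is shown to be injective. Thus the whole theorem reduces to proving that $\iota'$ carries the spanning set $\{\theta^I\}$ bijectively onto a $\kk$-basis of $\Ext_R(\kk,\kk)$.

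To make $\Ext_R(\kk,\kk)$ explicit, I would use that $\Ay{R}{Y}$ is a minimal free resolution of $\kk$ over $R$. By Proposition \ref{prop:homPreservesQuism} the augmentation $\epsilon$ induces a quasi-isomorphism $\Hom_R(\Ay{R}{Y},\epsilon):\Hom_R(\Ay{R}{Y},\Ay{R}{Y})\to\Hom_R(\Ay{R}{Y},\kk)$, and minimality makes the differential on the target vanish, so $\Ext_R(\kk,\kk)\cong\Hom_R(\Ay{R}{Y},\kk)$, whose underlying trigraded $\kk$-vector space is dual to the space spanned by the normal monomials $\{y^{(H)}\}$ in $Y$. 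Since $\iota'$ is the universal extension of $\iota$, it is an algebra map, and the product on $\Ext_R(\kk,\kk)$ is induced by composition of homomorphisms; hence $\iota'(\theta^I)=\cls{\vartheta^I}$, where $\vartheta^I=\vartheta_q^{i_q}\cdots\vartheta_1^{i_1}$ is the composite of the chain derivations of Lemma \ref{lem:UpTriang}.

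I would then compute the image of $\cls{\vartheta^I}$ under the quasi-isomorphism above: it is the class of the functional $\epsilon\vartheta^I$, and Lemma \ref{lem:UpTriang}\eqref{lem:UpTriang2} gives $\epsilon\vartheta^I(y^{(H)})=0$ for $H<I$ and $=1$ for $H=I$. Arranging rows by $I$ and columns by $H$ in graded lexicographic order, the functionals $\{\epsilon\vartheta^I\}$ form a triangular system with ones on the diagonal, hence are linearly independent in $\Hom_R(\Ay{R}{Y},\kk)$. The index sequences $I$ permitted for normal monomials on $\Theta$ coincide exactly with those for the divided-power monomials $y^{(I)}$ --- the constraint $i_j\in\{0,1\}$ occurs precisely when $|\theta_j|=|y_j|$ is odd --- and the degree assignments match, so in each tridegree the number of $\theta^I$ equals the dimension of the corresponding component of $\Hom_R(\Ay{R}{Y},\kk)$. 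Linear independence together with this dimension count shows that $\{\iota'(\theta^I)\}$ is a $\kk$-basis of $\Ext_R(\kk,\kk)$.

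Finally, since $\iota'$ sends the spanning set $\{\theta^I\}$ of $U\pi(R)$ onto a basis of $\Ext_R(\kk,\kk)$, the $\{\theta^I\}$ are forced to be linearly independent, proving the Poincar\'e--Birkhoff--Witt statement, and $\iota'$ is simultaneously bijective; being an algebra homomorphism, it is an isomorphism. The step I expect to be the main obstacle is the careful justification that $\iota'(\theta^I)=\cls{\vartheta^I}$: one must verify that the composite $\vartheta^I$ is a cycle in $\Hom_R(\Ay{R}{Y},\Ay{R}{Y})$ whose class is the product of the $\cls{\vartheta_j}$, and that the defining relations of $U\pi(R)$ --- in particular $z\otimes z=z^{[2]}$ for $|z|$ odd, which restricts odd generators to first powers in a normal monomial --- are compatible with the composition product in cohomology, so that no spurious terms arise.
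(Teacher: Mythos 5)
Your proposal is correct and follows essentially the same route as the paper's own proof: both identify $\Ext_R(\kk,\kk)$ with the graded dual $\Hom_R(\Ay{R}{Y},\kk)$ via minimality of the acyclic closure, realize $\iota'(\theta^I)$ as the class of the functional $\epsilon\vartheta^I$, and use the triangularity from Lemma \ref{lem:UpTriang}\eqref{lem:UpTriang2} against the normal monomials $y^{(H)}$ to get linear independence, concluding with the same dimension count to obtain both the PBW statement and bijectivity of $\iota'$. The point you flag as the main obstacle (that products in $\Ext$ of the classes $\cls{\vartheta_j}$ are represented by the composites $\vartheta^I$) is handled in the paper at the same level of detail, via the compatibility of the quasi-isomorphisms in diagram \eqref{DerHomDiag} with composition.
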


\begin{proof}
We identify $\iota$ with the inclusion
\[
\iota:\Der_R(\Ay{R}{Y},\kk)\rightarrow\Lie(\Hom_R(\Ay{R}{Y},\kk)).
\]
By Theorem \ref{thm:BasisOfPi}, a $\kk$-basis of $\Der_R(\Ay{R}{Y},\kk)$ is given by 
\[
\Theta=\{\epsilon\vartheta_i\mid\vartheta_i\in\Der_R(\Ay{R}{Y},\Ay{R}{Y}),\vartheta_i(y_j)=\delta_{ij}\;\mathrm{for}\;j\leq i\}_{i\geq 1}.
\]
Since $\Hom_R(\Ay{R}{Y},\kk)$ is the graded $\kk$-dual of $\Ay{R}{Y}$, the ``dual elements'' to the normal monomials of the acyclic  closure form a $\kk$-basis.

We will use $H$ and $I$ to denote indexing sequences of normal monomials (in both $U\pi(R)$
and $\Ay{R}{Y}$). Denoting $\epsilon\vartheta_i$ by
$\theta_i$, let $\theta^I\in U\pi(R)$ be a normal monomial.  
Note that since $\iota'$ is the universal extension of the inclusion $\iota$,
$\iota'$ sends a normal monomial $\theta^I$ to itself. By Lemma \ref{lem:UpTriang},
$\theta^I(y^H)=0$ if $H<I$ and $\theta^I(y^H) = 1$ if $H=I$. 
Therefore the coordinate vectors of 
the normal monomials on the elements of $\Theta$ with respect to the dual basis of 
the normal monomials of the acyclic closure are linearly independent. We had 
previously noted that normal monomials span, hence they are a $\kk$-basis of
$U\pi(R)$.

To prove that $\iota'$ is an isomorphism we first notice that it is injective since
the images of the normal monomials on $\Theta$ are $\kk$-linearly independent. By 
Theorem \ref{thm:BasisOfPi}(1) and since the normal monomials on $\Theta$ are
a $\kk$-basis of $U\pi(R)$, we deduce that in each degree the algebras $U\pi(R)$ 
and $\Ext_R(\kk,\kk)$ have the same $\kk$-dimension, so that $\iota'$ is an 
isomorphism.
\end{proof}

We recall the definition of graded color Hopf algebra. In our applications the Hopf algebras will come equipped with an internal grading which, for the sake of readability, will be dropped in the definition of color Hopf algebra. 
\begin{definition}
Let $G$ be an abelian group with a bicharacter $\e:G\times G\rightarrow\kk^*$. Let $H=\bigoplus_{(i,\sigma)\in \mathbb{Z}\times G} H^{(i,\sigma)}$ be a 
$(\mathbb{Z}\times G)$-graded connected $\kk$-algebra with product $m$ and unit $u$.  We denote
$m(a\otimes b)$ by $ab$.  If $H$ comes equipped with an internal grading, then all the maps that follow in this definition are assumed to be compatible with respect to this grading and all the elements that follow are assumed to be homogeneous. If $a\in H^{(i,\sigma)}$, then we denote by $|a|$ its 
$\mathbb{Z}$-degree, i.e. $|a|=i$.  If $H$ is a $(\mathbb{Z}\times G)$-graded coalgebra with coproduct $\Delta$ and counit 
$\epsilon$, then we say that $H$ is a \emph{graded color coalgebra} if
\[
\epsilon(b)=\e(a,b)\epsilon(b),\mathrm{and}\;
\epsilon(a)=\e(a,b)\epsilon(a),\quad\mathrm{for\;all\;}a\in H^{(i,\sigma)}, b\in H^{(j,\tau)}.
\]
We let $H\otimes^\e H$ be the $(\bbZ\times G)$-graded algebra which is
$H \otimes H$ as a vector space, with product given by the following,
for $a\in H^{(i,\sigma)},b\in H^{(j,\tau)}, c\in H^{(k,\rho)}, d\in H^{(l,\gamma)}$:
\[
(a\otimes b)(c\otimes d)=(-1)^{|b||c|}\e(b,c)(ac)\otimes (bd).
\]

The algebra $H$ is a \emph{graded color bialgebra} if $\Delta : H \to H \otimes^\e H$ and $\epsilon$
are maps of $(\mathbb{Z}\times G)$-graded algebras.

A \emph{graded color Hopf algebra} is a graded color bialgebra with an antipode map $S$, i.e. with a map $S:H\rightarrow H$ such that
\[
m(S\otimes \id_H)\Delta=u\epsilon=m(\id_H\otimes S)\Delta.
\]

\end{definition}

\begin{remark}
A color Hopf algebra is just a special case of the more general notion
of braided Hopf algebra; see \cite{Khar}.
\end{remark}

\begin{remark}
Let $\pi(R)$ be the graded color Lie algebra of Definition \ref{defn:piR}. Then $U\pi(R)$ is a graded color Hopf algebra with the following structure:
\begin{align*}
\Delta(x)=x\otimes1+1\otimes x,& \quad x\in\pi(R),\\
\epsilon(x)=0,& \quad x\in\pi(R),\\
S(x)=-x,& \quad x\in\pi(R).
\end{align*}
Where $\Delta$ and $\epsilon$ are extended to all of $U\pi(R)$ multiplicatively and $S$ is extended to all of $U\pi(R)$ color anti-multiplicatively, i.e.
\[
S(ab)=(-1)^{|a||b|}\e(a,b)S(b)S(a),\quad a,b\in U\pi(R)\;\mathrm{trihomogeneous}.
\]
\end{remark}

A remarkable consequence of Theorem \ref{thm:Ext=Upi} and the previous remark is
\begin{corollary}
The algebra $\Ext_R(\kk,\kk)$ is a graded color Hopf algebra.
\end{corollary}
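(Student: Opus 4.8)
The plan is to transport the graded color Hopf algebra structure of $U\pi(R)$, described in the preceding remark, across the isomorphism
\[
\iota' : U\pi(R) \to \Ext_R(\kk,\kk)
\]
of Theorem \ref{thm:Ext=Upi}. Since $\iota'$ is already an isomorphism of associative $\kk$-algebras, all that remains is to check that it is compatible with the coproduct, counit, and antipode in a manner respecting the $(\bbZ \times G)$-grading.

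First I would record that $\iota'$ is in fact an isomorphism of $(\bbZ \times G)$-graded algebras, not merely of associative ones: it is the universal extension of the trihomogeneous graded color Lie algebra morphism $\iota$, and hence preserves both the cohomological and the group gradings. This observation is the crux of the argument, since the twisted tensor product $(-)\otimes^\e(-)$ used to define the color bialgebra structure depends only on these gradings, through the factor $(-1)^{|b||c|}\e(b,c)$. Consequently $\iota' \otimes \iota'$ is itself an isomorphism of $(\bbZ\times G)$-graded algebras
\[
U\pi(R) \otimes^\e U\pi(R) \xra{\;\simeq\;} \Ext_R(\kk,\kk) \otimes^\e \Ext_R(\kk,\kk);
\]
that is, $\otimes^\e$ is functorial on grading-preserving algebra isomorphisms.

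Granting this, I would define the comultiplication, counit, and antipode on $\Ext_R(\kk,\kk)$ by conjugation:
\[
\Delta = (\iota' \otimes \iota')\,\Delta_{U\pi(R)}\,(\iota')^{-1},\qquad
\epsilon = \epsilon_{U\pi(R)}\,(\iota')^{-1},\qquad
S = \iota'\,S_{U\pi(R)}\,(\iota')^{-1}.
\]
Each graded color bialgebra axiom and each antipode identity is the commutativity of a diagram built from the structure maps, the multiplication, the unit, and the twist isomorphism. Since these diagrams commute for $U\pi(R)$ and every arrow has been conjugated by $\iota'$ and $\iota'\otimes\iota'$ (which intertwine the respective multiplications and twists by the previous paragraph), the corresponding diagrams for $\Ext_R(\kk,\kk)$ commute as well.

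The only genuine point to verify is the functoriality of $\otimes^\e$ asserted above; once the grading-preservation of $\iota'$ is in hand, this reduces to a direct check that $\iota'\otimes\iota'$ respects the twisted multiplication, after which the rest is formal transport of structure. I therefore expect no substantive obstacle beyond making the grading bookkeeping explicit.
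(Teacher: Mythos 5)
Your proposal is correct and follows essentially the same route as the paper: the paper derives this corollary directly from Theorem \ref{thm:Ext=Upi} and the remark giving $U\pi(R)$ its graded color Hopf structure, i.e., by transporting that structure across the isomorphism $\iota'$, exactly as you do. Your write-up merely makes explicit the grading-preservation of $\iota'$ and the functoriality of $\otimes^\e$ that the paper leaves implicit.
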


\section{Lie operations on $\pi^1(R)$}\label{sec:pi}
In this section, we carry out the computations necessary to compute the
bracket on $\pi(R)$ in homological degree one and obtain results analogous
to those of Sj\"odin \cite{Sjo}.  The theorem statement will come at the
end of the section, after all the necessary notation has been introduced.

Recall that $R=Q/I$ with $I=(f_1,\ldots,f_c)$ a homogeneous ideal generated by normal elements.  We also assume that $I \subseteq R_{\geq 2}$,
and therefore for each $j = 1,\dots,c$, there exist homogeneous and $G$-homogeneous elements $a_{h,i,j} \in Q$ such that
\[
f_j=\sum_{1\leq h\leq i\leq n}a_{h,i,j}x_hx_i.
\]
If $f\in Q$ then we denote the image in $R$ by $\bar{f}$.
Let $K^R(\overline{\mathbf{x}})$ be the Koszul complex on $\bar{x}_1,\ldots,\bar{x}_n$
as in Definition \ref{def:koszulComplex}, i.e.,
\[
K^R(\overline{\mathbf{x}})=\Ay{R}{y_1,\ldots,y_n\mid\partial (y_i)=\bar{x}_i}.
\]
Let $T_2$ be the complex that one obtains from $K^R(\bar{\mathbf{x}})$
by killing a minimal generating set of $\mathrm{H}_1(K^R(\overline{\mathbf{x}}))$, 
i.e.,
\[
T_2=\Ay{K^R(\overline{\mathbf{x}})}{y_{n+1},\ldots,y_{n+c}\mid\partial (y_{n+j})=\sum_{1\leq h\leq i\leq n}\bar{a}_{h,i,j}\bar{x}_hy_i}.
\]
Adjoining the variables $y_1,\dots,y_{n+c}$ to $R$ are the first two steps
in constructing the acyclic closure of $\kk$ over $R$, which we
denote by $\Ay{R}{Y}$.  We continue our convention of numbering the variables 
of an acyclic closure in a manner which respects the homological
degree.

Let $\vartheta_l\in\Der_R(K^R(\overline{\mathbf{x}}),K^R(\overline{\mathbf{x}}))$
be such that $\vartheta_l(y_j)=\delta_{l,j}$ for $j=1,\ldots,n$.
To extend $\vartheta_l$ to a derivation of $T_2$, 
notice that:
\begin{equation*}
\vartheta_l\partial(y_{n+j})=\vartheta_l(\sum_{h\leq i}\bar{a}_{h,i,j}\bar{x}_hy_i)
=\sum_{h=1}^l\bar{a}_{h,i,j}\bar{x}_h
=\partial(\sum_{h=1}^l\bar{a}_{h,i,j}y_h).
\end{equation*}
Therefore by setting $\vartheta_l(y_{n+j})=-\sum_{h=1}^l\bar{a}_{h,i,j}y_h$ for $l=1,\ldots,n$ and 
$j=1,\ldots,c$ (and extending so that the color Leibniz rule holds) we obtain an extension to a 
derivation of $T_2$ that commutes with the differential. We notice that 
$\gdeg{\vartheta_l}=\gdeg{y_l}^{-1}$ and therefore, if $\gdeg{y_l}=\sigma_l$ and
$\gdeg{y_i}=\sigma_i$ we have that $\e(\sigma_l,\sigma_i)=\e(\sigma_l^{-1},\sigma_i^{-1})$, i.e.\
$\e(\vartheta_l,\vartheta_i)=\e(y_l,y_i)$. We compute $[\vartheta_l,\vartheta_i](y_{n+j})$ for $l<i$ and
$j=1,\ldots,c$:
\begin{align*}
[\vartheta_l,\vartheta_i](y_{n+c})&=(\vartheta_l\vartheta_i+\e(\vartheta_l,\vartheta_i)\vartheta_i\vartheta_l)(y_{n+c})\\
&=\vartheta_l(-\sum_{h=0}^i\bar{a}_{h,i,j}y_h)+\e(\vartheta_l,\vartheta_i)\vartheta_i(-\sum_{h=0}^l\bar{a}_{h,l,j}y_h)=-\bar{a}_{l,i,j}.
\end{align*}
For the square we have
\begin{align*}
\vartheta_i\vartheta_i(y_{n+c})&=\vartheta_i(-\sum_{h=0}^i\bar{a}_{h,i,j}y_h)=-\bar{a}_{i,i,j}.
\end{align*}
We collect the previous results in the following theorem:
\begin{theorem}
Let $Q = \kk_\q[x_1,\dots,x_n]$ be a skew polynomial ring,
$R = Q/I$ with $I = (f_1,\dots,f_c) \subseteq Q$ an ideal with each
$f_i$ normal, homogeneous and of internal degree at least two, and let $\epsilon$ denote the augmentation from $R$ to $\kk$.
For each $j$, write $f_j = \sum_{1\leq h\leq i\leq n} a_{h,i,j} x_hx_i$ for normal, homogeneous $a_{h,i,j} \in Q$.
Let $\Ay{R}{Y}$ be the acyclic closure of $\kk$ over $R$, and let
$\theta_i = \epsilon\vartheta_i \in \Der_R(\Ay{R}{Y},\kk)$, where $\vartheta_i$ is the derivation corresponding
to the variable $y_i$.  Then for all $1 \leq l < i \leq n$, one has equalities:
\begin{equation}\label{eqn:Pi1}
[\theta_l,\theta_i]=-\sum_{j=1}^c\epsilon(\bar{a}_{l,i,j})\theta_{n+j},\quad\mathrm{for}\;l<i\quad\mathrm{and}\quad \theta_i^{[2]}=-\sum_{j=1}^c\epsilon(\bar{a}_{i,i,j})\theta_{n+j}.
\end{equation}
\end{theorem}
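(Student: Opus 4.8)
The plan is to compute the bracket and square directly on the derivation representatives and then read off the coefficients in the basis of $\pi^2(R)$ furnished by Theorem \ref{thm:BasisOfPi}. Recall that $\pi(R) = H(\Der_R(\Ay{R}{Y},\Ay{R}{Y}))$ carries the color Lie structure induced by the commutator bracket and squaring of derivations, so that $[\theta_l,\theta_i] = \cls{[\vartheta_l,\vartheta_i]}$ and $\theta_i^{[2]} = \cls{\vartheta_i^2}$. Since $\vartheta_l$ and $\vartheta_i$ each have homological degree one, both $[\vartheta_l,\vartheta_i]$ and $\vartheta_i^2$ are derivations of homological degree two, whence their classes lie in $\pi^2(R)$. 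By Theorem \ref{thm:BasisOfPi}, $\pi^2(R)$ has $\kk$-basis $\{\theta_{n+1},\dots,\theta_{n+c}\}$, the classes associated to the degree-two variables adjoined in $T_2$. Consequently I may write $[\theta_l,\theta_i] = \sum_{j=1}^c c_j\theta_{n+j}$ and $\theta_i^{[2]} = \sum_{j=1}^c d_j\theta_{n+j}$ for scalars $c_j,d_j \in \kk$, and the task reduces to identifying these scalars.

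To extract the coefficients I would pass along the quasi-isomorphism $\Der_R(\Ay{R}{Y},\epsilon)$ of Corollary \ref{cor:derQuism} to the complex $\Der_R(\Ay{R}{Y},\kk)$, under which $\theta_m \mapsto \epsilon\vartheta_m$. Because $\Ay{R}{Y}$ is a minimal resolution, its differential takes values in $R_+\Ay{R}{Y}$, so the differential on $\Der_R(\Ay{R}{Y},\kk)$ vanishes and this complex is identified with $\Hom_\kk(\kk Y,\kk)$, exactly as in the proof of Theorem \ref{thm:BasisOfPi}. A degree-two element of $\Der_R(\Ay{R}{Y},\kk)$ is therefore determined by its values on the degree-two variables $y_{n+1},\dots,y_{n+c}$, and since $\epsilon\vartheta_{n+j}(y_{n+k}) = \delta_{jk}$ the functionals $\{\epsilon\vartheta_{n+j}\}$ form the basis dual to $\{y_{n+j}\}$. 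Applying $\epsilon$ and evaluating at $y_{n+j}$ then gives $c_j = \epsilon\big([\vartheta_l,\vartheta_i](y_{n+j})\big)$ and $d_j = \epsilon\big(\vartheta_i^2(y_{n+j})\big)$.

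It remains only to substitute the values obtained in the computations immediately preceding the theorem, namely $[\vartheta_l,\vartheta_i](y_{n+j}) = -\bar a_{l,i,j}$ and $\vartheta_i^2(y_{n+j}) = -\bar a_{i,i,j}$; these are carried out on $T_2$, but since $[\vartheta_l,\vartheta_i]$ and $\vartheta_i^2$ have homological degree two they annihilate the degree-one variables, and their values on the $y_{n+j}$ depend only on this second stage, so the computation is valid in the full acyclic closure. Substituting yields $c_j = -\epsilon(\bar a_{l,i,j})$ and $d_j = -\epsilon(\bar a_{i,i,j})$, which is precisely \eqref{eqn:Pi1}. The main point requiring care is the middle paragraph: one must check that the passage to $\Der_R(\Ay{R}{Y},\kk)$ both preserves the relevant classes and trivializes the differential, so that evaluation on the degree-two variables legitimately reads off coordinates in the basis $\{\theta_{n+j}\}$. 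This is where the minimality of the acyclic closure, together with the identification of $\Der_R(\Ay{R}{Y},\kk)$ with $\Hom_\kk(\kk Y,\kk)$, is essential.
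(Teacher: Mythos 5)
Your first two paragraphs are sound, and they carefully justify a step the paper leaves implicit: since $[\vartheta_l,\vartheta_i]$ and $\vartheta_i^2$ are chain derivations of cohomological degree two, their classes lie in $\pi^2(R)$; by minimality of the acyclic closure the quasi-isomorphism $\Der_R(\Ay{R}{Y},\epsilon)$ identifies $\pi(R)$ with $\Der_R(\Ay{R}{Y},\kk)\cong\Hom_\kk(\kk Y,\kk)$ carrying zero differential, so coefficients in the basis $\{\theta_{n+1},\dots,\theta_{n+c}\}$ are read off by evaluating at the degree-two variables $y_{n+j}$. Your observation that such evaluations depend only on the stage $T_2$ of the construction, so that computations there remain valid in the full acyclic closure, is also correct.

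The gap is in your third paragraph. The identities $[\vartheta_l,\vartheta_i](y_{n+j})=-\bar a_{l,i,j}$ and $\vartheta_i^2(y_{n+j})=-\bar a_{i,i,j}$, which you import from ``the computations immediately preceding the theorem,'' are not part of the given statement --- they \emph{are} the paper's proof: the theorem is presented there precisely as a collection of those computations. Moreover, by your own duality framework these identities are equivalent to the formula being proven, so quoting them leaves the substantive content unestablished. To fill this in you must first note that Lemma \ref{lem:UpTriang} only pins down $\vartheta_l$ on variables of homological degree at most one, so you need to produce a value on the degree-two variables making $\vartheta_l$ a chain derivation: since
$\vartheta_l(\partial y_{n+j})=\vartheta_l\bigl(\sum_{h\leq i}\bar a_{h,i,j}\bar x_h y_i\bigr)=\sum_{h=1}^{l}\bar a_{h,l,j}\bar x_h=\partial\bigl(\sum_{h=1}^{l}\bar a_{h,l,j}y_h\bigr)$
and a chain derivation of odd degree satisfies $\partial\vartheta_l=-\vartheta_l\partial$, one may take $\vartheta_l(y_{n+j})=-\sum_{h=1}^{l}\bar a_{h,l,j}y_h$ (any other choice differs by a cycle and does not change the homology class, so this choice is harmless). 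Only then can you compute, for $l<i$,
$[\vartheta_l,\vartheta_i](y_{n+j})=\vartheta_l\vartheta_i(y_{n+j})+\e(\vartheta_l,\vartheta_i)\vartheta_i\vartheta_l(y_{n+j})=\vartheta_l\bigl(-\sum_{h=1}^{i}\bar a_{h,i,j}y_h\bigr)+0=-\bar a_{l,i,j}$,
where the second term vanishes because $\vartheta_i$ kills $y_h$ for $h\leq l<i$, and similarly $\vartheta_i^2(y_{n+j})=\vartheta_i\bigl(-\sum_{h=1}^{i}\bar a_{h,i,j}y_h\bigr)=-\bar a_{i,i,j}$. With these two steps supplied, your argument is complete and coincides with the paper's.
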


\section{Skew Complete Intersections} \label{sec:skewCIs}

\begin{definition}\label{defn:QCI}
We say that the ring $Q/I$ is a \emph{skew complete intersection} if $Q$ is a skew polynomial ring and $I$ is a two-sided ideal generated by a regular sequence of homogeneous normal elements.
\end{definition}
\begin{remark}
The definition of quantum complete intersection appearing in \cite{BerOpp} requires the ideal $I$ to be generated by powers of the variables of $Q$.
Definition \ref{defn:QCI} generalizes this definition.
\end{remark}
\begin{definition}
Let $\q$ be a multiplicatively antisymmetric matrix. A \emph{skew exterior algebra} is an algebra of the form
\[
\qwedge\kk^n=\frac{\kk_\q[x_1,\ldots,x_n]}{(x_1^2,\ldots,x_n^2)}.
\]
We consider it a DG algebra with zero differential and graded cohomologically
with $|x_i|=1$ for all $i$'s.
\end{definition}

\begin{theorem}\label{thm:ResQCI}
Let $R=Q/I$ be a skew complete intersection with $I$ generated by
$\{f_1,\ldots,f_c\}$ with each $f_i$ normal, homogeneous of internal degree at least two.
Let
\[
K^R(\overline{\mathbf{x}})=\Ay{R}{y_1,\ldots,y_n\mid\partial (y_i)=\bar{x}_i}.
\]
If $f_j=\sum_{i}a_{i,j}x_i$, then $\mathrm{H_1}(K^R(\overline{\mathbf{x}}))$ is generated by the cycles $\sum_{i=1}^n\bar{a}_{i,j}y_i$ for $j=1,\ldots,c$. Moreover an acyclic closure of $\kk$ over $R$ is
\begingroup
\arraycolsep=1.4pt
\[
\Ay{R}{Y}=\Ay{R}{y_1,\ldots,y_{n+c}\left| \begin{array}{llll}\partial  (y_i) & = & \bar{x}_i & ~\mathrm{for}\;i=1,\ldots,n, \\
\partial (y_{n+j}) & = & \sum_{i=1}^n\bar{a}_{i,j}y_i & ~\mathrm{for}\;j=1,\ldots,c\end{array}\right.}.
\]
\endgroup
\end{theorem}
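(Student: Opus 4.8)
The plan is to prove the two displayed assertions in turn and then recognise the resulting complex as an acyclic closure via Definition~\ref{def:acyclicClosure}. Throughout I write $K^Q(\mathbf{x}) = \Ay{Q}{y_1,\dots,y_n \mid \partial(y_i)=x_i}$ for the Koszul complex over $Q$; since $x_1,\dots,x_n$ is a regular sequence of normal elements, the corollary to Theorem~\ref{thm:RegEl} shows $K^Q(\mathbf{x})$ is acyclic, resolving $\kk$ over $Q$. A direct computation gives $\partial(z_j)=\sum_{i=1}^n \bar{a}_{i,j}\bar{x}_i = \bar{f}_j = 0$, so each $z_j$ is a cycle; it is $G$-homogeneous, hence killable, and the iterated construction of $\Ay{R}{Y}$ is legitimate by Propositions~\ref{prop:stillNormal} and~\ref{prop:colorDGAy}.

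First I would show that the classes $\cls{z_j}$ generate $\mathrm{H}_1(K^R(\overline{\mathbf{x}}))$, following the lifting argument. Given a cycle $\zeta = \sum_i g_i y_i \in Z_1(K^R(\overline{\mathbf{x}}))$, lift each $g_i$ to $\widetilde{g}_i \in Q$; then $\sum_i \widetilde{g}_i x_i$ lies in $I$, and since the $f_j$ are normal, $I$ is the left ideal they generate, so $\sum_i \widetilde{g}_i x_i = \sum_j h_j f_j$ for some $h_j \in Q$. Setting $\widetilde{z}_j = \sum_i a_{i,j} y_i \in K^Q(\mathbf{x})$, which satisfies $\partial(\widetilde{z}_j)=f_j$, the element $\sum_i \widetilde{g}_i y_i - \sum_j h_j \widetilde{z}_j$ is a $1$-cycle of the acyclic complex $K^Q(\mathbf{x})$, hence a boundary; projecting to $R$ yields $\zeta - \sum_j \bar{h}_j z_j \in B_1(K^R(\overline{\mathbf{x}}))$. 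Finally, because $\bar{x}_i\cls{\omega}=\cls{\partial(y_i\omega)}=0$ for every cycle $\omega$, the augmentation ideal $R_+$ annihilates $\mathrm{H}_1$, so each $\bar{h}_j \cls{z_j}$ equals a $\kk$-multiple of $\cls{z_j}$; thus $\cls{z_1},\dots,\cls{z_c}$ span $\mathrm{H}_1(K^R(\overline{\mathbf{x}}))$ as a $\kk$-vector space.

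Next I would pin down the full homology algebra. Base change gives an isomorphism of complexes $K^R(\overline{\mathbf{x}}) \cong R\otimes_Q K^Q(\mathbf{x})$, so $\mathrm{H}(K^R(\overline{\mathbf{x}})) \cong \operatorname{Tor}^Q(R,\kk)$. Computing this $\operatorname{Tor}$ instead from the DG algebra resolution $K^Q(\mathbf{f})$ of $R$ (available since $\mathbf{f}$ is a regular sequence of normal elements), the complex $K^Q(\mathbf{f})\otimes_Q \kk$ has zero differential because each $f_j \in Q_+$; hence $\operatorname{Tor}^Q(R,\kk)$ is, as a graded algebra, the skew exterior algebra $\qwedge \kk^c$ on $c$ generators of homological degree one. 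In particular $\dim_\kk \mathrm{H}_1(K^R(\overline{\mathbf{x}})) = c$, so the spanning set $\cls{z_1},\dots,\cls{z_c}$ is a $\kk$-basis and therefore a \emph{minimal} generating set; moreover $\mathrm{H}(K^R(\overline{\mathbf{x}}))$ is the skew exterior algebra on these classes.

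It remains to show $\Ay{R}{Y}$ is acyclic and to verify the conditions of Definition~\ref{def:acyclicClosure}. Adjoin the divided-power variables $y_{n+1},\dots,y_{n+c}$ successively (which produces the same complex $\Ay{R}{Y}$). At the $k$-th step the class $\cls{z_k}$ is an odd, regular element of the skew exterior algebra on the remaining classes $\cls{z_k},\dots,\cls{z_c}$ --- regular because in an exterior algebra the annihilator of a generator $w$ is exactly $w$ times the algebra --- so the odd cycle-degree case of Theorem~\ref{thm:RegEl} applies and passes to the quotient by $(\cls{z_k})$; after all $c$ steps the homology is $\kk$, so $\Ay{R}{Y}$ is a free resolution of $\kk$ over $R$. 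Condition~(1) of Definition~\ref{def:acyclicClosure} holds since $I\subseteq R_{\geq 2}$ forces $\bar{x}_1,\dots,\bar{x}_n$ to minimally generate $R_+$; condition~(2) for $n=1$ is the minimal generation established above, and for $n\geq 2$ it requires $\mathrm{H}_n(\Ay{R}{Y})=0$, which holds by the acyclicity just shown. Hence $\Ay{R}{Y}$ is an acyclic closure of $\kk$ over $R$, unique up to isomorphism by Theorem~\ref{thm:uniqueness}. The main obstacle is the third paragraph: identifying the Koszul homology algebra with the skew exterior algebra on the $\cls{z_j}$ --- equivalently, checking that the balancing isomorphism for $\operatorname{Tor}^Q(R,\kk)$ respects the multiplicative structures, so that the classes $\cls{z_j}$ are genuinely regular at each stage. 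This is the skew analogue of the classical fact that the Koszul homology of a complete intersection is an exterior algebra, and it is what makes the iteration of Theorem~\ref{thm:RegEl} legitimate.
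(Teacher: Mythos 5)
Your overall strategy matches the paper's: identify the homology of $K^R(\overline{\mathbf{x}})$ with a skew exterior algebra on the classes of the cycles $z_j=\sum_i\bar a_{i,j}y_i$, note that these classes are regular, and iterate Theorem~\ref{thm:RegEl} to get acyclicity of $\Ay{R}{Y}$. Your second paragraph (the lifting argument showing the $\cls{z_j}$ generate $\mathrm{H}_1$) is correct, and your last paragraph correctly reduces the theorem to the regularity of these classes at each stage. However, the step you yourself flag as ``the main obstacle'' --- that the identification $\mathrm{H}(K^R(\overline{\mathbf{x}}))\cong\operatorname{Tor}^Q(R,\kk)\cong\qwedge\kk^c$ is an isomorphism of \emph{algebras} carrying your specific classes $\cls{z_j}$ to exterior generators --- is a genuine gap and not a verification that can be deferred: it is exactly the content of the paper's proof, and nothing in your write-up closes it. A dimension count together with an abstract algebra isomorphism does not suffice, because a nonzero degree-one element of a skew exterior algebra need not be regular, or even square-zero: in $\kk_\q[x_1,x_2]/(x_1^2,x_2^2)$ one has $(x_1+x_2)^2=(1+q_{1,2})x_2x_1$, which is nonzero unless $q_{1,2}=-1$. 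So you must know that the $\cls{z_j}$ sit inside the skew exterior algebra as ($G$-homogeneous) generators, and this requires tracking them through a multiplicative, $G$-graded comparison map.

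The paper supplies precisely this via an explicit roof of morphisms of color DG algebras
\[
\kk\otimes_Q K^Q(\mathbf{f})\xleftarrow{\;\beta\otimes K^Q(\mathbf{f})\;} K^Q(\mathbf{x})\otimes_Q K^Q(\mathbf{f})\xrightarrow{\;K^Q(\mathbf{x})\otimes\alpha\;} K^Q(\mathbf{x})\otimes_Q R\cong K^R(\overline{\mathbf{x}}),
\]
where $\alpha\colon K^Q(\mathbf{f})\to R$ and $\beta\colon K^Q(\mathbf{x})\to\kk$ are the augmentation quasi-isomorphisms. Both arrows are quasi-isomorphisms by Proposition~\ref{prop:homPreservesQuism} (the Koszul complexes being semi-free) and both are algebra maps, so they induce isomorphisms of $G$-graded algebras on homology. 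The element $\sum_i a_{i,j}y_i\otimes 1-1\otimes e_j$ is a cycle in the middle term (its boundary is $f_j\otimes 1-1\otimes f_j=0$), and it maps to $z_j$ on the right and to $-e_j$ on the left. This simultaneously identifies $\mathrm{H}(K^R(\overline{\mathbf{x}}))$ with the skew exterior algebra $\Lambda=\kk\otimes_Q K^Q(\mathbf{f})$ as algebras and matches $\cls{z_j}$ with the generator $-e_j$; that gives generation of $\mathrm{H}_1$ directly (making your lifting argument redundant, though correct) and the regularity needed for your final paragraph. With this roof argument inserted in place of the appeal to balancing of $\operatorname{Tor}$, your proof becomes complete and essentially coincides with the paper's.
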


\begin{proof}
We denote the sequence $f_1,\ldots,f_c$ by $\mathbf{f}$ and the sequence $x_1,\ldots,x_n$ by $\mathbf{x}$. Let $\alpha$ be the
quasi-isomorphism $\alpha: K^Q(\mathbf{f})\rightarrow R$ and $\beta$ the quasi-isomorphism $\beta:K^Q(\mathbf{x})\rightarrow\kk$.
By Proposition \ref{prop:homPreservesQuism}, these maps induce quasi-isomorphisms
\[
\kk\otimes_Q K^Q(\mathbf{f})\xleftarrow{\beta\otimes K^Q(\mathbf{f})} K^Q(\mathbf{x})\otimes_Q K^Q(\mathbf{f})\xrightarrow {K^Q(\mathbf{x})\otimes\alpha}K^Q(\mathbf{x})\otimes_QR.
\]
We notice that $K^Q(\mathbf{x})\otimes_QR \cong K^R(\overline{\mathbf{x}})$, the skew Koszul complex of $\bar{x}_1,\ldots,\bar{x}_n$ over $R$, while
$\kk\otimes_Q K^Q(\mathbf{f})$ is a skew exterior algebra, which we denote by $\Lambda$. We fix the following notation:
\[
K^Q(\mathbf{f})=\Ay{Q}{e_1,\ldots,e_c\mid\partial (e_i)= f_i},\quad\mathrm{and}\quad K^Q(\mathbf{x})=\Ay{Q}{y_1,\ldots,y_n\mid \partial (y_i)=x_i}.
\]
The element $\sum_{i=1}^na_{i,j}y_i\otimes1-1\otimes e_j$ is mapped by $K^Q(\mathbf{x})\otimes\alpha$ to 
$\sum_{i,j}a_{i,j}y_i\otimes1$ which corresponds to the element 
$\sum_{i=1}^n\bar{a}_{i,j}y_i$ of the Koszul complex of $\bar{x}_1,\ldots,\bar{x}_n$ over $R$.
That same element is mapped by $\beta\otimes K^Q(\mathbf{f})$ to $-1\otimes e_j$ which 
corresponds to the element $-e_j$ thought as one of the variables generating $\Lambda$. This shows
that the homology of the skew Koszul complex of $\bar{x}_1,\ldots,\bar{x}_n$ is 
generated in cohomological degree 1 by the cycles $\sum_{i=1}^n\bar{a}_{i,j}y_i$ for $j=1,\ldots,c$,
proving the first part of the theorem. These cycles are also regular because they
correspond to the variables (with a negative sign) of $\Lambda$. By Theorem \ref{thm:RegEl} once these
cycles are killed we obtain a resolution of $\kk$, proving the last assertion
of the theorem.
\end{proof}

As a consequence of the proof, we obtain the following:
\begin{corollary}
If $R=Q/I$ is a skew complete intersection then its Koszul homology algebra is isomorphic to a skew exterior algebra.
\end{corollary}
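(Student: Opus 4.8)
The plan is to upgrade the chain of quasi-isomorphisms appearing in the proof of Theorem \ref{thm:ResQCI} to a chain of quasi-isomorphisms of color DG \emph{algebras}, and then simply pass to homology. Recall from that proof the two quasi-isomorphisms
\[
\Lambda = \kk\otimes_Q K^Q(\mathbf{f})\xleftarrow{\;\beta\otimes K^Q(\mathbf{f})\;} K^Q(\mathbf{x})\otimes_Q K^Q(\mathbf{f})\xra{\;K^Q(\mathbf{x})\otimes\alpha\;}K^Q(\mathbf{x})\otimes_Q R\cong K^R(\overline{\mathbf{x}}),
\]
where $\Lambda$ is the skew exterior algebra $\kk\otimes_Q K^Q(\mathbf{f})$, and the Koszul homology algebra under consideration is $H(K^R(\overline{\mathbf{x}}))$. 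In the proof of Theorem \ref{thm:ResQCI} these maps were used only as chain maps, in order to track representative cycles; here the point is that they also respect the multiplicative structure.

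First I would observe that every object in the diagram is a color DG algebra. The complexes $K^Q(\mathbf{x})$ and $K^Q(\mathbf{f})$ are color DG algebras by Proposition \ref{prop:colorDGAy}, and the tensor product of two color DG algebras over $Q$ again carries a color DG algebra structure, with the twisted multiplication $(a\otimes b)(a'\otimes b')=(-1)^{|b||a'|}\eps(b,a')(aa')\otimes(bb')$ exactly as for $H\otimes^\e H$ in the Hopf algebra section. Next I would check that the augmentations $\alpha:K^Q(\mathbf{f})\to R$ and $\beta:K^Q(\mathbf{x})\to\kk$ are morphisms of color DG algebras; since $\mathbf{f}$ and $\mathbf{x}$ are regular sequences of homogeneous normal elements, Theorem \ref{thm:RegEl} shows $K^Q(\mathbf{f})$ and $K^Q(\mathbf{x})$ resolve $R$ and $\kk$ respectively, so $\alpha$ and $\beta$ are quasi-isomorphisms. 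Tensoring a morphism of color DG algebras with the identity of another color DG algebra again yields such a morphism, so both horizontal arrows are morphisms of color DG algebras, and they are quasi-isomorphisms by Proposition \ref{prop:homPreservesQuism} (applied to the semi-free modules $K^Q(\mathbf{x})$ and $K^Q(\mathbf{f})$).

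Applying $H(-)$, each arrow induces an isomorphism of graded color algebras on homology. Because $\Lambda$ has zero differential, $H(\Lambda)=\Lambda$, so composing the two induced isomorphisms gives
\[
H(K^R(\overline{\mathbf{x}}))\cong H\bigl(K^Q(\mathbf{x})\otimes_Q K^Q(\mathbf{f})\bigr)\cong\Lambda
\]
as graded color algebras, which is exactly the assertion. I expect the main obstacle to be the bookkeeping required to confirm that the two arrows are genuinely morphisms of color DG \emph{algebras} rather than merely chain maps of color DG modules; concretely, one must verify that the twisted multiplication on $K^Q(\mathbf{x})\otimes_Q K^Q(\mathbf{f})$ is the correct color commutative one and that $\alpha$ and $\beta$ are multiplicative and preserve divided powers. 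Once the algebra structure is pinned down, the rest is formal: a quasi-isomorphism of DG algebras always induces an algebra isomorphism on homology, and $\Lambda$ coincides with its own homology.
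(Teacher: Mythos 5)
Your proof is correct and follows exactly the route the paper intends: the corollary is stated ``as a consequence of the proof'' of Theorem \ref{thm:ResQCI}, whose chain of quasi-isomorphisms $\Lambda \leftarrow K^Q(\mathbf{x})\otimes_Q K^Q(\mathbf{f}) \rightarrow K^R(\overline{\mathbf{x}})$ consists of morphisms of color DG algebras, so passing to homology (with $H(\Lambda)=\Lambda$ since its differential vanishes) gives the algebra isomorphism. Your write-up simply makes explicit the multiplicative bookkeeping that the paper leaves implicit.
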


\begin{remark}\label{rmk:BasisOfPiQCI}
Let $\theta_1,\ldots,\theta_n$ be a $\kk$-basis of $\pi^{1}(R)$ and $\theta_{n+1},\ldots,\theta_{n+c}$ be a $\kk$-basis for
$\pi^{2}(R)$. Then by Theorem \ref{thm:BasisOfPi}(1) and Theorem \ref{thm:ResQCI} these elements form a $\kk$-basis for the color Lie 
algebra $\pi(R)$. 
\end{remark}

With the notation from Section \ref{sec:pi}, we also have the following description of the Ext algebra of a skew complete intersection ring.

\begin{theorem}\label{thm:ExtPres}
If $R$ is a skew complete intersection then, as a graded color Hopf algebra
\[
\Ext_R(\kk,\kk)\cong \frac{T(\kk\theta_1 \oplus \cdots \oplus \kk\theta_{n+c})}{
\left(\begin{array}{ll}[\theta_l,\theta_i]+\sum_{j=1}^c\epsilon(\bar{a}_{l,i,j})\theta_{n+j}, & \mathrm{for}\;l<i\leq n\\
\theta_i^2+\sum_{j=1}^c\epsilon(\bar{a}_{i,i,j})\theta_{n+j}, & \mathrm{for}\;i\leq n \\
\left[\theta_l,\theta_i\right], & \mathrm{for}~l~\mathrm{or}~i > n \\
\end{array}\right)},
\]
where $[\theta_l,\theta_i]=\theta_l\theta_i-(-1)^{|\theta_l||\theta_i|}\e(\theta_l,\theta_i)\theta_i\theta_l$, with $|\theta_i|=1$ if $i\leq n$ and 2 otherwise,
and the Hopf structure on the right is obtained by identifying it with $U\pi(R)$.
\end{theorem}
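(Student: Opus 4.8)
The plan is to read the presentation off from the isomorphism $\Ext_R(\kk,\kk)\cong U\pi(R)$ of Theorem \ref{thm:Ext=Upi}, using that for a skew complete intersection the Lie algebra $\pi(R)$ and all of its operations are completely explicit. First I would apply Theorem \ref{thm:ResQCI} together with Remark \ref{rmk:BasisOfPiQCI}: since the acyclic closure of $\kk$ over $R$ is obtained by adjoining variables only in homological degrees one and two, the color Lie algebra $\pi(R)$ is concentrated in cohomological degrees one and two, with $\kk$-basis $\theta_1,\dots,\theta_n$ in degree one and $\theta_{n+1},\dots,\theta_{n+c}$ in degree two. In particular $\pi(R)$ is finite dimensional and $\kk\theta_1\oplus\cdots\oplus\kk\theta_{n+c}=\pi(R)$ as graded vector spaces, so that $T(\kk\theta_1\oplus\cdots\oplus\kk\theta_{n+c})=T(\pi(R))$.

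Next I would record every bracket and square among the basis elements. The relations $[\theta_l,\theta_i]=-\sum_{j=1}^c\epsilon(\bar a_{l,i,j})\theta_{n+j}$ for $l<i\le n$ and $\theta_i^{[2]}=-\sum_{j=1}^c\epsilon(\bar a_{i,i,j})\theta_{n+j}$ for $i\le n$ are exactly equation \eqref{eqn:Pi1}. Every remaining bracket, namely one in which at least one factor lies in $\{\theta_{n+1},\dots,\theta_{n+c}\}$, vanishes: by axiom (1) of Definition \ref{def:CLA} such a bracket lands in cohomological degree at least three, where $\pi(R)=0$. This is the source of the third family of relations in the statement.

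The heart of the argument is to pass from the defining presentation $U\pi(R)=T(\pi(R))/(\cdots)$, whose relations are indexed by \emph{all} elements of $\pi(R)$, to a finite presentation on the basis $\{\theta_i\}$. I would show that the defining two-sided ideal is already generated by its instances on basis elements: the commutation relations are bilinear and so reduce immediately, while for a trihomogeneous odd element $z=\sum_k c_k\theta_{i_k}$ (necessarily lying in a single homogeneous component, where all $\theta_{i_k}$ satisfy $\e(\theta_{i_k},\theta_{i_l})=1$ and $(-1)^{|\theta_{i_k}||\theta_{i_l}|}=-1$) the square relation $z\otimes z-z^{[2]}$ expands, via the polarization identities (6) and (7) of Definition \ref{def:CLA}, into a $\kk$-combination of the basis square relations and the basis commutation relations. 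Substituting the values computed above then produces precisely the displayed quotient. I expect this reduction to be the one genuinely delicate point; alternatively it can be bypassed by constructing the obvious surjection from the displayed algebra onto $U\pi(R)$ and checking bijectivity in each degree against the PBW basis of normal monomials furnished by Theorem \ref{thm:Ext=Upi}.

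Finally, the isomorphism $\iota':U\pi(R)\xra{\sim}\Ext_R(\kk,\kk)$ of Theorem \ref{thm:Ext=Upi} is one of associative algebras, and by the corollary following that theorem, together with the remark endowing $U\pi(R)$ with its color Hopf structure, it upgrades to an isomorphism of graded color Hopf algebras. Transporting the basis presentation of $U\pi(R)$ across $\iota'$, and noting that $[\theta_l,\theta_i]$ now denotes the color commutator $\theta_l\theta_i-(-1)^{|\theta_l||\theta_i|}\e(\theta_l,\theta_i)\theta_i\theta_l$ inside $\Ext_R(\kk,\kk)$, gives the result.
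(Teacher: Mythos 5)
Your proposal is correct and follows essentially the same route as the paper, whose entire proof is the one-line citation of Theorem \ref{thm:Ext=Upi}, formula \eqref{eqn:Pi1}, and Remark \ref{rmk:BasisOfPiQCI}; you have simply made explicit the details the paper leaves implicit (the vanishing of brackets involving $\pi^2(R)$ for degree reasons, and the reduction of the defining relations of $U\pi(R)$ to relations on the basis $\{\theta_i\}$ via the polarization identities). These fill-ins are sound, so no changes are needed.
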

\begin{proof}
It follows from Theorem \ref{thm:Ext=Upi}, formula \eqref{eqn:Pi1} and Remark \ref{rmk:BasisOfPiQCI}.
\end{proof}
Using Remark \ref{rem:leftExtIsRightExtOp} one sees that Theorem \ref{thm:ExtPres} generalizes \cite[Theorem 5.3]{BerOpp}.

If $P^R_\kk(t)$ denotes the (ungraded) Poincar\'{e} series of $\kk$ over $R$, i.e.\ the Hilbert series of $\Ext_R(\kk,\kk)$, then as a corollary of Theorem \ref{thm:ResQCI} and \cite[Theorem 7.1.3]{IFR} we deduce
\begin{corollary}\label{cor:PoincSeries}
If $R=Q/I$ is a skew complete intersection, with $Q$ skew polynomial ring in $n$ variables $x_1,\ldots,x_n$ and $I=(f_1,\ldots,f_c)\subseteq(x_1,\ldots,x_n)^2$, then
\[
P^R_\kk(t)=\frac{(1+t)^n}{(1-t^2)^c}.
\]
\end{corollary}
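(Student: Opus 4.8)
The plan is to identify $P^R_\kk(t)$ with the generating function, by homological degree, of the ranks of the acyclic closure $\Ay{R}{Y}$, and then to read off this generating function directly from the explicit description of $\Ay{R}{Y}$ furnished by Theorem \ref{thm:ResQCI}. Recall that $P^R_\kk(t) = \sum_{m \ge 0} \dim_\kk \Ext^m_R(\kk,\kk)\, t^m$. By the minimality theorem at the end of Section \ref{sec:AcyClos}, the acyclic closure $\Ay{R}{Y}$ is a \emph{minimal} free resolution of $\kk$ over $R$, so the differential induced on $\Hom_R(\Ay{R}{Y},\kk)$ vanishes. Consequently $\dim_\kk \Ext^m_R(\kk,\kk)$ equals the rank of the free $R$-module $\Ay{R}{Y}_m$, i.e., the number of normal monomials in the variables $Y$ of homological degree $m$, and so $P^R_\kk(t)$ is exactly the Hilbert series (with respect to the homological grading) of the underlying free $R$-module $\Ay{R}{Y}^\natural$.

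First I would invoke Theorem \ref{thm:ResQCI}: for a skew complete intersection $R = Q/I$ with $I = (f_1,\dots,f_c) \subseteq (x_1,\dots,x_n)^2$, the acyclic closure of $\kk$ over $R$ is $\Ay{R}{y_1,\dots,y_{n+c}}$, where $y_1,\dots,y_n$ have homological degree one (hence are exterior variables) and $y_{n+1},\dots,y_{n+c}$ have homological degree two (hence are divided powers variables). Summing the deviations $\varepsilon_{i,\sigma,j}(R)$ over all $\sigma \in G$ and $j \in \bbN$ thus gives $n$ when $i = 1$, $c$ when $i = 2$, and $0$ when $i \ge 3$. Next I would compute the homological Hilbert series of $\Ay{R}{Y}^\natural$ by factoring it over the variables, which is legitimate since the normal monomials form an $R$-basis (Constructions \ref{cons:adjVarOdd} and \ref{cons:adjVarEven}). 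Each exterior variable $y_i$ ($1 \le i \le n$) of homological degree one contributes the factor $(1+t)$, corresponding to its $\kk$-basis $\{1, y_i\}$, while each divided powers variable $y_{n+j}$ ($1 \le j \le c$) of homological degree two contributes $\sum_{k \ge 0} t^{2k} = (1-t^2)^{-1}$, corresponding to its $\kk$-basis $\{y_{n+j}^{(k)}\}_{k \ge 0}$. Multiplying these contributions yields
\[
P^R_\kk(t) = \frac{(1+t)^n}{(1-t^2)^c},
\]
which is precisely the specialization of \cite[Theorem 7.1.3]{IFR} to the deviations computed above.

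There is no genuine obstacle here; the result is a formal consequence of Theorem \ref{thm:ResQCI} together with minimality of the acyclic closure. The only point deserving care is the validity of the product formula for the Hilbert series in the present color commutative setting, but this is immediate: the underlying graded $R$-module of the acyclic closure is free with basis the normal monomials in $Y$, so the homological Hilbert series factors over the variables independently of the color and internal gradings, which play no role in counting ranks. This confirms the displayed formula.
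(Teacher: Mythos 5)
Your proof is correct and follows essentially the same route as the paper, which deduces the formula from Theorem \ref{thm:ResQCI} together with the deviation formula of \cite[Theorem 7.1.3]{IFR}. The only difference is that you unfold the content of that citation—minimality of the acyclic closure (the theorem at the end of Section \ref{sec:AcyClos}) plus the count of normal monomials, with $(1+t)$ per exterior variable and $(1-t^2)^{-1}$ per divided powers variable—which makes the argument self-contained in the color setting but is exactly the proof of the cited result.
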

The invariant defined below captures the growth of the minimal free resolution of $\kk$ over a connected graded $\kk$-algebra $\Omega$ and it is closely related to the Gelfand-Kirillov dimension of $\Ext_\Omega(\kk,\kk)$.
\begin{definition}
Let $\Omega$ be a connected graded $\kk$-algebra. The \emph{complexity} of $\kk$ over $\Omega$ is
\[
\mathrm{cx}_\Omega\;\kk=\mathrm{inf}\{d\in\mathbb{N}\mid \exists f(t)\in\mathbb{Z}[t],\;\mathrm{deg}\;f=d-1,\;\mathrm{dim}_\kk\Ext_\Omega^n(\kk,\kk)\leq f(n)\mathrm{\;for\;}n\geq1\}.
\]
\end{definition}

As a consequence of Corollary \ref{cor:PoincSeries} one has
\begin{corollary} \label{cor:cx}
If $R=Q/I$ is a skew complete intersection, with $Q$ a skew polynomial ring in $n$ variables $x_1,\ldots,x_n$ and $I=(f_1,\ldots,f_c)\subseteq(x_1,\ldots, x_n)^2$, then $\mathrm{cx}_R\;\kk=c$.
\end{corollary}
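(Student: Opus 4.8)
The plan is to extract the polynomial growth rate of the sequence $\dim_\kk\Ext^n_R(\kk,\kk)$ directly from the rational expression for the Poincar\'e series furnished by Corollary \ref{cor:PoincSeries}. First I would simplify, using $1-t^2=(1-t)(1+t)$:
\[
P^R_\kk(t)=\frac{(1+t)^n}{(1-t^2)^c}=\frac{(1+t)^n}{(1-t)^c(1+t)^c}=\frac{(1+t)^{n-c}}{(1-t)^c}.
\]
Here $n\geq c$, since the Koszul complex $K^Q(f_1,\dots,f_c)$ resolves $R$ over $Q$ (the corollary following Theorem \ref{thm:RegEl}), so $\mathrm{pd}_Q R=c\leq\mathrm{gldim}\,Q=n$. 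Thus the numerator $h(t):=(1+t)^{n-c}$ is a genuine polynomial, and crucially $h(1)=2^{n-c}\neq 0$.

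Next I would invoke the standard fact from the theory of rational generating functions: if $F(t)=h(t)/(1-t)^c$ with $h$ a polynomial and $h(1)\neq 0$, then for all large $m$ the coefficient $[t^m]F(t)$ agrees with a polynomial in $m$ of degree exactly $c-1$ whose leading coefficient is $h(1)/(c-1)!$. Applying this, $\dim_\kk\Ext^m_R(\kk,\kk)$ coincides for large $m$ with a polynomial of degree $c-1$ in $m$ with \emph{positive} leading coefficient $2^{n-c}/(c-1)!$. One may check this by hand if desired, expanding $h(t)/(1-t)^c=\sum_k\binom{n-c}{k}t^k/(1-t)^c$ and using $[t^m]\,t^k/(1-t)^c=\binom{m-k+c-1}{c-1}$, a sum of $n-c+1$ polynomials in $m$ each of degree $c-1$ and positive leading term, so no cancellation of the top degree occurs.

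Finally I would translate this asymptotic into the definition of complexity. For the upper bound, since the dimensions agree with a degree-$(c-1)$ polynomial for large $m$, enlarging the constant term to absorb the finitely many small values produces a polynomial $f$ of degree $c-1$ with $\dim_\kk\Ext^m_R(\kk,\kk)\leq f(m)$ for all $m\geq 1$, giving $\mathrm{cx}_R\,\kk\leq c$. For the lower bound, the positivity of the leading coefficient forces the dimensions to grow like $m^{c-1}$, so they cannot be bounded above by any polynomial of degree less than $c-1$; hence $\mathrm{cx}_R\,\kk\geq c$. Combining the two inequalities yields $\mathrm{cx}_R\,\kk=c$. The main obstacle is the lower bound, which rests entirely on the non-vanishing $h(1)=2^{n-c}\neq 0$ (equivalently, on $n\geq c$): this is what guarantees the growth is genuinely of degree $c-1$ rather than something smaller, and it is the one point I would take care to justify.
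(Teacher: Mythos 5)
Your proof is correct and takes essentially the same route the paper intends: the paper presents Corollary \ref{cor:cx} as an immediate consequence of the Poincar\'e series in Corollary \ref{cor:PoincSeries}, which is exactly what you do by rewriting it as $(1+t)^{n-c}/(1-t)^c$ and reading off that the coefficients grow as a polynomial of degree exactly $c-1$ with positive leading coefficient $2^{n-c}/(c-1)!$. Your verification that $n\geq c$ (via $\mathrm{pd}_Q R = c$ from the minimal Koszul resolution and $\operatorname{gldim} Q = n$), ensuring the numerator does not vanish at $t=1$, is a worthwhile detail that the paper leaves implicit.
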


The next corollary answers \cite[Question 6.5]{NnCommCI} for skew complete intersections:
\begin{corollary}
If $R$ is a skew complete intersection then $\Ext_R(\kk,\kk)$ is a noetherian algebra.
\end{corollary}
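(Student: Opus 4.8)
The plan is to read off the ring structure of $\Ext_R(\kk,\kk)$ from the presentation in Theorem \ref{thm:ExtPres} and exhibit it as a finite module over a noetherian subalgebra. Write $A = \Ext_R(\kk,\kk) \cong U\pi(R)$, with generators $\theta_1,\dots,\theta_{n+c}$, where $|\theta_i| = 1$ for $i \le n$ and $|\theta_{n+j}| = 2$ for $1 \le j \le c$, and let $Z$ be the $\kk$-subalgebra generated by the even variables $\theta_{n+1},\dots,\theta_{n+c}$.

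First I would identify $Z$. The relations $[\theta_l,\theta_i] = 0$ whenever $l$ or $i$ exceeds $n$ say precisely that each $\theta_{n+j}$ color-commutes with every generator; since $|\theta_{n+j}|$ is even the sign disappears and one gets $\theta_{n+j}\theta_i = \e(\theta_{n+j},\theta_i)\theta_i\theta_{n+j}$ for all $i$. Multiplicativity of $\e$ then shows that each $\theta_{n+j}$ is normal in $A$, and that $Z = \kk_{\q'}[\theta_{n+1},\dots,\theta_{n+c}]$ is a skew polynomial ring on $c$ variables (with $q'_{jk} = \e(\theta_{n+j},\theta_{n+k})$). In particular $Z$ is an iterated Ore extension of $\kk$, hence left and right noetherian by the Hilbert basis theorem.

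Next I would use the Poincar\'e--Birkhoff--Witt basis from Theorem \ref{thm:Ext=Upi}. The normal monomials on $\Theta$ form a $\kk$-basis of $A$; since $\theta_1,\dots,\theta_n$ are odd they appear with exponent $0$ or $1$, so every normal monomial factors uniquely as $z\cdot m$ with $z \in Z$ and $m$ a square-free monomial in $\theta_1,\dots,\theta_n$ (the even variables carry the highest indices, hence sit on the left in a normal monomial). Thus $A = \bigoplus_{m} Z m$ is a free left $Z$-module on the finite set of $2^n$ square-free monomials $m$ in the odd variables, so $A$ is module-finite over $Z$. Because each $\theta_{n+j}$ is normal, each such $m$ normalizes $Z$ (that is, $mZ = Zm$), which exhibits $Z \subseteq A$ as a finite normalizing extension.

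Finally I would invoke the theorem that a finite normalizing extension of a left (resp.\ right) noetherian ring is again left (resp.\ right) noetherian (Formanek--Jategaonkar). Applied to $Z \subseteq A$ this gives that $A = \Ext_R(\kk,\kk)$ is noetherian, completing the proof. The only content beyond bookkeeping is verifying that the even generators are genuinely normal in all of $A$ and that the PBW basis splits as a free $Z$-module on finitely many normalizing generators; once these are established the normalizing-extension theorem does the rest, so I expect no serious obstacle.
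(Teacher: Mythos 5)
Your proof is correct and takes essentially the same approach as the paper's: both pass to the subalgebra generated by $\theta_{n+1},\dots,\theta_{n+c}$, identify it as a noetherian skew polynomial ring, and conclude from the fact that $\Ext_R(\kk,\kk)$ is module-finite over it. The only difference is that you make explicit the details the paper treats as clear (normality of the even generators, the PBW free-module decomposition, and the finite normalizing extension theorem).
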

\begin{proof}
A presentation of $\Ext_R(\kk,\kk)$ is given by Theorem \ref{thm:ExtPres}. It is clear from that presentation that this algebra is finitely generated over the subring generated by $\theta_{n+1},\ldots,\theta_{n+c}$, which is a skew polynomial ring and hence noetherian. The result now follows.
\end{proof}
\begin{definition}\cite{CasShel}
Let $A$ be a $\kk$-algebra which is finitely generated in degree 1. One says that $A$ is a $\mathcal{K}_2$-algebra if $\Ext_A(\kk,\kk)$ is generated as an algebra by $\Ext_A^1(\kk,\kk)$ and $\Ext_A^2(\kk,\kk)$.
\end{definition}
The notion of a $\mathcal{K}_2$-algebra is a generalization of the notion of 
Koszul algebra which has been studied in the literature. As a consequence of 
Theorem \ref{thm:ExtPres} we deduce the following result generalizing
\cite[Corollary 9.2]{CasShel}.
\begin{corollary}
If $R$ is a skew complete intersection generated in degree one then $R$ is a $\mathcal{K}_2$-algebra.
\end{corollary}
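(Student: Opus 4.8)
The plan is to read off the result directly from the presentation of $\Ext_R(\kk,\kk)$ established in Theorem \ref{thm:ExtPres}. Recall that, by definition, $R$ being a $\mathcal{K}_2$-algebra requires two things: that $R$ be finitely generated in internal degree one, and that the graded algebra $\Ext_R(\kk,\kk)$ be generated as an algebra by its components $\Ext_R^1(\kk,\kk)$ and $\Ext_R^2(\kk,\kk)$ in cohomological degrees one and two. The first condition is exactly the hypothesis of the corollary, so the only work is to verify the second.

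First I would invoke Theorem \ref{thm:ExtPres}, which exhibits $\Ext_R(\kk,\kk)$ as a quotient of the tensor algebra $T(\kk\theta_1 \oplus \cdots \oplus \kk\theta_{n+c})$. In particular, $\Ext_R(\kk,\kk)$ is generated as a $\kk$-algebra by the classes $\theta_1,\dots,\theta_{n+c}$. It then remains only to locate these generators in the correct cohomological degrees. As recorded in the statement of Theorem \ref{thm:ExtPres}, one has $|\theta_i| = 1$ for $i \leq n$ and $|\theta_{n+j}| = 2$ for $j = 1,\dots,c$; equivalently, this follows from Theorem \ref{thm:ResQCI}, since the acyclic closure $\Ay{R}{Y}$ of $\kk$ over $R$ is obtained by adjoining the variables $y_1,\dots,y_n$ in homological degree one and $y_{n+1},\dots,y_{n+c}$ in homological degree two, whence by Theorem \ref{thm:BasisOfPi} the color Lie algebra $\pi(R)$ is concentrated in cohomological degrees one and two.

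Putting these together, the generators $\theta_1,\dots,\theta_n$ lie in $\Ext_R^1(\kk,\kk)$ and $\theta_{n+1},\dots,\theta_{n+c}$ lie in $\Ext_R^2(\kk,\kk)$, so that $\Ext_R(\kk,\kk)$ is generated as an algebra by $\Ext_R^1(\kk,\kk)$ and $\Ext_R^2(\kk,\kk)$. Combined with the degree-one generation hypothesis, this shows $R$ is a $\mathcal{K}_2$-algebra. There is no serious obstacle here; the content of the statement is entirely carried by Theorems \ref{thm:ResQCI} and \ref{thm:ExtPres}, and the only point requiring care is the bookkeeping that identifies the cohomological degree of each algebra generator of $\Ext_R(\kk,\kk)$, confirming in particular that no generators are forced into cohomological degree three or higher.
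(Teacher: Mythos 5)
Your proposal is correct and follows exactly the paper's route: the paper likewise deduces the corollary directly from the presentation in Theorem \ref{thm:ExtPres}, whose generators $\theta_1,\dots,\theta_n$ and $\theta_{n+1},\dots,\theta_{n+c}$ lie in cohomological degrees one and two respectively. Your additional bookkeeping via Theorems \ref{thm:ResQCI} and \ref{thm:BasisOfPi} just makes explicit what the paper leaves implicit.
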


\bibliographystyle{amsplain}

\bibliography{biblio}

\end{document}